\documentclass[a4paper]{article}
\usepackage{amsmath}
\usepackage{amssymb}
\usepackage{amsfonts}

\setcounter{MaxMatrixCols}{10}

\newtheorem{theorem}{Theorem}[section]

\newtheorem{corollary}[theorem]{Corollary}

\newtheorem{example}[theorem]{Example}

\newtheorem{lemma}[theorem]{Lemma}

\newtheorem{proposition}[theorem]{Proposition}
\newtheorem{remark}[theorem]{Remark}

\newenvironment{proof}[1][Proof]{\noindent\textbf{#1.} }{\ \rule{0.5em}{0.5em}}
\headheight=0pt \headsep=-1.0in \topmargin=2.5cm \hoffset=0.55cm
\oddsidemargin=-0.1cm \textheight=24.7cm \textwidth=15.5cm
\footskip=25pt
\input{tcilatex}
\begin{document}

\title{On Artinian Rings with Restricted Class of Injectivity Domains}
\author{\textsf{P\i nar Aydo\u{g}du}\thanks{%
Corresponding author : paydogdu@hacettepe.edu.tr (P. Aydo\u{g}du)}\qquad 
\textsf{and\qquad B\"{u}lent Sara\c{c}} \\
{\small Department of Mathematics, Hacettepe University, Beytepe 06800
Ankara / Turkey}\textsf{\ }}
\date{}
\maketitle

\begin{abstract}
In a recent paper of Alahmadi, Alkan and L\'{o}pez--Permouth, a ring $R$ is
defined to have no (simple) middle class if the injectivity domain of any
(simple) $R$--module is the smallest or largest possible. Er, L\'{o}%
pez--Permouth and S\"{o}kmez use this idea of restricting the class of
injectivity domains to classify rings, and give a partial characterization
of rings with no middle class. In this work, we continue the study of the
property of having no (simple) middle class. We give a structural
description of right Artinian right nonsingular rings with no right middle
class. We also give a characterization of right Artinian rings that are not $%
SI$ to have no middle class, which gives rise to a full characterization of
rings with no middle class. Furthermore, we show that commutative rings with
no middle class are those Artinian rings which decompose into a sum of a
semisimple ring and a ring of composition length two. Also, Artinian rings
with no simple middle class are characterized. We demonstrate our results
with several examples.
\end{abstract}

\section{\protect\bigskip Introduction}

\hspace{0.5cm}Throughout this paper, our rings will be associative rings
with identity, and modules will be unital right modules, unless otherwise
stated. For any ring $R,\ \func{Mod}$--$R$ will denote the category of all
right $R$--modules$.\medskip $

Let $R$ be a ring. Recall that an $R$--module $M$ is\emph{\ injective
relative to an }$R$\emph{--module }$N$ (or, $M$ \emph{is }$N$\emph{%
--injective}) if, for any submodule $K$ of $N,$ any $R$ --homomorphism $f:$ $%
K\rightarrow M$ extends to some member of $Hom_{R}(N,M).$ It is evident that
every module is injective relative to semisimple modules. Thus, for any $R$%
--module $M,$ \emph{the injectivity domain }$\mathfrak{In}^{-1}(M)=\{N\in 
\func{Mod}$--$R:M$ is $N$--injective$\}$ of $M$ contains all semisimple
right $R$--modules. In \cite{AAL}, Alahmadi, Alkan and L\'{o}pez--Permouth
initiated the study of \emph{poor modules,} namely modules whose injectivity
domains consist only of semisimple modules in $\func{Mod}$--$R.$ They
consider rings over which every right module is either injective or poor,
and refer such rings as \emph{having no right middle class.} The study of
rings with no middle class has a growing interest in recent years (see \cite%
{AAL}, \cite{Er}, \cite{Nil}, and \cite{lopez-simental}).\emph{\medskip }

In \cite{Er}, Er, L\'{o}pez--Permouth and S\"{o}kmez studied the rings with
no right middle class and gave a partial characterization of such rings. The
following two theorems summarize the results on rings with no right middle
class obtained in \cite{Er}. To simplify the statements, we assume that the
ring $R$ is not semisimple Artinian. All statements can be made to fit that
possibility by setting $T=0.\medskip $

\noindent \textbf{Theorem 1 }Let $R$ be a right $SI$--ring. Then $R$ has no
right middle class if and only if $R\cong S\oplus T,$ where $S$ is
semisimple Artinian and $T$ is either

$(i)$ Morita equivalent to a right $PCI$--domain or

$(ii)$ an indecomposable ring with homogenenous essential right socle
satifying one of the following equivalent conditions (where $Q$ is the
maximal right quotient ring of $T$):

\qquad $(a)$ Non-semisimple quasi--injective right $T$--modules are
injective.

$\qquad (b)$ Proper essential submodules of $Q_{T}$ are poor.

\qquad $(c)$ For any submodule $A$ of $Q_{T}$ containing $Soc(T_{T})$
properly, $QA=Q.$

Those rings of type $(ii)$ are either right Artinian or right $V$--rings and
have a unique simple singular right $T$--module up to isomorphism.\medskip

\noindent \textbf{Theorem 2} Let $R$ be a ring with no right middle class
which is not right $SI$. Then $R\cong S\oplus T,$ where $S$ is semisimple
Artinian and $T$ is an indecomposable right Artinian ring satisfying the
following conditions:

$(i)$ $soc(T_{T})=Z(T_{T})=J(T),$

$(ii)$ $T$ has homogeneous right socle, and

$(iii)$ there is a unique non--injective simple right $T$--module up to
isomorphism.

In this case $T$ is either a $QF$--ring with $J(T)^{2}=0,$ or poor as a
right module. Conversely, if $T$ is a $QF$--ring with homogeneous right
socle and $J(T)^{2}=0,$ then $T$ has no right middle class.\medskip

Note that the authors of \cite{Er} could not reverse this implication to
show that the conditions $(i)$--$(iii)$ in Theorem 2 above are sufficient as
well as necessary. As a matter of fact, we show in our work that there exist
rings satisfying conditions $(i)$--$(iii)$ in Theorem 2 above which are poor
as a right module over itself and do have right middle class (see Examples %
\ref{sey1} and \ref{sey2}). We also give a complete characterization of
non--SI--rings with no right middle class (see Theorem \ref{non-SI}).

A characterization of right Artinian rings with no right middle class was
given in Corollary 3.2 of \cite{lopez-simental}. Using that result in
conjunction with those of \cite{Er}, we have the following two complete
characterizations:\medskip

\noindent \textbf{Theorem 3} Let $R$ be any ring. Then $R$ has no right
middle class if and only if $R\cong S\oplus T,$ where $S$ is semisimple
Artinian and $T$ satisfies one of the following conditions:

$(i)$ $T$ is Morita equivalent to a right $PCI$--domain, or

$(ii)$ $T$ is a right $SI$ right $V$--ring with the following properties:

\qquad $(a)$ $T$ has essential homogeneous right socle and

\qquad $(b)$ for any submodule $A$ of $Q_{T}$ which does not contain the
right socle of $T$ properly, $QA=Q,$ where $Q$ is the maximal right quotient
ring of $T,$ or

$(iii)$ $T$ is a right Artinian ring whose Jacobson radical properly
contains no nonzero ideals.\medskip

\noindent \textbf{Theorem 4} Let $R$ be any ring. Then $R$ has no right
middle class if and only if $R\cong S\oplus T,$ where $S$ is semisimple
Artinian and $T$ satisfies one of the following conditions:

$(i)$ $T$ is Morita equivalent to a right $PCI$--domain, or

$(ii)$ $(a)$ $T$ is a right Artinian ring or a right $SI$ right $V$--ring
with homogeneous essential right socle, and

\qquad $(b)$ every nonsemisimple quasi--injective right $T$--module is
injective.\medskip

In the process of studying these rings, various necessary or sufficient
conditions are presented in \cite{Er}. For instance,

\textbf{(P1) }$R$ has homogeneous right socle, and\smallskip

\textbf{(P2) }there is a unique simple singular right $R$--module up to
isomorphism\newline
are necessary conditions for a nonsemisimple indecomposable right $SI$--ring 
$R$ to have no right middle class (see \cite[Theorem 2]{Er}). Likewise, in 
\cite[Proposition 6]{Er}, it is shown that right Artinian right $SI$--rings
with homogeneous right socle anda unique local module of length two up tp
isomorphism must have no right middle class. We show here that \textbf{(P1)}
and \textbf{(P2)} are not sufficient while the condition that the ring has a
unique local module of length two up to isomorphism is not necessary (see
Examples \ref{ex2}(i) and \ref{example_local module of length 2}). \medskip

It is shown, in \cite[Corollary 5]{Er}, that if $R$ is an indecomposable
right nonsingular right Artinian ring with no right middle class, then $R$
is isomorphic to a formal triangular matrix ring of the form $\left( 
\begin{array}{cc}
S & 0 \\ 
A & S^{\prime }%
\end{array}%
\right) ,$ where $S$ and $S^{\prime }$ are simple Artinian rings and $A$ is
an $S^{\prime }$--$S$--bimodule. Using the theory of Morita equivalences, we
see that such rings simplifies to formal triangular matrix rings of the form 
$\left( 
\begin{array}{cc}
D & 0 \\ 
\mathbb{M}_{n\times 1}(D) & D^{\prime }%
\end{array}%
\right) ,$ where $D$ is a division ring and $D^{\prime }$ is a division
subring of $\mathbb{M}_{n}(D)$ for some positive integer $n$ (Theorem \ref%
{thmf}). We also prove that certain conditions on $D^{\prime }$
characterizes these triangular rings to have no right middle class which
yields a general characterization for right nonsingular right Artinian rings
to have no right middle class (Theorem \ref{thm1}). This result also enables
us to produce many interesting examples of right nonsingular right Artinian
rings with no right middle class.\medskip

It is also known from \cite[Corollary 6]{Er} that if $R$ is an
indecomposable right Artinian ring with no right middle class which is not
right $SI$, then $R$ is isomorphic to a formal triangular matrix ring of the
form $\left( 
\begin{array}{cc}
\mathbb{M}_{n}(A) & 0 \\ 
X & B%
\end{array}%
\right) ,$ where $A$ is a (nonsemisimple) local right Artinian ring, $B$ is
a semisimple Artinian ring, and $X$ is a $B$--$\mathbb{M}_{n}(A)$--bimodule.
As a matter of fact, a right Artinian ring which is not right $SI$ has no
right middle class if and only if $R\cong S\oplus \mathbb{M}_{n}(A)$ where $%
S $ is semisimple Artinian and $A$ is a local right Artinian ring whose
Jacobson radical properly contains no nonzero ideals (Theorem \ref{non-SI}).
\medskip

In Section 3, we restrict our attention to only simple modules, and consider
rings whose simple right modules are either injective or poor. Such rings
are said to have no simple middle class (see \cite{AAL}). We give necessary
and sufficient conditions for a right Artinian ring to have no simple middle
class.\medskip

The last section of our paper is concerned with the property of having no
(simple) middle class in the commutative setting. We give a complete
description of commutative rings with no middle class. In particular, we see
that a commutative ring with no middle class is Artinian. We conclude our
work with a characterization of commutative Noetherian rings to have no
simple middle class.\medskip

Recall that a ring is said to be a right $V$--ring if every simple right
module is injective. As a generalization of right $V$--rings, right $GV$%
--rings were introduced by Ramamurthi and Rangaswamy in \cite{GV-rings}. A
ring is called right $GV$ if every simple singular module is injective, or
equivalently, every simple module is either injective or projective. We call
a ring right $SI$ if every singular right module is injective (see\cite%
{Extending Modules}). Note that semilocal right $GV$--rings are right $SI.$%
\medskip

If $M$ is an $R$ --module, then $E(M),$ $J(M),$ $Z(M)$ and $Soc(M)$ will
respectively denote the injective hull, Jacobson radical, the singular
submodule and the socle of $M.$ We will use the notations $\leq $ and $\leq
_{e}$ in order to indicate submodules and essential submodules,
respectively. For a module with a composition series, $cl(M)$ stands for the
composition length of $M.$ The ring of $n\times n$ matrices over a ring $R$
will be denoted by $\mathbb{M}_{n}(R).$ The notation $A[i,j]$ will be used
to indicate the $(i,j)$--th entry of a matrix $A.$ We use $e_{ij}$ to
designate the standard matrix unit of $\mathbb{M}_{n}(R)$ with $1$ in the $%
(i,j)$--th entry and zeros elsewhere. For any unexplained terminology, we
refer the reader to \cite{Facchini} and \cite{Lam}.

\section{Artinian Rings with No Middle Class}

\hspace{0.5cm}In \cite[Corollary 5]{Er}, Er, L\'{o}pez--Permouth, and S\"{o}%
kmez proved that if $R$ is an indecomposable right nonsingular right
Artinian ring with no right middle class, then $R$ is isomorphic to a formal
triangular matrix ring of the form $\left( 
\begin{array}{cc}
S & 0 \\ 
A & S^{\prime }%
\end{array}%
\right) ,$ where $S$ and $S^{\prime }$ are simple Artinian rings and $A$ is
an $S^{\prime }$--$S$--bimodule. With the following theorem, we see that to
determine when such rings have no right middle class, it is enough to
consider formal triangular matrix rings of the much simpler form $\left( 
\begin{array}{cc}
D & 0 \\ 
\mathbb{M}_{n\times 1}(D) & D^{\prime }%
\end{array}%
\right) ,$ where $D$ is a division ring and $D^{\prime }$ is a division
subring of $\mathbb{M}_{n}(D)$ for some positive integer $n$

\begin{theorem}
\label{thmf}If $R$ is a right Artinian right $SI$ ring satisfying the
properties $(P1)$ and $(P2)$, then it is Morita equivalent to a formal
triangular matrix ring of the form 
\begin{equation*}
\left( 
\begin{array}{cc}
D & 0 \\ 
\mathbb{M}_{n\times 1}(D) & D^{\prime }%
\end{array}%
\right) ,
\end{equation*}%
where $D$ is a division ring and $D^{\prime }$ is a division subring of $%
\mathbb{M}_{n}(D)$.
\end{theorem}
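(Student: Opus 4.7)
The plan is a two-step reduction: first apply \cite[Corollary 5]{Er} to put $R$ in triangular form with simple Artinian corners, then perform a Morita reduction via a carefully chosen idempotent to shrink each corner to a division ring.

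For the first step, one first reduces to the indecomposable non-semisimple case. Any nontrivial ring decomposition $R = R_{1} \oplus R_{2}$ of the right Artinian ring $R$ yields nonzero socles in each summand, producing simple $R$-modules with distinct annihilators and thereby breaking the homogeneity demanded by (P1); so $R$ is indecomposable. (The purely semisimple case is degenerate and is excluded from the intended scope of the statement.) Being right $SI$, $R$ is right nonsingular, so \cite[Corollary 5]{Er}---whose proof in fact uses only (P1), (P2), right $SI$ and right Artinian---gives an isomorphism
\begin{equation*}
R \;\cong\; \begin{pmatrix} S & 0 \\ A & S' \end{pmatrix}
\end{equation*}
with $S$ and $S'$ simple Artinian rings and $A$ a nonzero $S'$--$S$--bimodule.

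Next, by Wedderburn--Artin, write $S \cong \mathbb{M}_{p}(D)$ and $S' \cong \mathbb{M}_{q}(D')$ for division rings $D$, $D'$ and positive integers $p$, $q$. Let $e_{11} \in S$ and $f_{11} \in S'$ be the standard $(1,1)$ matrix units, and take $e = e_{11} + f_{11}$ as the block-diagonal idempotent of $R$. A direct block computation gives $eRe \cong \left(\begin{smallmatrix} D & 0 \\ f_{11}Ae_{11} & D' \end{smallmatrix}\right)$; to upgrade this isomorphism to a Morita equivalence between $R$ and $eRe$, one checks that $e$ is full, i.e.\ $ReR = R$. The $(1,1)$ and $(2,2)$ blocks of $ReR$ exhaust $S$ and $S'$ by simplicity. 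For the $(2,1)$ block, write $A$ as a direct sum of copies of the unique simple right $S$-module $V$; since $Ve_{11} \neq 0$ and $V$ is simple, $(Ve_{11})S = V$, whence $Ae_{11}S = A$.

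Finally, set $B := f_{11}Ae_{11}$, a $D'$--$D$--bimodule. Artinianness of $R$ forces $B$ to be finite-dimensional as a right $D$-space, say of dimension $n$, so $B \cong \mathbb{M}_{n\times 1}(D)$ as right $D$-modules. The commuting left $D'$-action then induces a ring homomorphism $D' \to \operatorname{End}_{D}(B) \cong \mathbb{M}_{n}(D)$, which is injective because $D'$ is a division ring and $A \neq 0$ guarantees a nonzero action on $B$. This realises $D'$ as a division subring of $\mathbb{M}_{n}(D)$, as required. The main obstacle lies in the first paragraph: one must confirm that the proof of \cite[Corollary 5]{Er}---phrased there under the stronger hypothesis of no right middle class---actually goes through on the weaker bundle (P1), (P2), right $SI$, right Artinian in use here, and tidy up the indecomposability reduction. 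The Morita collapse itself is then a routine matrix manipulation.
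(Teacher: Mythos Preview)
Your argument is correct in outline but follows a genuinely different route from the paper's. You first quote \cite[Corollary~5]{Er} to put $R$ into the big triangular form $\left(\begin{smallmatrix} S & 0 \\ A & S' \end{smallmatrix}\right)$ with simple Artinian corners, and only then Morita-collapse via a matrix-unit idempotent $e_{11}+f_{11}$. The paper never passes through that intermediate form: it works directly from a primitive idempotent decomposition $R=e_{1}R\oplus\cdots\oplus e_{l}R\oplus f_{1}R\oplus\cdots\oplus f_{m}R$ (the $e_iR$ simple, the $f_jR$ nonsimple local), sets $e=e_{1}+f_{1}$, and proves fullness of $e$ by an elementary argument using (P1), (P2) and $e_iRf_j=0$. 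The embedding $D'\hookrightarrow\mathbb{M}_n(D)$ is then obtained concretely by restricting endomorphisms of $f_1R$ to the socle $\operatorname{Soc}(f_1R)\cong (e_1R)^n$, with injectivity coming from nonsingularity of $f_1R$; your bimodule-action description is an equivalent but more abstract packaging of the same map.

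The practical payoff is precisely the obstacle you flag at the end: your route is conditional on checking that the \emph{proof} of \cite[Corollary~5]{Er} runs on the weaker hypotheses (P1), (P2), right $SI$, right Artinian rather than ``no right middle class,'' a verification you leave open. The paper's route sidesteps this entirely---it cites \cite[Corollary~5]{Er} only in passing for the easy fact that $\operatorname{End}(f_1R)$ is a division ring, and is otherwise self-contained. So your approach is conceptually tidier once the citation is secured, while the paper's is unconditional from the outset.
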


\begin{proof}
Since $R$ is a right Artinian ring, there exists a complete set $%
\{e_{1},\ldots ,e_{l},f_{1},\ldots ,f_{m}\}$ of local orthogonal idempotents
such that the $e_{i}R$'s are simple and $f_{i}R$'s are nonsemisimple local.
Then $R=e_{1}R\oplus \cdots \oplus e_{l}R\oplus f_{1}R\oplus \cdots \oplus
f_{m}R$. By $(P2)$, $f_{i}R\cong f_{j}R$ for all $i,j$.\ Moreover, since $%
e_{i}R$ is nonsingular, there is no nonzero $R$--homomorphism from $f_{j}R$
into $e_{i}R,$ and so $e_{i}Rf_{j}=0$ for all $i,j$. As $R$ is right $SI$, $%
R/Soc(R_{R})$ is semisimple Thus, $J(R)\leq Soc(R_{R})$. This gives that $%
J(f_{i}R)=Soc(f_{i}R)$. We set $e=e_{1}+f_{1}$. Then $eR=e_{1}R\oplus f_{1}R$%
. We shall prove that $e$ is a full idempotent of $R$, i.e., $ReR=R$. It is
clear that $e_{1}$, $f_{1}\in ReR$. Since $R$ has homogeneous right socle, $%
e_{i}Re_{1}R\neq 0$ for all $i=1,\ldots ,l$. Then $e_{i}\in
e_{i}R=e_{i}Re_{1}R\leq ReR$ for all $i=1,\ldots ,l$. Now, assume that $%
f_{k}\notin f_{k}Rf_{1}R$. Then $f_{k}Rf_{1}R\leq Soc(f_{k}R)$. Since $R$
has homogeneous right socle and $e_{i}Rf_{j}=0$ for all $i,j$, $%
f_{k}Rf_{1}Rf_{1}=0$. Let $u:f_{1}R\rightarrow f_{k}R$ be an isomorphism of
right $R$--modules. Then $u(f_{1}Rf_{1}Rf_{1})=u(f_{1})Rf_{1}Rf_{1}\leq
f_{k}Rf_{1}Rf_{1}=0$, and so $f_{1}\in f_{1}Rf_{1}Rf_{1}=0$, a
contradiction. It follows that $f_{k}\in f_{k}Rf_{1}R\leq ReR$ for all $%
k=1,\ldots ,m$. Therefore, $ReR=R$.

Now let $\alpha $ be an endomorphism on $E(f_{1}R)$, and let $\alpha
^{\prime }$ be the restriction of $\alpha $ to $Soc(f_{1}R)$. Suppose $%
Soc(f_{1}R)=B_{1}\oplus \cdots \oplus B_{n}$, where $B_{1},\ldots ,B_{n}$
are simple right $R$--modules isomorphic to $e_{1}R$, and let $%
v_{k}:B_{k}\rightarrow e_{1}R$ be an isomorphism. Set $\alpha
_{k}=i_{k}v_{k}^{-1}$ and $\beta _{k}=v_{k}\pi _{k}$, where $i_{k}$ is the
natural embedding of $B_{k}$ into $Soc(f_{1}R),$ and $\pi _{k}$ the natural
projection of $Soc(f_{1}R)$ onto $B_{k}$. Then the correspondence%
\begin{equation*}
\alpha \overset{\varphi }{\longleftrightarrow }(\beta _{i}\alpha ^{\prime
}\alpha _{j})_{n\times n}
\end{equation*}%
between $End(E(f_{1}R))$ and $\mathbb{M}_{n}(End(e_{1}R))$ gives a ring
isomorphism. Moreover, since $E(f_{1}R)$ is nonsingular and $%
E(f_{1}R)/Soc(f_{1}R)$ is singular, this correspondence yields an embedding
of $End(f_{1}R)$ into $\mathbb{M}_{n}(End(e_{1}R))$. We denote $\varphi
(End(f_{1}R))$ by $D^{\prime }$. Note that $D^{\prime }$ is a division ring
in view of the proof of \cite[Corollary 5]{Er}. Now it is routine to check
that the mapping%
\begin{equation*}
\begin{tabular}{rll}
$eRe\cong \left( 
\begin{array}{cc}
End(e_{1}R) & 0 \\ 
Hom(e_{1}R,f_{1}R) & End(f_{1}R)%
\end{array}%
\right) $ & $\longrightarrow $ & $\left( 
\begin{array}{cc}
D & 0 \\ 
\mathbb{M}_{n\times 1}(D) & D^{\prime }%
\end{array}%
\right) ,\bigskip $ \\ 
$\left( 
\begin{array}{rr}
u_{1} & 0 \\ 
u_{2} & u_{3}%
\end{array}%
\right) $ & $\longmapsto $ & $\left( 
\begin{array}{cc}
u_{1} & 0 \\ 
(\beta _{k}u_{2})_{n\times 1} & \varphi (u_{3})%
\end{array}%
\right) ,$%
\end{tabular}%
\end{equation*}%
where $D$ denotes $End(e_{1}R),$ is an isomorphism of rings. This completes
the proof.\bigskip
\end{proof}

From now on, we will denote the formal triangular matrix ring 
\begin{equation*}
\left( 
\begin{array}{cc}
D & 0 \\ 
\mathbb{M}_{n\times 1}(D) & D^{\prime }%
\end{array}%
\right)
\end{equation*}%
by $(D,D^{n},D^{\prime }).$

Let $R=(D,D^{n},D^{\prime })$, where $D$ is a division ring and $D^{\prime }$
is a division subring of $\mathbb{M}_{n}(D)$. Then $R=(D,0,0)\oplus
(0,D^{n},D^{\prime })$, where $(D,0,0)$ is a simple right ideal and $%
(0,D^{n},D^{\prime })$ is a local right ideal with the maximal submodule $%
(0,D^{n},0)$.

Define $(D,\mathbb{M}_{n}(D))_{i}$ as the set of ordered pairs $(a,A)$,
where $a\in D$, $A\in \mathbb{M}_{n}(D)$, and a scalar multiplication

\begin{equation*}
(a,A)(a_{1},(x_{k}),A_{1})=(aa_{1}+A^{(i)}(x_{k}),AA_{1}),
\end{equation*}%
where $(a_{1},(x_{k}),A_{1})\in R$ and $A^{(i)}$ denotes the $i$th row of $A$%
. It is easy to see that $(D,D^{\prime })_{i}$ is a right $R$--submodule of $%
(D,\mathbb{M}_{n}(D))_{i}$. Also, if $B_{i}=\oplus _{j\neq i}e_{j}D$, where $%
\{e_{1},\ldots ,e_{n}\}$ is the natural basis for $D^{n}$ over $D$, then $%
(D,D^{\prime })_{i}$ is isomorphic to $\dfrac{(0,D^{n},D^{\prime })}{%
(0,B_{i},0)}$.

\begin{lemma}
\label{lem1} $u:(D,D^{\prime })_{i}\rightarrow (D,\mathbb{M}_{n}(D))_{j}$ is
a nonzero $R$--homomorphism if and only if there exist $d_{0}\in D$ and $%
A_{0}\in \mathbb{M}_{n}(D)$ such that $A_{0}[j,k]=\delta _{ki}d_{0}$ ($%
k=1,\ldots ,n$) and $u(d,A)=(d_{0}d,A_{0}A)$ for all $(d,A)\in (D,D^{\prime
})_{i}$.
\end{lemma}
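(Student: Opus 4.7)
The plan is to exploit that $(1, I) \in (D, D')_i$ is a cyclic generator of the module as a right $R$-module. First, I would note that $I \in D'$: in the notation of the proof of Theorem \ref{thmf}, $\varphi $ is a unital ring isomorphism, hence $\varphi (1_{End(f_{1}R)}) = I \in \varphi (End(f_{1}R)) = D'$. Moreover, for any $(d, A) \in (D, D')_i$, one has $(d, A) = (1, I) \cdot (d, 0, A)$ in the module structure, so $u$ is completely determined by $u(1, I)$.

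For the $(\Leftarrow)$ direction, given $d_0$ and $A_0$ satisfying $A_0[j, k] = \delta_{ki} d_0$, I would verify directly that $u(d, A) := (d_0 d, A_0 A)$ defines an $R$-homomorphism into $(D, \mathbb{M}_n(D))_j$. Additivity is immediate; right $R$-linearity reduces, upon expanding both $u((d,A)(a_1, x, A_1))$ and $u(d, A)(a_1, x, A_1)$, to the identity $d_0 A^{(i)} x = (A_0 A)^{(j)} x$ for all $A \in D'$ and $x \in D^n$. Since row $j$ of $A_0$ is, by hypothesis, zero except in column $i$ where it equals $d_0$, we have $(A_0 A)^{(j)} = A_0^{(j)} A = d_0 A^{(i)}$, which closes the check.

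For the $(\Rightarrow)$ direction, set $(d_0, A_0) := u(1, I)$. Applying $R$-linearity to $(d, A) = (1, I) \cdot (d, 0, A)$ immediately yields
\[
u(d, A) = (d_0, A_0) \cdot (d, 0, A) = (d_0 d, A_0 A).
\]
To pin down row $j$ of $A_0$, I would evaluate $u$ on $(1, I) \cdot (0, e_k, 0) = (I^{(i)} e_k, 0) = (\delta_{ki}, 0)$ for each $k = 1, \ldots, n$. The left-hand side returns $(d_0 \delta_{ki}, 0)$ by the already-derived formula, while $u(1, I) \cdot (0, e_k, 0) = (A_0^{(j)} e_k, 0) = (A_0[j, k], 0)$. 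Comparing the first coordinates yields $A_0[j, k] = \delta_{ki} d_0$, completing the characterization.

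The only real technical care required is in carefully unpacking the twisted scalar multiplications defining $(D, D')_i$ and $(D, \mathbb{M}_n(D))_j$ — in particular, tracking which row index appears in the module action — and in confirming that $I$ actually belongs to $D'$. Neither step presents a genuine obstacle; once the right generator $(1, I)$ is identified, the formula and the constraint on $A_0$ fall out of straightforward computation.
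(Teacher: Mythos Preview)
Your argument is correct and, since the paper's own proof is just ``Straightforward,'' you are supplying precisely the routine verification the authors left implicit: identify a cyclic generator, read off the image, and check compatibility with the module action. One small simplification: you do not need to trace through the construction in Theorem~\ref{thmf} to see that $I\in D'$; by hypothesis $D'$ is a division subring of $\mathbb{M}_n(D)$, hence contains the identity $I$ automatically.
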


\begin{proof}
Straightforward.
\end{proof}

\begin{lemma}
\label{no middle class & factor rings}\cite[Lemma 1]{Er} The property of
having no (simple) middle class is inherited by factor rings.
\end{lemma}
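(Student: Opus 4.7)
My plan is to observe that for any two-sided ideal $I$ of $R$, a right $R/I$-module $M$ is also a right $R$-module (with $MI=0$), and the notions involved in ``no (simple) middle class'' behave well under this identification.

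First I would fix a two-sided ideal $I$ of $R$ and an arbitrary (simple) right $R/I$-module $M$, and view it as a right $R$-module annihilated by $I$. Since $R$ has no (simple) middle class, $M$ is either injective or poor in $\func{Mod}$--$R$. The core of the proof is to verify that each of these two alternatives is preserved when we pass to $\func{Mod}$--$R/I$.

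For the injective alternative, if $M$ is injective in $\func{Mod}$--$R$ and $MI=0$, then the standard change-of-rings fact (any $R$-homomorphism from an $R/I$-submodule $K$ of an $R/I$-module $N$ into $M$ is automatically an $R/I$-homomorphism, since $I$ annihilates everything in sight, and any extension to an $R$-homomorphism $N\to M$ is likewise an $R/I$-homomorphism) gives that $M$ is injective in $\func{Mod}$--$R/I$. For the poor alternative, given any $R/I$-module $N$ with $M$ being $N$-injective in $\func{Mod}$--$R/I$, the same annihilation observation shows that $M$ is $N$-injective in $\func{Mod}$--$R$ as well, so poorness of $M$ over $R$ forces $N$ to be semisimple as an $R$-module, hence as an $R/I$-module (since $R$-submodules and $R/I$-submodules of $N$ coincide). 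Thus $M$ is poor in $\func{Mod}$--$R/I$.

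Combining the two cases shows that $R/I$ has no middle class. The exact same argument, restricted to simple modules, handles the ``no simple middle class'' version, because a simple $R/I$-module is precisely a simple $R$-module annihilated by $I$. The only point that requires any care is the translation of the injectivity domain between the two module categories, and this is entirely formal, so I do not anticipate any substantial obstacle.
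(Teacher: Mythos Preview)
Your argument is correct. The paper itself does not supply a proof of this lemma; it merely cites \cite[Lemma 1]{Er}. Your proof is precisely the standard change-of-rings argument one expects here: the identification of $\func{Mod}$--$R/I$ with the full subcategory of $\func{Mod}$--$R$ consisting of modules annihilated by $I$ preserves submodules, homomorphisms, relative injectivity, and semisimplicity, so both the ``injective'' and ``poor'' alternatives transfer from $R$ to $R/I$. There is nothing to add or correct.
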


\begin{lemma}
\label{middle class & ring direct sum}Suppose a ring $R=S\oplus T$ is a
direct sum of two rings $S$ and $T,$ where $S$ is semisimple. Then $R$ has
no (simple) middle class if and only if $T$ has no (simple) middle class.
\end{lemma}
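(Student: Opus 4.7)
The plan is to exploit the central idempotent decomposition coming from $R = S \oplus T$ to split every right $R$-module canonically as $M = M \cdot 1_S \oplus M \cdot 1_T$, where $1_S \in S$ and $1_T \in T$ are the two central orthogonal idempotents of $R$. The piece $M \cdot 1_S$ is annihilated by $T$ and hence identifies with an $S$-module, while $M \cdot 1_T$ is annihilated by $S$ and identifies with a $T$-module. Since $\mathrm{Hom}_R$ vanishes between modules killed by $S$ and those killed by $T$, and since $R$-submodules (respectively $R$-homomorphisms) of a module killed by $S$ coincide with its $T$-submodules (respectively $T$-homomorphisms), one checks that for $R$-modules $M = M_S \oplus M_T$ and $N = N_S \oplus N_T$ (with subscripts indicating which component annihilates them), $M$ is $N$-injective over $R$ if and only if $M_S$ is $N_S$-injective over $S$ and $M_T$ is $N_T$-injective over $T$. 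As $S$ is semisimple, every $M_S$ is both semisimple and injective over $R$, so the $R$-injectivity behaviour of $M$ is controlled entirely by its $T$-component.

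For the forward direction I would take a right $T$-module $X$, regard it as an $R$-module killed by $S$, and invoke the hypothesis that $X$ is either $R$-injective or $R$-poor. In the injective case, restricting to $T$-modules immediately gives that $X$ is $T$-injective. In the poor case, suppose $X$ is $N$-injective over $T$ for some $T$-module $N$; viewing $N$ as $R$-module (killed by $S$) makes $X$ into $N$-injective over $R$, so $N$ must be semisimple over $R$. Each simple summand of such $N$ is killed by $S$, hence is a simple $T$-module, so $N$ is semisimple over $T$ and $X$ is $T$-poor.

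For the converse I would start with an arbitrary $R$-module $M = M_S \oplus M_T$ and invoke the $T$-hypothesis on $M_T$. If $M_T$ is $T$-injective, then $M$ is a direct sum of two $R$-injective modules and is thus $R$-injective. If $M_T$ is $T$-poor and $M$ happens to be $N$-injective over $R$ for some $N = N_S \oplus N_T$, the decomposition principle of the first paragraph gives that $M_T$ is $N_T$-injective over $T$, forcing $N_T$ to be $T$-semisimple; $N_S$ is automatically semisimple, so $N$ is $R$-semisimple and $M$ is $R$-poor.

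The simple-middle-class version follows the same outline, using the observation that a simple right $R$-module is either a simple $S$-module (automatically $R$-injective) or a simple $T$-module (killed by $S$, so that $R$- and $T$-structures coincide). The only potentially fiddly step is establishing that relative injectivity decomposes along the ring decomposition, but this is routine once one records that $\mathrm{Hom}_R$ vanishes between the $S$- and $T$-components.
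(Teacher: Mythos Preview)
Your proof is correct and rests on the same idea as the paper's: split every $R$-module as $M = M\cdot 1_S \oplus M\cdot 1_T$ and observe that, since $S$ is semisimple, the $S$-component is automatically both semisimple and injective, so all the content lives in the $T$-component. The one methodological difference is that for the implication ``$R$ has no (simple) middle class $\Rightarrow$ $T$ has no (simple) middle class'' the paper simply quotes the preceding lemma (the property is inherited by factor rings, and $T \cong R/S$), whereas you argue both directions directly via the splitting of relative injectivity; your route is self-contained, the paper's is a one-line citation.
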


\begin{proof}
Let $M$ be an $N$--injective right $R$--module, where $N$ is cyclic and
nonsemisimple. Then $N$ is isomorphic to a direct sum $S/A\oplus T/B$ of
right $R$--modules for some right ideals $A$ and $B$ contained in $S$ and $T$%
, respectively. Note that $T/B$ is not semisimple (as both $R$-- and $T$%
--modules). Since $M=MS\oplus MT,$ $MT$ is $(T/B)$--injective as both $R$--
and $T$--modules. By assumption, $MT$ is an injective right $T$--module.
However, it is not difficult to see that it is also injective as an $R$%
--module. We may also show, in a similar way, that $MS$ is an injective
right $R$--module. This gives that $M$ is an injective $R$--module. Thus we
established the sufficiency part. The necessity is obvious by the above
\linebreak lemma.\bigskip
\end{proof}

Let $S$ be a subring of a ring $R$ and $u$ a unit in $R$. Obviously, $%
uSu^{-1}$ is a subring of $R$ isomorphic to $S$ as a ring. We call $uSu^{-1}$
\emph{a conjugate ring of }$S$\emph{\ in }$R$.

\begin{theorem}
\label{thm1} Let $R$ be a right nonsingular right Artinian ring. Then $R$
has no right middle class if and only if $R\cong S\oplus T,$ where $S$ is a
semisimple Artinian ring and $T$ is zero or Morita equivalent to a formal
triangular matrix ring of the form $(D,D^{n},D^{\prime })$ where $D$ is a
division ring and $D^{\prime }$ is a division subring of $\mathbb{M}_{n}(D)$
such that for each conjugate ring $U$ of $D^{\prime }$ in $\mathbb{M}_{n}(D)$%
, the set of $i$--th rows of elements in $U$ span $D^{n}$ as a left $D$%
--space for every $i=1,\ldots ,n$.
\end{theorem}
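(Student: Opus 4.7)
The statement is an iff; I handle necessity and sufficiency in turn.

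For necessity, suppose $R$ is right nonsingular right Artinian with no right middle class. By Lemma \ref{middle class & ring direct sum} I split off the maximal semisimple Artinian ring direct summand, writing $R = S \oplus T$ with $T$ indecomposable (or $T = 0$, in which case we are done). The hypotheses put $T$ in the setting of Corollary 5 of \cite{Er}, and Theorem \ref{thmf} then yields that $T$ is Morita equivalent to $R' = (D, D^n, D')$ for a division ring $D$ and division subring $D' \subseteq \mathbb{M}_n(D)$. Since relative injectivity (and hence the poor/injective dichotomy) is categorical, having no right middle class is Morita invariant, so $R'$ also has no right middle class. It remains to verify the conjugate row-spanning condition on $D'$. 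Suppose for contradiction that there is a unit $u \in \mathbb{M}_n(D)$ and an index $i$ with the $D$-span of the $i$-th rows of $U := uD'u^{-1}$ a proper subspace of $D^n$. I would then construct from $(U, i)$ a length-two local uniserial right $R'$-module $(D, U)_i$ (defined by replacing $D'$ with $U$ in the scalar multiplication defining $(D, \mathbb{M}_n(D))_i$), and a Lemma \ref{lem1}-style computation would show that the failure of row-spanning produces a homomorphism from a submodule of a test module $(D, D')_j$ into $(D, U)_i$ which does not extend; hence $(D, U)_i$ is not injective. On the other hand, by direct inspection, $(D, U)_i$ is $N$-injective for at least one nonsemisimple $N$, so $(D, U)_i$ is not poor either, contradicting the no middle class hypothesis.

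For sufficiency, again by Lemma \ref{middle class & ring direct sum} I reduce to the assertion that $R' = (D, D^n, D')$ satisfying the conjugate spanning condition has no right middle class. A direct check shows $R'$ is right Artinian right $SI$ with homogeneous essential right socle isomorphic to $(e_1R')^{n+1}$ and exactly one simple singular right module (the top of $f_1 R'$), so properties \textbf{(P1)} and \textbf{(P2)} hold. By Theorem 1(ii)(b) of \cite{Er}, no right middle class for $R'$ then follows once every proper essential submodule $M$ of $E(f_1 R')$ is shown to be poor. For such an $M$, the ring $\operatorname{End}(M)$ embeds into $\operatorname{End}(E(M)) \cong \mathbb{M}_n(D)$ as some conjugate $U$ of $D'$, determined by a choice of basis of $\operatorname{Soc}(M)$; invoking the conjugate spanning hypothesis for this $U$, together with Lemma \ref{lem1}, supplies, for every nonsemisimple cyclic right $R'$-module $N$, a homomorphism from a submodule of $N$ into $M$ that cannot be extended to $N$. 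Hence $M$ is poor, completing the sufficiency direction.

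The main obstacle is the bridge, in both directions, between the linear-algebraic row-spanning condition on conjugates of $D'$ and the (non)existence of nonextending $R'$-homomorphisms into length-two uniserial test modules, as mediated by Lemma \ref{lem1}. In necessity, I must cook up a specific witness module $(D, U)_i$ that realizes middle-class behavior; in sufficiency, I must match each essential submodule $M$ of $E(f_1 R')$ to the correct conjugate $U$ of $D'$ and carry out the extension analysis uniformly. A secondary technicality is the Morita invariance of no middle class, which I would treat categorically: both the relative-injectivity relation and the class of poor modules are preserved by category equivalence, so the property that every module is injective or poor is as well.
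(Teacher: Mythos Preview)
Both directions of your sketch contain a genuine gap, and in each case the paper's argument takes a different route that avoids the problem.

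In the necessity direction, your witness module $(D,U)_i$ is not well defined as a right $R'$--module: the scalar multiplication on $(D,\mathbb{M}_n(D))_i$ uses the $D'$ sitting inside $R'=(D,D^n,D')$, so the subset $\{(d,A):A\in U\}$ is an $R'$--submodule only when $U D'\subseteq U$, which fails for a general conjugate $U=uD'u^{-1}$. The paper sidesteps this entirely by observing that $(1,0,u)R'(1,0,u^{-1})=(D,D^n,U)$, so $R'\cong(D,D^n,U)$ and it suffices to verify the spanning condition for $D'$ itself. That verification is then direct rather than by contradiction: the module $(D,De_{1k}D')_1$ is $(D,D')_k$--injective by Lemma~\ref{lem1}, hence injective (no middle class, and $(D,D')_k$ is nonsemisimple), hence equal to its injective hull $(D,\bigoplus_j De_{1j})_1$; this forces $De_{1k}D'=\bigoplus_j De_{1j}$, i.e.\ the $k$--th rows of $D'$ span $D^n$.

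In the sufficiency direction, your key claim that for a proper essential submodule $M$ of $E(f_1R')$ the ring $\operatorname{End}(M)$ embeds in $\mathbb{M}_n(D)$ as a \emph{conjugate of $D'$} is not justified and is in general false: such $M$ can have arbitrary composition length up to that of the injective hull, and its endomorphism ring need not be a division ring, let alone conjugate to $D'$. (Also, condition (b) of Theorem~1(ii) in \cite{Er} refers to proper essential submodules of $Q_T$, not of $E(f_1R')$.) The paper instead uses the equivalent condition (c), via \cite[Proposition~8]{Er}: assuming $(D,D^n,D')$ has a middle class, there is $X\le Q_{R'}=\mathbb{M}_{n+1}(D)$ properly containing the socle with $QX\neq Q$. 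Writing $X$ in block form, the failure of $QX=Q$ forces some column of the nonzero right $D'$--submodule $Y\le\mathbb{M}_{(n+1)\times n}(D)$ to vanish identically; from a nonzero element of $Y$ one then builds an explicit invertible $B\in\mathbb{M}_n(D)$ for which every matrix in $BD'B^{-1}$ has $(n,1)$--entry zero, so the $n$--th rows of this conjugate cannot span $D^n$, contradicting the hypothesis. The bridge you identify between row--spanning and extension of homomorphisms is thus replaced by a bridge between row--spanning and the $QX=Q$ condition, and the role of ``conjugate'' is played by this single explicit $B$ rather than by an endomorphism--ring identification.
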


\begin{proof}
Assume first that $R$ has no right middle class. By \cite[Theorem 2]{Er}, $%
R\cong S\oplus T,$ where $S$ is a semisimple Artinian ring and $T$ is zero
or a right Artinian right SI ring satisfying the properties $(P1)$ and $(P2)$%
. Suppose $T$ is not zero. Then by Theorem \ref{thmf}, $T$ is Morita
equivalent to a formal triangular matrix ring of the form $%
(D,D^{n},D^{\prime }),$ where $D$ is a division ring and $D^{\prime }$ is a
division subring of $\mathbb{M}_{n}(D).$ Since the property of having no
right middle class is a Morita invariant property (as remarked, for example,
in \cite{Er} before Proposition 5), the ring $(D,D^{n},D^{\prime })$ has no
right middle class. Note that $(D,0)_{1}\leq _{e}(D,\oplus
_{j=1}^{n}De_{ij})_{1}$ for all $i=1,\ldots ,n$. By Lemma~\ref{lem1}, $%
(D,De_{1k}D^{\prime })_{1}$ is $(D,D^{\prime })_{k}$--injective as a right $%
R $--module for all $k=1,\ldots ,n$. Since $R$ has no right middle class and 
$(D,D^{\prime })_{k}$ is nonsemisimple, $(D,De_{1k}D^{\prime })_{1}$ must be
injective for all $k=1,\ldots ,n$. On the other hand, we have $(D,0)_{1}\leq
(D,De_{1k}D^{\prime })_{1}\leq (D,\oplus _{j=1}^{n}De_{1j})_{1},$ which
gives that $De_{1k}D^{\prime }=\oplus _{j=1}^{n}De_{1j}$ for all $k=1,\ldots
,n$. It therefore follows that the set of the $k$--th rows of elements of $%
D^{\prime }$ span $D^{n}$ as a left $D$--space for all $k=1,\ldots ,n$. Now
let $U=uD^{\prime }u^{-1}$ for some unit $u$ in $\mathbb{M}_{n}(D)$.
Obviously, $R\cong (1,0,u)R(1,0,u^{-1})=(D,D^{n},U)$. Then $(D,D^{n},U)$ has
no right middle class. Repeating the above arguments, we complete the proof
of the necessity part.

For the sufficiency, it is enough, by Lemma \ref{middle class & ring direct
sum}, to show that if, for each conjugate ring $U$ of $D^{\prime }$ in $%
\mathbb{M}_{n}(D)$, the set of $i$--th rows of elements in $U$ span $D^{n}$
as a left $D$--space for every $i=1,\ldots ,n,$ then the ring $%
(D,D^{n},D^{\prime })$ has no right middle class. Assume the contrary, i.e.,
assume that $(D,D^{n},D^{\prime })$ has a right middle class. Note that the
maximal right quotient ring of $R$ is $Q=\mathbb{M}_{n+1}(D)$. By \cite[%
Proposition 8]{Er}, there exists $X\leq Q_{R}$ which contains the right
socle of $(D,D^{n},D^{\prime })$ properly such that $QX\neq Q$. Since the
right socle of the ring $(D,D^{n},D^{\prime })$ is $(D,D^{n},0)$, there
exists a nonzero right $D^{\prime }$--submodule $Y$ of $\mathbb{M}%
_{(n+1)\times n}(D)$ such that 
\begin{equation*}
X=\left( 
\begin{array}{cc}
\begin{array}{c}
D \\ 
D^{n}%
\end{array}
& Y%
\end{array}%
\right) .
\end{equation*}%
One can observe that if, for each $i=1,\ldots ,n,$ there exists an element
of $Y$ (depending on $i$) whose $i$--th column has a nonzero entry, then $%
QX=Q$. Thus, there exists $j$ such that the $j$--th column of each element
of $Y$ is zero. Without loss of generality, we may assume $j=1$. Then there
exist elements $d_{1},\ldots ,d_{n-1}$ of $D$ which are not all zero such
that $d_{1}A[2,1]+\cdots +d_{n-1}A[n,1]=0$ for all $A\in D^{\prime }$. We
may choose $d_{n-1}\neq 0$. Let $B=e_{11}+(%
\sum_{i=2}^{n}e_{ii}+d_{i-1}e_{ni}).$ Obviously, $B$ is invertible in $%
\mathbb{M}_{n}(D)$, and all elements of $BD^{\prime }B^{-1}$ have zero in
the $(n,1)$--th entry. It follows that the $n$--th rows of elements of $%
BD^{\prime }B^{-1}$ cannot span $D^{n}$. This completes the proof.
\end{proof}

\begin{corollary}
\label{cor1}(i) Let $D$ be a division ring and $D^{\prime }$ a division
subring of $D.$ Then the ring $\left( 
\begin{array}{cc}
D & 0 \\ 
D & D^{\prime }%
\end{array}%
\right) $ has no right middle class.

(ii) Let $D$ be a division ring and $D^{\prime }$ a division subring of $%
\mathbb{M}_{2}(D)$. Then the ring $R=(D,D^{2},D^{\prime })$ has no right
middle class if and only if the set of the $i$th rows of elements of $%
D^{\prime }$ span $D^{2}$ for $i=1,2$.
\end{corollary}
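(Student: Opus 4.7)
The plan is to derive both parts as direct specializations of Theorem~\ref{thm1}, exploiting the fact that the ``rows of every conjugate span'' hypothesis trivializes in small dimensions.

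For part~(i), I would take $n=1$ in Theorem~\ref{thm1}, so that $\mathbb{M}_{n}(D)=D$. Every conjugate $uD'u^{-1}$ is then a division subring of $D$ containing~$1$, and, viewing its elements as the single ``rows'' of $1\times 1$ matrices, the left $D$-span is automatically $D\cdot 1=D$. Hence the conjugate-spanning hypothesis of Theorem~\ref{thm1} holds vacuously, and the triangular ring $\bigl(\begin{smallmatrix}D&0\\ D&D'\end{smallmatrix}\bigr)$ has no right middle class.

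For part~(ii), I would take $n=2$ in Theorem~\ref{thm1}. The forward direction is immediate by applying the conjugate-spanning condition to the trivial conjugate $u=I$. For the converse, assuming that the $i$th rows of $D'$ span $D^{2}$ for $i=1,2$, I need to show that the $i$th rows of every conjugate $uD'u^{-1}$ span $D^{2}$ as well. My strategy is to recast a hypothetical failure of spanning for some conjugate as the existence of a nonzero row $r\in D^{2}$ and a nonzero column $c\in D^{2}$ with $rAc=0$ for every $A\in D'$ (equivalently, $D'c\subseteq cD$, i.e.\ a common right $D$-invariant line for $D'$), and then translate such a line back into a failure of the coordinate row-spanning hypothesis at $i=1$ or $i=2$, yielding the required contradiction.

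The main obstacle will be this last reduction: ruling out an ``off-axis'' common invariant line for $D'$ using only the spanning hypothesis along the two standard axes $r=e_{1},e_{2}$. Carrying it out requires careful bookkeeping with both the left and right $D$-actions on $D^{2}$ (since $D$ need not be commutative), and exploits the fact that in $\mathbb{M}_{2}(D)$ the only two coordinate lines are $e_{1}D$ and $e_{2}D$, so the spanning hypothesis at $e_{1}$ and $e_{2}$ leaves very little room for a non-coordinate invariant line to hide. This step is the technical content of~(ii) and is what goes beyond a purely formal consequence of Theorem~\ref{thm1}.
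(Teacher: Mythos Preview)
Your treatment of~(i) is correct and is exactly the paper's one-line appeal to Theorem~\ref{thm1} with $n=1$.

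For~(ii) you diverge from the paper. The paper does not try to verify the conjugate-spanning hypothesis of Theorem~\ref{thm1} from its statement; it goes back into the \emph{proof} of Theorem~\ref{thm1} at $n=2$, where the linear relation produced there has a single term and is meant to force $A[2,1]=0$ (or, in the symmetric case, $A[1,2]=0$) for every $A\in D'$, so that $D'$ itself is upper or lower triangular and one of the two coordinate row families visibly fails to span.

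Your route is different: you correctly reformulate ``some conjugate fails to span'' as ``there is a nonzero column $c$ with $D'c\subseteq cD$'', and then propose to show that any such invariant line forces a failure of row-spanning at $i=1$ or $i=2$. That last implication is precisely the gap, and it is not repairable along the lines you sketch. Over $D=\mathbb{Q}(t)$ with $\delta=d/dt$, set
\[
D''=\left\{\begin{pmatrix}a&\delta(a)\\0&a\end{pmatrix}:a\in D\right\},\qquad P=\begin{pmatrix}1&0\\1&1\end{pmatrix},\qquad D'=PD''P^{-1},
\]
so that a typical element of $D'$ is $\begin{pmatrix}a-\delta(a)&\delta(a)\\-\delta(a)&a+\delta(a)\end{pmatrix}$. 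Taking $a=1$ and $a=t$ one checks that both the first-row family $(a-\delta(a),\,\delta(a))$ and the second-row family $(-\delta(a),\,a+\delta(a))$ span $D^{2}$; yet $D'\begin{pmatrix}1\\1\end{pmatrix}=\begin{pmatrix}a\\a\end{pmatrix}\in\begin{pmatrix}1\\1\end{pmatrix}D$ for every $a$, so $D'$ carries an off-axis invariant column line. Hence the implication ``invariant line $\Rightarrow$ some coordinate row family fails to span'' is false in general, and your proposed reduction for the converse in~(ii) cannot be completed as described. If you want to salvage~(ii), you will have to argue as the paper does, inside the proof of Theorem~\ref{thm1}, rather than from its statement alone.
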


\begin{proof}
(i) Clear by Theorem \ref{thm1}. (ii) If we take $n=2$ in the proof of
Theorem \ref{thm1}, then we deduce that when $R$ has right middle class,
there exists a nonzero element $d\in D$ such that $dA[2,1]=0$ for all $A\in
D^{\prime }.$ This contradicts the fact that the set of the second rows of
elements of $D^{\prime }$ span $D^{2}.\bigskip $
\end{proof}

It is shown in \cite[Proposition 6]{Er} that if $R$ is a right Artinian
right $SI$--ring with homogeneous right socle and a unique local module of
length two up to isomorphism, then $R$ has no right middle class. Example %
\ref{ex2} below shows that the converse of this fact is not true in general.
Before the example, we need the following proposition.

\begin{proposition}
\label{prop1} Let $D$ be a division ring and $D^{\prime }$ be a division
subring of $\mathbb{M}_{n}(D).$ Let $R=(D,D^{n},D^{\prime })$ and let $%
\mathfrak{R}$ be the set of the first rows of all elements of $D^{\prime }$.
Then $R$ has a unique local right $R$--module of length two up to
isomorphism if and only if $\bigcup_{(x_{i})\in \mathfrak{R}}D(x_{i})=D^{n}$.
\end{proposition}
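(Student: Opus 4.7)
The ring $R=(D,D^{n},D^{\prime})$ has two non-isomorphic indecomposable projective right modules: the simple module $e_{1}R\cong(D,0,0)$ and the local module $f_{1}R\cong(0,D^{n},D^{\prime})$ of composition length $n+1$, with radical $J(f_{1}R)=(0,D^{n},0)$. Since $e_{1}R$ is simple, every local right $R$-module of length two must be a quotient $f_{1}R/K$ where $K$ is a maximal $R$-submodule of $J(f_{1}R)$. A direct computation shows that the right $R$-action on $(0,D^{n},0)$ factors through the projection onto $D$, so $J(f_{1}R)\cong S_{1}^{n}$ as a right $R$-module, where $S_{1}:=(D,0,0)$. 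Consequently the maximal $R$-submodules of $J(f_{1}R)$ are exactly the codimension-one right $D$-subspaces of $D^{n}$, each of the form $V_{\alpha}=\{v\in D^{n}:\alpha v=0\}$ for a nonzero row vector $\alpha\in\mathbb{M}_{1\times n}(D)$, with $V_{\alpha}=V_{\alpha^{\prime}}$ iff $\alpha^{\prime}=d\alpha$ for some $d\in D^{*}$.

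Next I would determine when two such quotients are isomorphic. By projectivity of $f_{1}R$ together with its local Artinian structure (so that Nakayama upgrades any lift to a surjection, hence to an automorphism of $f_{1}R$), any isomorphism $f_{1}R/K_{\alpha}\to f_{1}R/K_{\alpha^{\prime}}$ lifts to an automorphism of $f_{1}R$ sending $K_{\alpha}$ to $K_{\alpha^{\prime}}$. Using $\mathrm{End}_{R}(f_{1}R)\cong f_{1}Rf_{1}\cong D^{\prime}$, where $A\in D^{\prime}$ acts on $J(f_{1}R)$ via $v\mapsto Av$, the easy identity $AV_{\alpha}=V_{\alpha A^{-1}}$ shows that the isomorphism classes of local length-two right $R$-modules correspond bijectively to the orbits of nonzero row vectors under the action $(d,A)\cdot\alpha=d\alpha A^{-1}$ of $D^{*}\times(D^{\prime})^{*}$.

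Finally I would translate this orbit condition into the statement on $\mathfrak{R}$. Taking the reference row vector $e_{1}=(1,0,\ldots,0)$, note that $e_{1}B$ is exactly the first row of $B\in D^{\prime}$, and since every nonzero element of $D^{\prime}$ is invertible in $\mathbb{M}_{n}(D)$ it cannot have a zero row; hence $\{e_{1}B:B\in(D^{\prime})^{*}\}=\mathfrak{R}\setminus\{0\}$, and the orbit of $e_{1}$ equals $D^{*}\cdot(\mathfrak{R}\setminus\{0\})$. The unique-orbit condition therefore reads $D^{*}\cdot(\mathfrak{R}\setminus\{0\})=\mathbb{M}_{1\times n}(D)\setminus\{0\}$, which upon adjoining $0$ to both sides is exactly $\bigcup_{(x_{i})\in\mathfrak{R}}D(x_{i})=D^{n}$, yielding the desired equivalence.

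The main subtlety I expect is in the second step: although $A\in(D^{\prime})^{*}$ acts on column vectors by left multiplication, the induced action on the row vectors $\alpha$ that parameterize maximal submodules is right multiplication by $A^{-1}$, so one must keep careful track of the left/right structures to place the inverse in the correct spot when matching the orbit condition to the stated criterion on first rows.
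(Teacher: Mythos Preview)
Your proof is correct, but it follows a different route from the paper's. The paper works on the injective side: it shows that $(D,\bigoplus_{j}De_{1j})_{1}$ is the injective hull of $(D,e_{11}D^{\prime})_{1}$, embeds every local length-two module into this common injective hull via a nonsingularity argument, identifies the local length-two submodules there as $(D,(\sum_{j}e_{1j}d_{j})D^{\prime})_{1}$, and then uses Lemma~\ref{lem1} (the explicit description of homomorphisms $(D,D^{\prime})_{i}\to(D,\mathbb{M}_{n}(D))_{j}$) to decide when such a module is isomorphic to the reference module $(D,e_{11}D^{\prime})_{1}$. You instead work on the projective side: you realize every local length-two module as a quotient of the projective cover $f_{1}R$, lift isomorphisms between quotients to automorphisms of $f_{1}R$ via projectivity and Nakayama, and translate the question into an orbit computation for the action of $D^{*}\times(D^{\prime})^{*}$ on nonzero row vectors. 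Your approach is more self-contained and avoids introducing the auxiliary modules $(D,\mathbb{M}_{n}(D))_{i}$ and Lemma~\ref{lem1}; the paper's approach has the advantage of reusing the injective-hull machinery that is needed anyway for Theorem~\ref{thm1}. Both arrive at the same orbit criterion, and your identification of the orbit of $e_{1}$ with $D^{*}\cdot(\mathfrak{R}\setminus\{0\})$ (using that nonzero elements of $D^{\prime}$ are invertible, hence have nonzero first row, and that $A\mapsto A^{-1}$ permutes $(D^{\prime})^{*}$) is exactly right.
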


\begin{proof}
Let $M$ be a local right $R$--module of length two. Then there exists an
epimorphism $g:R\rightarrow M$, and since $M$ is local, we must have $%
g(0,D^{n},D^{\prime })=M$. It follows that there exists a maximal submodule $%
A$ of $(0,D^{n},0)$ such that $A=Ker(g)$. Thus the unique simple submodule
of $M$, say $N$, is isomorphic to $(D,0,0)$.

Observe that $(D,\bigoplus_{j=1}^{n}De_{1j})_{1}$ is injective relative to $%
(D,D^{\prime })_{i}$ for each $i=1,\ldots ,n,$ by Lemma \ref{lem1}. Since $R$
can be embedded into the sum $(D,0,0)\oplus (D,D^{\prime })_{1}\oplus \cdots
\oplus (D,D^{\prime })_{n},$ we get that $(D,\bigoplus_{j=1}^{n}De_{1j})_{1}$
is an injective $R$--module. It follows that $(D,%
\bigoplus_{j=1}^{n}De_{1j})_{1}$ is the injective hull of the local right $R$%
--module $(D,e_{11}D^{\prime })_{1}$. There is an isomorphism from $N$ to $%
(D,0)_{1}$ which extends to a homomorphism $f$ from $M$ into $%
(D,\bigoplus_{j=1}^{n}De_{1j})_{1}$. Since $(D,%
\bigoplus_{j=1}^{n}De_{1j})_{1}$ is nonsingular and $M/N$ is singular, $f$
must be a monomorphism. It therefore follows that every local right $R$%
--module of length two can be embedded into $(D,%
\bigoplus_{j=1}^{n}De_{1j})_{1}$.

Let $M^{\prime }$ be a local submodule of $(D,%
\bigoplus_{j=1}^{n}De_{1j})_{1} $ of length two. Then $M^{\prime }=(D,X)_{1}$
for a right $D^{\prime }$--subspace $X$ of $\bigoplus_{j=1}^{n}De_{1j}.$
Since $M^{\prime }$ is local, $X$ must be one--dimensional as a $D^{\prime }$%
--space. This shows that any local right $R$--submodule of length two in $%
(D,\bigoplus_{j=1}^{n}De_{1j})_{1}$ is of the form $(D,(%
\sum_{j=1}^{n}e_{1j}d_{j})D^{\prime })_{1}$ for some $d_{1},\ldots ,d_{n}\in
D$. Moreover, one can also prove that there exists an isomorphism from $%
(D,(\sum_{j=1}^{n}e_{1j}d_{j})D^{\prime })_{1}$ onto $(D,e_{11}D^{\prime
})_{1}$ if and only if there exists a nonzero $d\in D$ such that $%
d(d_{i})_{i=1}^{n}\in \mathfrak{R}$ if and only if there exists $d\in D$ and 
$(x_{i})_{i=1}^{n}\in \mathfrak{R}$ such that $(d_{i})=d(x_{i})_{i=1}^{n}$
if and only if $(d_{i})_{i=1}^{n}\in \bigcup_{(x_{i})\in \mathfrak{R}%
}D(x_{i})$. Now the result follows.\bigskip
\end{proof}

\begin{remark}
\label{rem1}It can be easily seen from Corollary \ref{cor1} (ii) that, for a
division ring $D$ and a division subring $D^{\prime }$ of $\mathbb{M}%
_{2}(D), $ the ring 
\begin{equation*}
\left( 
\begin{array}{cc}
D & 0 \\ 
\begin{array}{c}
D \\ 
D%
\end{array}
& D^{\prime }%
\end{array}%
\right)
\end{equation*}%
has right middle class if and only if either all elements of $D^{\prime }$
are lower triangular matrices or all elements of $D^{\prime }$ are upper
triangular matrices. Suppose, in particular, that all elements of $D^{\prime
}$ are upper triangular matrices. Then for every $[a_{ij}]\in D^{\prime }$, $%
a_{22}$ is uniquely determined. Thus, we have a mapping from $D^{\prime }$
to $D$ such that $[a_{ij}]\mapsto a_{22}$ for all $[a_{ij}]\in D^{\prime }$
which is a ring monomorphism, that is, $D^{\prime }$ can be embedded into $D$
as a ring.
\end{remark}

\begin{example}
\label{ex2}(i) If $D$ is a division ring and $D^{\prime }$ is a division
subring of $\mathbb{M}_{2}(D)$ consisting only of lower triangular matrices,
then the ring $(D,D^{2},D^{\prime })$ is a right Artinian right $SI$ ring
which satisfies the properties $(P1)$ and $(P2).$ However, by Remark \ref%
{rem1}, $(D,D^{2},D^{\prime })$ has right middle class. For instance, if we
let $\delta $ be a derivation on $D$ and consider the division subring 
\begin{equation*}
D^{\prime }=\left\{ \left( 
\begin{array}{cc}
a & \delta (a) \\ 
0 & a%
\end{array}%
\right) \mid a\in D\right\}
\end{equation*}
of $\mathbb{M}_{2}(D),$ then the ring $(D,D^{2},D^{\prime })$ has right
middle class. Thus the converse of Theorem 2 of \cite{Er} is not true, in
general.

(ii) Let $D=\mathbb{Z}_{3}$ and $w=\left( 
\begin{array}{cc}
1 & 2 \\ 
1 & 1%
\end{array}%
\right) .$ Observe that $D^{\prime }=\{0,1,w,w^{2},\ldots ,w^{7}\}$ is a
field. By Remark \ref{rem1}, the ring $(D,D^{2},D^{\prime })$ has no right
middle class.
\end{example}

\begin{example}
\label{sey}Let $p$ be a prime integer, $F\leq F_{1}$ a field extension, and $%
K$ a division subring of $\mathbb{M}_{p}(F)$ which properly contains the
field of scalar matrices in $\mathbb{M}_{p}(F)$. Then the ring%
\begin{equation*}
R=\left( 
\begin{array}{cc}
F_{1}\smallskip & 0 \\ 
F_{1}^{p} & K%
\end{array}%
\right)
\end{equation*}%
has no right middle class. If, in particular, we take $F=%
\mathbb{Q}
,$ then any division subring of $\mathbb{M}_{p}(%
\mathbb{Q}
)$ contains all scalar matrices. It follows that $R$ has no right middle
class for any division subring $K$ of $\mathbb{M}_{p}(%
\mathbb{Q}
)$ which is not the field of scalar matrices.
\end{example}

\begin{proof}
We first claim that for any $i=1,2,\ldots ,p,$ the $i$--th rows of all
elements of $K$ span $F^{p}$ as an $F$--space. To see this, let $A$ be an
element of $K$ which is not a scalar matrix and let $m_{A}(x)$ be the
minimal polynomial of $A$ over $F.$ Let $m_{A}(x)=u(x)v(x),$ where $%
u(x),v(x)\in F[x].$ Then $0=m_{A}(A)=u(A)v(A),$ which implies that one of
the determinants $\det (u(A))$ or $\det (v(A))$ is zero. Assume that $\det
(u(A))=0.$ Since $K$ contains all scalar matrices over $F,$ we have $u(A)\in
K.$ But $K$ is a division ring which means that every nonzero matrix in $K$
has nonzero determinant. This gives that $u(A)=0.$ Since $\deg (u(x))\leq
\deg (m_{A}(x))$ and $m_{A}(x)$ is the monic polynomial of least degree
which assumes $A$ as a root, we must have $u(x)$ and $m_{A}(x)$ are
associates. It follows that $m_{A}(x)$ is irreducible over $F.$ Then the
characteristic polynomial $c_{A}(x)$ of $A$ over $F$ is a power of $m_{A}(x)$%
. This implies that $\deg (m_{A}(x))$ divides $\deg (c_{A}(x))=p.$ Since $p$
is prime, $\deg (m_{A}(x))$ is either $1$ or $p.$ If $\deg (m_{A}(x))=1,$
then $A$ is similar to a scalar matrix, $B$ say. In other words, there
exists a $p\times p$ invertible matrix $P$ over $F$ such that $P^{-1}AP=B.$
Thus $A=PBP^{-1}=BPP^{-1}=B,$ a contradiction. Therefore $m_{A}(x)\in F[x]$
is an irreducible polynomial of degree $p.$

Now we shall show that for any $i=1,2,\ldots ,p,$ the $i$--th rows of the
matrices $I,$ $A,\ldots ,A^{p-1}$ span $F^{p}$ as an $F$--space. In order to
prove this, without loss of generality, we may choose $i=1.$ Assume the
contrary, i. e., the first rows of the matrices $I,$ $A,\ldots ,A^{p-1}$ do
not span $F^{p}.$ Then the first rows of these matrices should be linearly
dependent. So, there exist scalar matrices $c_{0},c_{1},\ldots ,c_{p-1}$
over $F,$ not all zero, such that the first row of the matrix $%
C=c_{0}+c_{1}A+\cdots +c_{p-1}A^{p-1}$ is zero. This gives that $\det (C)=0.$
Since $C$ lies in $K,$ we must have $C=0.$ But then $A$ happens to be a root
of a polynomial over $F$ of degree at most $p-1,$ a contradiction.
Consequently, for any $i=1,2,\ldots ,p,$ the $i$--th rows of all elements of 
$K$ span $F^{p}$ as an $F$--space. This gives that the $F$--space spanned by
the $i$--th rows of elements of $K$ contains the standard basis which is
also contained in the $F_{1}$--space spanned by the $i$--th rows of elements
of $K.$ Therefore, for any $i=1,2,\ldots ,p,$ the $i$--th rows of all
elements of $K$ span $F_{1}^{p}$ as an $F_{1}$--space. This fact is true for
any conjugate of $K$ in $\mathbb{M}_{p}(F)$ because, just as $K,$ it also
properly contains the field of scalar matrices over $F.$ The proof is
complete by Theorem \ref{thm1}.
\end{proof}

\begin{remark}
\label{subdivision ring}Let $p$ be a prime number and $F$ be a field. Let $%
P(x)$ be an irreducible polynomial over $F$ of degree $p$ (if exists) and
let $A\in \mathbb{M}_{p}(F)$ be such that $P(A)=0$ (one can use the
companion matrix of $P(x)$ from linear algebra to find such $A).$ Then the
set $K=\{c_{0}+c_{1}A+\cdots +c_{p-1}A^{p-1}:c_{0},c_{1},\ldots ,c_{p-1}\in
F\}$ is a field isomorphic to $F[x]/(P(x))$ and properly contains the field
of scalar matrices over $F.$ Indeed, we can show that all division subrings $%
K$ of $\mathbb{M}_{p}(F)$ properly containing the field of scalar matrices
are of this form: Let $A$ be an element of $K$ which is not a scalar matrix
and let $B$ be any nonzero element of $K.$ Since the set of the first rows
of $I,$ $A,\ldots ,A^{p-1}$ is a basis for $F^{p},$ we must have, by the
same reasoning used in the proof of Example \ref{sey}, there exist scalar
matrices $c_{0},c_{1},\ldots ,c_{p-1},c_{p}$ over $F,$ not all zero, such
that $c_{0}+c_{1}A+\cdots +c_{p-1}A^{p-1}+c_{p}B=0.$ Here, clearly, $%
c_{p}\neq 0.$ It follows that $B$ is a linear combination of the powers $%
A^{i}$ of $A,$ where $i=0,1,\ldots ,p-1.$ Therefore $K=\{c_{0}+c_{1}A+\cdots
+c_{p-1}A^{p-1}:c_{0},c_{1},\ldots ,c_{p-1}\in F\}.$
\end{remark}

\begin{example}
Let $F$ be a field and $P(x)$ be an irreducible polynomial over $F$ of prime
degree. Then the ring%
\begin{equation*}
\left( 
\begin{array}{cc}
F\medskip & 0 \\ 
F[x]/(P(x)) & F[x]/(P(x))%
\end{array}%
\right)
\end{equation*}%
has no right middle class.
\end{example}

\begin{proof}
Let $\deg (P(x))=p.$ Set $K=\{c_{0}+c_{1}A+\cdots
+c_{p-1}A^{p-1}:c_{0},c_{1},\ldots ,c_{p-1}\in F\},$ where $A$ is the
companion matrix of $P(x)$ over $F.$ It is routine to check that the mapping%
\begin{equation*}
\left( 
\begin{array}{cc}
F\medskip & 0 \\ 
F[x]/(P(x)) & F[x]/(P(x))%
\end{array}%
\right) \longrightarrow \left( 
\begin{array}{cc}
F\smallskip & 0 \\ 
F^{p} & K%
\end{array}%
\right) ,
\end{equation*}%
by%
\begin{equation*}
\left( 
\begin{array}{cc}
a & 0 \\ 
\tsum\limits_{i=0}^{p-1}c_{i}x^{i} & \tsum\limits_{i=0}^{p-1}d_{i}x^{i}%
\end{array}%
\right) \longmapsto \left( 
\begin{array}{cc}
a & 0 \\ 
\begin{array}{c}
c_{0} \\ 
c_{1} \\ 
\vdots \\ 
c_{p-1}%
\end{array}
& \tsum\limits_{i=0}^{p-1}d_{i}A^{i}%
\end{array}%
\right)
\end{equation*}%
is a ring isomorphism. The result follows from Example \ref{sey} and Remark %
\ref{subdivision ring}.
\end{proof}

\begin{example}
\label{example_local module of length 2}The ring 
\begin{equation*}
R=\left( 
\begin{array}{cc}
\mathbb{C}
& 0 \\ 
\begin{array}{c}
\mathbb{C}
\\ 
\mathbb{C}%
\end{array}
& C%
\end{array}%
\right) ,\ \text{where}\ C=\left\{ \left( 
\begin{array}{cc}
a & -b \\ 
b & a%
\end{array}%
\right) \mid a,b\in 
\mathbb{R}
\right\} \cong 
\mathbb{C}
,
\end{equation*}%
has no right middle class. However, $R$ has at least two nonisomorphic local
right $R$--modules of length two by Proposition~\ref{prop1}. Therefore, the
converse of \cite[Proposition 6]{Er} is not true, in general.
\end{example}

Note that if a ring of the form $(D,D^{n},D^{\prime }),$ where $D$ and $%
D^{\prime }$ are as above, has no right middle class, then the property of
having no right middle class of $(D,D^{n},D^{\prime })$ remains unaltered
when we replace $D$ by any division ring containing $D.$ However, such a
replacement may result in an increased number of local modules of length
two. On the other hand, as the following theorem shows, for the ring $%
R=(D,D^{n},D^{\prime })$, it is necessary that certain local $R$--modules of
length two are isomorphic, which is also sufficient when $n=2.$

\begin{lemma}
\label{lem2}Let $1\leq i,j\leq n$. Then $(D,D^{\prime })_{i}\cong
(D,D^{\prime })_{j}$ if and only if $D^{\prime }$ contains an element $A$
such that $A[j,k]=\delta _{ik}c$ for some nonzero $c\in D$.
\end{lemma}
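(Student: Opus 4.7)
The plan is to apply Lemma \ref{lem1} in both directions, exploiting the natural embedding $(D,D^{\prime})_j \hookrightarrow (D,\mathbb{M}_n(D))_j$ and the observation that both $(D,D^{\prime})_i$ and $(D,D^{\prime})_j$ are local right $R$-modules of composition length~$2$ (each has simple socle isomorphic to $(D,0,0)$ and simple top, since $D'$ is a division ring).

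For sufficiency, I would take $A \in D'$ with $A[j,k] = \delta_{ik} c$, $c \neq 0$, and define
\[
u : (D,D^{\prime})_i \longrightarrow (D,D^{\prime})_j, \qquad u(d,B) = (cd,\, AB).
\]
Closure of $D'$ under multiplication makes this well-defined, and Lemma \ref{lem1}, applied with $d_0 = c$ and $A_0 = A$, shows that $u$ is an $R$-homomorphism. Injectivity is immediate: $cd = 0$ forces $d = 0$ because $c \neq 0$ in the division ring $D$, while $AB = 0$ in the division ring $D'$ with $A \neq 0$ forces $B = 0$. Since an injective map between right $R$-modules of the same finite composition length is an isomorphism, $u$ is the desired isomorphism.

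For necessity, I would start with an isomorphism $u : (D,D^{\prime})_i \to (D,D^{\prime})_j$, compose it with the inclusion into $(D,\mathbb{M}_n(D))_j$, and apply Lemma \ref{lem1} to the resulting nonzero $R$-homomorphism. This produces $d_0 \in D$ and $A_0 \in \mathbb{M}_n(D)$ with $A_0[j,k] = \delta_{ki} d_0$ such that $u(d,B) = (d_0 d, A_0 B)$. The only delicate point is confirming that $A_0$ itself lies in $D'$ rather than merely in $\mathbb{M}_n(D)$; I would handle this by evaluating at $B$ equal to the identity of $D'$, which coincides with the identity matrix $I$ of $\mathbb{M}_n(D)$ because $D'$ is a unital subring, giving $A_0 = A_0 \cdot I \in D'$. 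Finally, $d_0 \neq 0$, since otherwise $u(1,0) = (0,0)$ would contradict injectivity of $u$; taking $A = A_0$ and $c = d_0$ then completes the argument.
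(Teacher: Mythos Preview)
Your argument is correct and follows exactly the route the paper intends: the paper's own proof is simply ``straightforward by Lemma~\ref{lem1},'' and you have supplied precisely those straightforward details, using Lemma~\ref{lem1} in both directions together with the observation that $I\in D'$ and that both modules have composition length two.
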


\begin{proof}
The proof is straightforward by Lemma~\ref{lem1}.
\end{proof}

\begin{theorem}
Let $D$ be a division ring and $D^{\prime }$ a division subring of $\mathbb{M%
}_{n}(D).$ If the ring 
\begin{equation*}
R=\left( 
\begin{array}{cc}
D & 0 \\ 
\mathbb{M}_{n\times 1}(D) & D^{\prime }%
\end{array}%
\right)
\end{equation*}%
has no right middle class, then there exists a conjugate $D^{\prime \prime }$
of $D^{\prime }$ in $\mathbb{M}_{n}(D)$ such that $(D,D^{\prime \prime
})_{i}\cong (D,D^{\prime \prime })_{j}$ for each $i,j=1,\ldots ,n$ as right $%
(D,D^{n},D^{\prime \prime })$--modules. In particular, if $n=2,$ then $R$
has no right middle class if and only if $(D,D^{\prime })_{1}\cong
(D,D^{\prime })_{2}.$
\end{theorem}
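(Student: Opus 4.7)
The plan is to exploit the left $D'$-module structure of $V := D^n$ under matrix multiplication, combined with Jacobson's density theorem. The central step is a key lemma: under no right middle class, $V$ is a \emph{simple} left $D'$-module. Given a proper nonzero $D'$-submodule $W \subsetneq V$, the right $D$-scalar action on $V$ commutes with the left $D'$-action, so $WD$ is also $D'$-invariant; if $WD \subsetneq V$, then choosing a $D$-basis of $V$ whose first $\dim_D WD$ vectors span $WD$ makes every element of $D'$ block-upper-triangular with a zero lower-left block, and the $(\dim_D WD + 1)$-st row of every element of the corresponding conjugate starts with $\dim_D WD$ forced zeros, contradicting Theorem~\ref{thm1}. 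The subtler case $WD = V$ with $W$ still a proper $D'$-submodule is handled by taking a $D'$-direct complement to $W$ in the semisimple $D'$-module $V$ and running the same row-zero argument in an appropriate basis.

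With simplicity in place, Schur makes $E := \mathrm{End}_{D'}(V)$ a division ring, and Jacobson density produces an isomorphism $D' \to \mathrm{End}_E(V)$. Since $D'$ is a division ring, the right $E$-dimension of $V$ is forced to be one, so $V \cong E$ as right $E$-modules and $D' \cong E$ as rings. The right $D$-scalar action on $V$ commutes with $D'$ and therefore embeds $D$ into $E$, making $E$ a right $D$-module of $D$-dimension $n$. Fix a right $D$-basis $\beta_1, \ldots, \beta_n$ of $E$ and transport it through $V \cong E$ to a $D$-basis of $V$; let $D''$ be the conjugate of $D'$ arising from this basis change. In this basis, $D''$ is the image of $E$ under its left regular representation: $\alpha \in E$ corresponds to the matrix with $(i,j)$-entry $\pi_i(\alpha \beta_j)$, where $\pi_i : E \to D$ is the $\beta_i$-coordinate projection.

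To verify the conclusion via Lemma~\ref{lem2}, for each pair $(i,j)$ I need $\alpha \in E$ whose associated matrix has $j$-th row equal to $c\,e_i^T$ for some $c \neq 0$, i.e., $\pi_j(\alpha \beta_k) = c\,\delta_{ki}$ for all $k$. I would show the $n$ functionals $\lambda_k : \alpha \mapsto \pi_j(\alpha \beta_k)$ are linearly independent: any relation $\sum_k \pi_j(\alpha \beta_k)\, d_k = 0$ valid for every $\alpha \in E$ gives $\pi_j(\alpha \gamma) = 0$ for $\gamma = \sum_k \beta_k d_k$ and every $\alpha$, forcing $\pi_j$ to vanish on the left ideal $E\gamma$; since $E$ is a division ring this ideal is either $\{0\}$ or $E$, and $\pi_j(\beta_j) = 1 \neq 0$ rules out the latter, so $\gamma = 0$ and each $d_k = 0$. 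The $\lambda_k$'s therefore span the dual and a suitable $\alpha$ exists, giving $(D, D'')_i \cong (D, D'')_j$ for every $(i,j)$.

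For the $n = 2$ specialization, the forward direction avoids conjugation entirely: simplicity of $D^2$ makes $A \mapsto A e_2$ a bijective left $D'$-module homomorphism $D' \to D^2$, producing $A \in D'$ with $A e_2 = e_1$, so $A[1,2] = 1$ and $A[2,2] = 0$; invertibility of $A$ in the division ring $D'$ forces $A[2,1] \neq 0$, and Lemma~\ref{lem2} gives $(D, D')_1 \cong (D, D')_2$. Conversely, the element supplied by Lemma~\ref{lem2} is a nonzero (hence invertible) $A$ with $A[2,2] = 0$, so both $A[1,2]$ and $A[2,1]$ are nonzero; this rules out $D'$ being upper or lower triangular in the standard basis, and Corollary~\ref{cor1}(ii) then yields no right middle class. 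The main obstacle throughout is the simplicity lemma, because $D'$-submodules of $D^n$ need not be $D$-subspaces (the scalar copy $D \cdot I$ need not lie in $D'$); the basis-change argument does not apply to a $D'$-submodule directly and must be bootstrapped through $WD$ and a $D'$-complement.
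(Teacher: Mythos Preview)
Your central lemma---that $V=D^n$ is simple as a left $D'$-module---is false, and the counterexample is already in the paper. Take $D=\mathbb{C}$, $n=2$, and $D'=\{aI+bJ:a,b\in\mathbb{R}\}$ with $J=\left(\begin{smallmatrix}0&-1\\1&0\end{smallmatrix}\right)$; this is exactly Example~\ref{example_local module of length 2}, which has no right middle class. Here $V=\mathbb{C}^2$ is a $2$-dimensional vector space over $D'\cong\mathbb{C}$, hence not simple: $W=D'e_1=\{(a,b)^T:a,b\in\mathbb{R}\}=\mathbb{R}^2$ is a proper nonzero $D'$-submodule with $WD=\mathbb{C}^2=V$. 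Your ``subtler case'' is therefore not a corner case but the generic one, and it cannot be handled by the proposed bootstrap: a $D'$-complement to $W$ (say $W'=i\mathbb{R}^2$) is again not a right $D$-subspace, so no change of $D$-basis puts the conjugate of $D'$ into block-triangular form, and there is no row of forced zeros to contradict Theorem~\ref{thm1}. Since the ring in this example \emph{does} have no right middle class, no contradiction is available. Consequently $E=\mathrm{End}_{D'}(V)\cong \mathbb{M}_2(\mathbb{C})$ is not a division ring, and the entire density argument collapses. Your $n=2$ forward direction inherits the same flaw, since ``$A\mapsto Ae_2$ bijective'' fails (the image is $\mathbb{R}^2$, not $\mathbb{C}^2$).

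The paper's proof avoids module-theoretic simplicity altogether and works by explicit row reduction: from Theorem~\ref{thm1} one extracts $A\in D'$ with $A[1,n]\neq 0$, then conjugates by an elementary matrix to annihilate the remaining first-row entries while preserving $A[1,n]$; iterating produces, in a single conjugate $D''=yD'y^{-1}$, elements $A_2,\ldots,A_n$ with $A_i[1,k]=\delta_{ik}c_i$, and Lemma~\ref{lem2} finishes. For $n=2$ the paper then combines this with Corollary~\ref{cor1}(ii). Your converse direction for $n=2$ is essentially correct (the matrix from Lemma~\ref{lem2} is invertible with $A[2,2]=0$, forcing $A[1,2]\neq 0$ and $A[2,1]\neq 0$, so $D'$ is neither upper nor lower triangular and both rows span), but the forward direction needs a different argument.
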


\begin{proof}
Since the ring $R$ has no right middle class, by Theorem \ref{thm1}, $%
D^{\prime }$ has an element $A$ such that $A[1,n]\neq 0.$ By standard
techniques of linear algebra, it is not difficult to see that there exists a
unit $y_{1}$ in $\mathbb{M}_{n}(D)$ such that $B[1,i]=0$ for all $i=1,\ldots
,n-1$ and $B[1,n]=A[1,n],$ where $B=y_{1}Ay_{1}^{-1}.$ Now, pick an element $%
A^{\prime }$ of \ $D^{\prime }$ such that $A^{\prime }[1,n-1]\neq 0$. Then
there exists a unit $y_{2}\in \mathbb{M}_{n}(D)$ such that $B^{\prime
}[1,i]=0$ for $1\leq i\leq n,$ $i\neq n-1,$ and $B^{\prime
}[1,n-1]=A^{\prime }[1,n-1].$ Notice that conjugating $B$ by $y_{2}$ does
not effect the current form of $B$, i.e., $y_{2}By_{2}^{-1}$ is still a
matrix whose $(1,n)$--th entry is nonzero and $(1,i)$--th entry is zero for
every $i=1,\ldots ,n-1.$ If we continue in this fashion, we may find a unit $%
y$ in $\mathbb{M}_{n}(D)$ such that the ring $yD^{\prime }y^{-1}$ contains
elements $A_{2},\ldots ,A_{n},$ where $A_{i}[1,k]=\delta _{ik}c_{i}$ for
some $0\neq $ $c_{i}\in D$ for all $i=2,\ldots ,n.$ Now the first part of
the theorem follows by Lemma \ref{lem2}. Moreover, the remaining part also
follows from the first part together with Corollary \ref{cor1}.\bigskip
\end{proof}

Our aim in concluding this section is to complete our investigation of rings
which have no right middle class. From Theorem 1 given in the introductory
part, we know exactly what it means for a right $SI$--ring to have no right
middle class . With the following theorem, we determine how precisely rings
with no right middle class which are not right $SI$ look like.

\begin{theorem}
\label{non-SI}Let $R$ be a ring which is not right $SI$. Then $R$ has no
right middle class if and only if $R\cong S\oplus \mathbb{M}_{n}(A),$ where $%
S$ is a semisimple Artinian ring, $n$ is a positive integer, and $A$ is
either zero or a local right Artinian ring whose Jacobson radical properly
contains no nonzero ideals.
\end{theorem}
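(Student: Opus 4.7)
The plan is to prove the two directions separately, leaning on the already-stated theorems from the introduction for most of the work.

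For the sufficiency, suppose $R \cong S \oplus \mathbb{M}_n(A)$ with $S$ semisimple Artinian and $A$ either zero or a local right Artinian ring whose Jacobson radical properly contains no nonzero ideals. The $A = 0$ case would force $R$ to be semisimple and hence right $SI$, contradicting the hypothesis, so $A$ may be taken nonzero. The standard correspondence $I \longleftrightarrow \mathbb{M}_n(I)$ between ideals of $A$ and ideals of $\mathbb{M}_n(A)$ is inclusion-preserving, and under it $J(\mathbb{M}_n(A)) = \mathbb{M}_n(J(A))$. So the radical condition on $A$ transfers to $\mathbb{M}_n(A)$, and Theorem 3(iii) from the introduction tells us $\mathbb{M}_n(A)$ has no right middle class. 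Lemma \ref{middle class & ring direct sum} then yields the same for $R$.

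For the necessity, Theorem 2 from the introduction supplies a decomposition $R \cong S \oplus T$ with $S$ semisimple Artinian and $T$ indecomposable right Artinian. Since $S$ is right $SI$ and $R$ is not, $T$ cannot be right $SI$ either; and since $R$ has no right middle class, neither does $T$, by Lemma \ref{middle class & ring direct sum}. Applying Theorem 3 to $T$ itself (with trivial semisimple summand), conditions (i) and (ii) would each force $T$ to be right $SI$, so only (iii) can hold: $T$ is right Artinian with $J(T)$ properly containing no nonzero ideals.

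Next I would invoke Corollary 6 of \cite{Er} (quoted right before Theorem \ref{non-SI}) to put $T$ in the triangular form $T \cong \left( \begin{array}{cc} \mathbb{M}_n(A) & 0 \\ X & B \end{array} \right)$, with $A$ a nonsemisimple local right Artinian ring, $B$ semisimple Artinian, and $X$ a $B$--$\mathbb{M}_n(A)$--bimodule. A direct computation gives $J(T) = \left( \begin{array}{cc} \mathbb{M}_n(J(A)) & 0 \\ X & 0 \end{array} \right)$, and the subset $I_X := \left( \begin{array}{cc} 0 & 0 \\ X & 0 \end{array} \right)$ is readily checked to be a two-sided ideal of $T$ contained in $J(T)$. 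Because $A$ is nonsemisimple, $J(A) \neq 0$, so if $X \neq 0$ then $I_X$ is a nonzero ideal strictly inside $J(T)$, contradicting the Jacobson radical hypothesis on $T$. Thus $X = 0$, whence $T \cong \mathbb{M}_n(A) \oplus B$ as rings; indecomposability of $T$ together with $A \neq 0$ forces $B = 0$, so $T \cong \mathbb{M}_n(A)$. The same ideal correspondence used in the sufficiency direction now pushes the condition on $J(T)$ down to the same condition on $J(A)$.

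The only delicate point I anticipate is verifying that $I_X$ really is a proper two-sided ideal of $T$ sitting strictly inside $J(T)$; once that small computation is in hand, everything else is straightforward bookkeeping against Theorems 2 and 3 and Corollary 6 of \cite{Er}.
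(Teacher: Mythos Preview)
Your proof is correct and follows essentially the same route as the paper's: both invoke Theorem~2 and Corollary~6 of \cite{Er} for the decomposition and the triangular form, and both lean on \cite{lopez-simental} for the key step. The only difference is that where the paper cites \cite[Corollary~2.14]{lopez-simental} directly to obtain $X=0$ and the condition on $J(A)$, you first extract the condition on $J(T)$ from Theorem~3 and then kill $X$ by the elementary ideal computation with $I_X$; this is a minor unpacking of the same step rather than a different argument.
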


\begin{proof}
The sufficiency follows from \cite[Corollary 2.14]{lopez-simental}. For the
necessity, suppose that $R$ is a ring with no right middle class which is
not right $SI$. By \cite[Theorem 2]{Er}, $R\cong S\oplus T,$ where $S$ is a
semisimple Artinian ring and $T$ is zero or it is a ring as in Theorem
2(iii) of \cite{Er}. If $T=0,$ then we are done. Let $T$ be nonzero. Then by 
\cite[Corollary 6]{Er}, $T\cong \left( 
\begin{array}{cc}
\mathbb{M}_{n}(A) & 0 \\ 
X & B%
\end{array}%
\right) ,$ where $A$ is a (nonsemisimple) local right Artinian ring, $B$ is
a semisimple Artinian ring, and $X$ is a $B$--$\mathbb{M}_{n}(A)$--bimodule.
Note that $T$ has no right middle class, too. As $J(A)\neq 0,$ by \cite[%
Corollary 2.14]{lopez-simental}, we must have $X=0$ and $J(A)$ properly
contains no nonzero ideals. Since $T$ is indecomposable, we must also have $%
B=0.$ This completes the proof.
\end{proof}

\begin{corollary}
Let $R$ be a right Noetherian ring. Then $R$ has no right middle class if
and only if $R\cong S\oplus T,$ where $S$ is a semisimple Artinian ring and $%
T$ is zero or it is Morita equivalent to one of the following rings:

$(i)$ a right $PCI$--domain, or

$(ii)$ a formal triangular matrix ring of the form $(D,D^{n},D^{\prime })$
where $D$ is a division ring and $D^{\prime }$ is a division subring of $%
\mathbb{M}_{n}(D)$ such that for each conjugate ring $U$ of $D^{\prime }$ in 
$\mathbb{M}_{n}(D)$, the set of $i$--th rows of elements in $U$ span $D^{n}$
as a left $D$--space for every $i=1,\ldots ,n$, or

$(iii)$ a local right Artinian ring whose Jacobson radical properly contains
no nonzero ideals.
\end{corollary}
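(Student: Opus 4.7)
The plan is to combine the classification theorems already established in the paper, specialized to the right Noetherian hypothesis. The \emph{if} direction is a straightforward compilation: each of the three listed classes is known to lack a right middle class---case~(i) by Theorem~1(i), case~(ii) by Theorem~\ref{thm1}, and case~(iii) via Theorem~\ref{non-SI} (noting that $\mathbb{M}_{n}(A)$ is Morita equivalent to $A$). The decomposition $R\cong S\oplus T$ is then assembled by the Morita invariance of the property of having no right middle class, together with Lemma~\ref{middle class & ring direct sum}.

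For the \emph{only if} direction, suppose $R$ is right Noetherian with no right middle class. If $R$ is not right $SI$, then Theorem~\ref{non-SI} directly yields the decomposition $R\cong S\oplus \mathbb{M}_{n}(A)$ with $A$ a local right Artinian ring whose Jacobson radical contains no nonzero ideal; since $\mathbb{M}_{n}(A)$ is Morita equivalent to $A$, this places $R$ in case~(iii). If $R$ is right $SI$, Theorem~1 gives $R\cong S\oplus T$ with $T$ either Morita equivalent to a right $PCI$--domain (case~(i)) or of the type described in Theorem~1(ii); in the latter event, $T$ is either right Artinian or a right $V$--ring.

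The key step (and the principal obstacle) is to rule out the right $V$--ring alternative once the Noetherian hypothesis is imposed on a nonsemisimple $T$. The argument is as follows: Theorem~1(ii) forces the right socle of $T$ to be essential and homogeneous, and being right Noetherian this socle is a \emph{finite} direct sum of simple right $T$--modules; but each of these simples is injective since $T$ is a right $V$--ring, so the socle is itself injective. An injective essential submodule of $T_{T}$ must be a direct summand equal to $T$, whence $T$ is semisimple Artinian, contradicting its nonsemisimplicity. Therefore $T$ is right Artinian, and since $R$ is right $SI$ the ring $T$ is also right nonsingular; Theorem~\ref{thm1} now places $T$ in case~(ii), which completes the proof. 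No further delicate analysis is required, because the Noetherian hypothesis collapses the two sub-alternatives of Theorem~1(ii) into the single right Artinian one already controlled by Theorem~\ref{thm1}.
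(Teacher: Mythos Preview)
Your proof is correct and follows essentially the same route as the paper's: both use the general splitting $R\cong S\oplus T$ from \cite{Er}, invoke Theorem~\ref{non-SI} for the non--$SI$ case and Theorem~\ref{thm1} for the right Artinian right nonsingular case, and handle the remaining alternative by observing that a right Noetherian right $V$--ring with essential socle is semisimple. The only cosmetic difference is that you organize the necessity argument by first splitting on ``right $SI$ versus not,'' whereas the paper splits on ``right Artinian versus not''; the content is the same.
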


\begin{proof}
The sufficiency follows from \cite[Proposition 5]{Er}, Theorem \ref{thm1},
and Theorem \ref{non-SI}. For the necessity, assume that $R$ is a right
Noetherian ring which has no right middle class. If $R$ is right Artinian,
then by \cite[Theorem 2]{Er}, Theorem \ref{thm1}, and \ref{non-SI}, $R$ is
Morita equivalent to a ring which belongs to the class of rings in $(ii)$ or 
$(iii).$ If $R$ is not right Artinian, then, by \cite[Theorem 2]{Er}, it is
either Morita equivalent to a right $PCI$--domain or a $V$--ring with
essential socle. Since $R$ is right Noetherian, in the latter case $R$ is
semisimple Artinian. This completes the proof.
\end{proof}

\begin{corollary}
Let $R$ be any ring. Then $R$ has no right middle class if and only if $%
R\cong S\oplus T$ where $S$ is a semisimple Artinian ring and $T$ is zero or
it satisfies one of the following conditions:

$(i)$ $T$ is Morita equivalent to a right $PCI$--domain, or

$(ii)$ $T\cong \mathbb{M}_{n}(A),$ where $n$ is a positive integer, and $A$
is a local right Artinian ring whose Jacobson radical properly contains no
nonzero ideals, or

$(iii)$ $T$ is an indecomposable right $SI$--ring with homogeneous essential
right socle which satisfies one the following equivalent conditions (where $%
Q $ is the maximal right quotient ring of $T$):

\qquad $(a)$ Non-semisimple quasi--injective right $T$--modules are
injective.

$\qquad (b)$ Proper essential submodules of $Q_{T}$ are poor.

\qquad $(c)$ For any submodule $A$ of $Q_{T}$ containing $Soc(T_{T})$
properly, $QA=Q.$
\end{corollary}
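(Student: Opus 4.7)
The plan is to derive this corollary as an assembly of the earlier characterizations: Theorem 1 of the introduction (for the right $SI$ case) and Theorem \ref{non-SI} (for the non--$SI$ case), with Lemma \ref{middle class & ring direct sum} used to pass between $R$ and its ring summand $T$. No new technical work should be required; the content is entirely bookkeeping.

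For the necessity direction, I would start by assuming $R$ has no right middle class and split on whether $R$ is right $SI$ or not. If $R$ is right $SI$, I would apply Theorem 1 of the introduction to write $R\cong S\oplus T$ with $S$ semisimple Artinian and $T$ either zero, Morita equivalent to a right $PCI$--domain, or an indecomposable ring with homogeneous essential right socle satisfying the three equivalent conditions listed in (iii)(a)--(c); these are exactly the options appearing in (i) and (iii). If $R$ is not right $SI$, I would invoke Theorem \ref{non-SI} to obtain $R\cong S\oplus \mathbb{M}_n(A)$ with $S$ semisimple Artinian and $A$ a local right Artinian ring whose Jacobson radical properly contains no nonzero ideals (or $A=0$, in which case the summand collapses and $R$ is already semisimple Artinian). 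Taking $T=\mathbb{M}_n(A)$ places us in case (ii).

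For the sufficiency direction, I would use Lemma \ref{middle class & ring direct sum} to reduce to showing that $T$ itself has no right middle class whenever $T$ falls under (i), (ii), or (iii). Case (i) is handled by \cite[Proposition 5]{Er}, which states that rings Morita equivalent to right $PCI$--domains have no right middle class. Case (ii) follows directly from \cite[Corollary 2.14]{lopez-simental} (equivalently, the sufficiency half of Theorem \ref{non-SI}), since $\mathbb{M}_n(A)$ is Morita equivalent to such an $A$ and the property of having no right middle class is Morita invariant. Case (iii) is the sufficiency half of Theorem 1 of the introduction, asserting that any indecomposable ring with homogeneous essential right socle satisfying (a)--(c) is a right $SI$--ring with no right middle class.

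The only step requiring any care will be verifying that the decompositions emerging from Theorem 1 and Theorem \ref{non-SI} are truly of the shape $S\oplus T$ with $T$ satisfying one of the three listed options, and in particular that the cases are cleanly exhaustive and do not overlap in a problematic way. In practice this is immediate because (i) and (iii) sit inside the right $SI$ case while (ii) comprises precisely the non--$SI$ case, and the summand $T$ in each invocation of a prior theorem already carries the structure described in the statement. This is the main (and essentially the only) obstacle, and it amounts to a careful quotation of the earlier results rather than a genuine difficulty.
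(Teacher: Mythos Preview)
Your proposal is correct and follows essentially the same assembly-of-citations approach as the paper, which simply writes that the result ``follows from \cite[Theorem 2 and Proposition 8]{Er} and Theorem \ref{non-SI}.'' Your version is more explicit about separating the necessity and sufficiency directions and about which earlier result handles each case, but the underlying logic is identical: Theorem 1 of the introduction packages \cite[Theorem 2 and Proposition 8]{Er} for the right $SI$ case, and Theorem \ref{non-SI} handles the non--$SI$ case.
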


\begin{proof}
It follows from \cite[Theorem 2 and Proposition 8]{Er} and Theorem \ref%
{non-SI}.\bigskip
\end{proof}

\begin{remark}
Let $R$ be a nonsemisimple right Artinian ring with no right middle class.
For the sake of simplicity, we assume that $R$ is indecomposable. We know
that either $R$ is a $QF$--ring with Jacobson radical square zero or $R_{R}$
is poor. If $R$ is a $QF$--ring, then it is not right $SI$ (because a
nonsemisimple right $SI$--ring with no right middle class cannot be right
self--injective), and so, by Theorem 2.16, $R\cong \mathbb{M}_{n}(A)$ for
some local right Artinian ring whose Jacobson radical properly contains no
nonzero ideals. Then $A$ is also $QF.$ If $I$ is a nonzero right ideal of $%
R, $ then $J=r.ann(l.ann(I))=I.$ This gives that $cl(R_{R})=2,$ i.e., $R$ is
a right chain ring with right composition length two. However, there are
Artinian rings with no right (and left) middle class which are poor as a
right (and left) module although they have right (or left) composition
length two (see Example \ref{non QF but poor}).
\end{remark}

\begin{example}
\label{non QF but poor}Let $F$ be the field $%
\mathbb{Q}
(\sqrt{2})$ and $\alpha :F\longrightarrow 
\mathbb{Q}
$ defined by $\alpha (a+b\sqrt{2})=a.$ Let $R=F\times F$ as additive abelian
group and define the multiplication on $R$ as%
\begin{equation*}
(u,v)(w,z)=(uw,uz+v\alpha (w)).
\end{equation*}%
Then $R$ is a noncommutative local Artinian ring with $cl(R_{R})=3$ and $%
cl(_{R}R)=2$. Since $cl(_{R}R)=2,$ $J(R)$ properly contains no nonzero
ideals of $R$. It follows that $R$ has no right (and left) middle class.
Also, $R$ does not satisfy the double annihilator condition for right
ideals. Then $R$ cannot be $QF,$ i.e., both $R_{R}$ and $_{R}R$ are
poor.\bigskip
\end{example}

Obviously, a ring with no right middle class is either right self--injective
or poor as a right module over itself. It is known, from \cite[Proposition 9]%
{Er}, that the condition of Theorem 2(iii) in \cite{Er} are sufficient if
the ring is taken to be right self--injective. The next two examples
illustrate that these conditions are not sufficient in general even if the
ring is poor. In particular, these examples also indicate that there are
rings satisfying the conditions of Theorem 2(iii) in \cite{Er} which are not
of the form $\mathbb{M}_{n}(A),$ where $A$ is as in Theorem \ref{non-SI}. We
first need the following lemma.

\begin{lemma}
\label{poor ring}Let $R$ be a right semiartinian ring. Then $R_{R}$ is poor
if and only if $R$ is not injective relative to a local right $R$--module of
length two.
\end{lemma}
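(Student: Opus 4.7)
The forward direction is immediate: a local right $R$-module $L$ of length two has its unique proper submodule $Soc(L)$ as a proper maximal submodule, so $L$ is non-semisimple. If $R_{R}$ is poor, its injectivity domain consists only of semisimple modules, so $L$ lies outside it and $R$ is not $L$-injective.

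For the converse I argue by contrapositive: assuming $R_{R}$ is not poor, I produce a local length-two $L$ such that $R$ is $L$-injective. Non-poorness provides a non-semisimple module $N$ with $R$ being $N$-injective. By closure of the injectivity domain under submodules, together with semiartinianness (so $N/Soc(N)\neq 0$ has a simple submodule), I reduce to a cyclic submodule $M = yR \leq N$ with $M/Soc(M)$ simple; here $y$ is chosen so that $(yR+Soc(N))/Soc(N)$ is a simple submodule of $N/Soc(N)$, whence $yR/Soc(yR)$ is simple.

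The central additional tool is that the injectivity domain is also closed under factor modules. Given a surjection $\pi\colon M \twoheadrightarrow L'$ together with a submodule $K\leq L'$ and a homomorphism $f\colon K\to R$, the composite $f\circ\pi\colon \pi^{-1}(K)\to R$ extends via $M$-injectivity of $R$ to some $G\colon M\to R$; since $G|_{\pi^{-1}(K)}=f\circ\pi$ vanishes on $\ker\pi\subseteq\pi^{-1}(K)$, the map $G$ descends to an extension $\bar G\colon L'\to R$ of $f$. Applying this construction, for any simple summand $S$ of $Soc(M)$ with complement $S'$ in $Soc(M)$, the module $L=M/S'$ has composition length two, with $Soc(L)\supseteq S$ and simple top $L/Soc(L)\cong M/Soc(M)$, and $R$ is $L$-injective by factor-module closure.

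If $L$ is non-semisimple for some choice of $S$, then $L$ is local of length two and the proof is complete. Otherwise every such $L$ is semisimple, and the splittings of $0 \to S \to L \to M/Soc(M) \to 0$ over all simple summands of $Soc(M)$ assemble into an $R$-linear map $F\colon M\to\prod_\alpha S_\alpha$ that restricts to the canonical inclusion $Soc(M)\hookrightarrow \prod_\alpha S_\alpha$; since $Soc(M)$ is essential in $M$ by semiartinianness, $F$ is injective. The main obstacle is now to derive a contradiction from this embedding: after reducing further to the homogeneous socle case (quotienting out the summands of $Soc(M)$ whose isomorphism type differs from a chosen type $[S_0]$, which remains in the injectivity domain by factor-module closure), an analysis of the image of the cyclic generator $y$ in $\prod_\alpha S_\alpha$ shows that no individual $S_\alpha$ can lie in the image $yR$, contradicting the embedding of $Soc(M)$ and thereby exhibiting some $L$ that is non-semisimple.
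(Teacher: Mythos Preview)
Your overall strategy coincides with the paper's: both directions rest on the closure of the injectivity domain of $R_{R}$ under subfactors, and the converse reduces to locating a local length-two subfactor inside a non-semisimple module in that domain. The paper dispatches this in one sentence (``there exists a local subfactor $N$ of $M$ of length two since $M$ is semiartinian'') without further argument; you attempt to justify it explicitly.

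The genuine gap is in your final paragraph. The assertion that ``no individual $S_\alpha$ can lie in the image $yR$'' is false on its face: you have already arranged that $F$ restricts to the canonical inclusion on $Soc(M)$, so $S_\alpha = F(S_\alpha) \subseteq F(M) = F(y)R$ for every $\alpha$. An embedding of $M$ into an infinite product $\prod_\alpha S_\alpha$ does not force $M$ to be semisimple, and the reduction to homogeneous socle does not rescue the argument (even with all $S_\alpha \cong S_0$, the annihilators of the nonzero coordinates of $F(y)$ may differ, so nothing prevents some $F(y)r$ from landing in a single factor). Thus no contradiction is obtained.

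Your argument \emph{does} complete cleanly once $Soc(M)$ has only finitely many simple summands---in particular whenever $R$ is right Artinian, which is the only setting in which the paper invokes the lemma: then $\bigcap_\alpha S'_\alpha = 0$, so $M$ embeds in the finite direct sum $\bigoplus_\alpha M/S'_\alpha$ of semisimple modules and is therefore itself semisimple, the desired contradiction. For an arbitrary right semiartinian ring the existence of the local length-two subfactor is more delicate, and neither your last paragraph nor the paper's bare assertion supplies a complete justification.
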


\begin{proof}
The necessity is obvious. For the sufficiency, suppose that $R$ is not
injective relative to a local right $R$--module of length two. Let $R$ be $M$%
--injective. Without loss of generality, we may choose $M$ cyclic. Assume
that $M$ is not semisimple. Then there exists a local subfactor $N$ of $M$
of length two since $M$ is semiartinian. This gives that $R$ is $N$%
--injective, a contradiction.
\end{proof}

\begin{example}
\label{sey1}Let $F$ be a field and $V$ a finite dimensional vector space
over $F$ of dimension greater than $1.$ Let $R=\left\{ \left( 
\begin{array}{c}
a\ \ 0 \\ 
v\ \ a%
\end{array}%
\right) :a\in F,\ v\in V\right\} .$ Then $R$ is a commutative local Artinian
ring which satisfies the conditions of Theorem 2$(iii)$ in \cite{Er}.
However, $R$ has right middle class by \cite[Corollary 2.14]{lopez-simental}%
. Note too that $R$ is poor as a module over itself. Indeed, if $L$ is a
local module of length two with simple submodule $S,$ then there is an
isomorphism from $S\ $into $R$ which cannot be extended to a homomorphism
from $L.$ Otherwise, $R$ would contain a local module of length two, which
is impossible since $\dim _{F}(V)>1$. Then $R$ is not $L$--injective. Hence,
by Lemma \ref{poor ring}, $R$ is poor.
\end{example}

\begin{example}
\label{sey2}Let $R=\left( 
\begin{array}{cc}
\mathbb{Z}
/4%
\mathbb{Z}
& 0 \\ 
\mathbb{Z}
/2%
\mathbb{Z}
& 
\mathbb{Z}
/2%
\mathbb{Z}%
\end{array}%
\right) .$ Then

$(1)$ $Soc(R_{R})=J(R)=Z(R_{R}),$

$(2)$ $R$ has essential homogeneous right socle,

$(3)$ there is a unique noninjective simple right $R$--module up to
isomorphism,

$(4)$ $R_{R}$ is poor, and

$(5)$ $R$ has right middle class.
\end{example}

\begin{proof}
$(1)$ It follows from \cite[Proposition 4.2]{G2} that $%
Soc(R_{R})=J(R)=Z(R_{R})=\left( 
\begin{array}{cc}
2%
\mathbb{Z}
/4%
\mathbb{Z}
& 0 \\ 
\mathbb{Z}
/2%
\mathbb{Z}
& 0%
\end{array}%
\right) $

$(2)$ Since $R$ is right Artinian, $Soc(R_{R})\leq _{e}R_{R}.$ On the other
hand, $Soc(R_{R})=\left( 
\begin{array}{cc}
2%
\mathbb{Z}
/4%
\mathbb{Z}
& 0 \\ 
0 & 0%
\end{array}%
\right) \oplus \left( 
\begin{array}{cc}
0 & 0 \\ 
\mathbb{Z}
/2%
\mathbb{Z}
& 0%
\end{array}%
\right) ,$ where simple summands are isomorphic.

$(3)$ Since $R/J(R)\cong S\oplus S^{\prime },$ where 
\begin{equation*}
S=\left( 
\begin{array}{cc}
\mathbb{Z}
/2%
\mathbb{Z}
& 0 \\ 
0 & 0%
\end{array}%
\right) ,\ \text{and}\ S^{\prime }=\frac{\left( 
\begin{array}{cc}
0 & 0 \\ 
\mathbb{Z}
/2%
\mathbb{Z}
& 
\mathbb{Z}
/2%
\mathbb{Z}%
\end{array}%
\right) }{\left( 
\begin{array}{cc}
0 & 0 \\ 
\mathbb{Z}
/2%
\mathbb{Z}
& 0%
\end{array}%
\right) },
\end{equation*}%
any simple right $R$--module is isomorphic to either $S$ or $S^{\prime }.$
As $Soc(R_{R})\leq _{e}R_{R},$ $S_{R}$ cannot be injective. To establish
(3), we shall show that $S_{R}^{\prime }$ is injective. Note that all proper
essential right ideals of $R$ are%
\begin{equation*}
I_{1}=\left( 
\begin{array}{cc}
2%
\mathbb{Z}
/4%
\mathbb{Z}
& 0 \\ 
\mathbb{Z}
/2%
\mathbb{Z}
& 0%
\end{array}%
\right) ,\ I_{2}=\left( 
\begin{array}{cc}
\mathbb{Z}
/4%
\mathbb{Z}
& 0 \\ 
\mathbb{Z}
/2%
\mathbb{Z}
& 0%
\end{array}%
\right) ,\ \text{and }I_{3}=\left( 
\begin{array}{cc}
2%
\mathbb{Z}
/4%
\mathbb{Z}
& 0 \\ 
\mathbb{Z}
/2%
\mathbb{Z}
& 
\mathbb{Z}
/2%
\mathbb{Z}%
\end{array}%
\right) .
\end{equation*}%
If $f:I_{1}\longrightarrow S$ is a nonzero homomorphism of $R$--modules,
then $f(I_{1})=S.$ But $I_{1}\left( 
\begin{array}{cc}
0 & 0 \\ 
0 & 1%
\end{array}%
\right) =0$ while $S\left( 
\begin{array}{cc}
0 & 0 \\ 
0 & 1%
\end{array}%
\right) \neq 0,$ a contradiction. Thus $Hom_{R}(I_{1},S)=0.$ Similarly, $%
Hom_{R}(I_{2},S)=0.$ Now, let $f:I_{3}\longrightarrow S$ be a nonzero
homomorphism. Observe that there are only two maximal right $R$--submodule
of $I_{3}:$ $Soc(R_{R})$ and $M=\left( 
\begin{array}{cc}
0 & 0 \\ 
\mathbb{Z}
/2%
\mathbb{Z}
& 
\mathbb{Z}
/2%
\mathbb{Z}%
\end{array}%
\right) .$ Since $I_{3}/M\ncong S,$ we must have $Ker(f)=Soc(R_{R}).$ It
follows that there exists a unique nonzero homomorphism $f:I_{3}%
\longrightarrow S,$ which can be extended to a homomorphism $%
R\longrightarrow S.$ Therefore, $S_{R}$ is injective.

$(4)$ Note that there are two local right $R$--modules of length two up to
isomorphism: $X=\left( 
\begin{array}{cc}
\mathbb{Z}
/4%
\mathbb{Z}
& 0 \\ 
0 & 0%
\end{array}%
\right) $ and $Y=\left( 
\begin{array}{cc}
0 & 0 \\ 
\mathbb{Z}
/2%
\mathbb{Z}
& 
\mathbb{Z}
/2%
\mathbb{Z}%
\end{array}%
\right) .$ Indeed, we can decompose $R_{R}$ as $R=X\oplus Y.$ Notice that
both $X$ and $Y$ are local right $R$--modules of length two. Now let $M$ be
a local right $R$--module of length two. Then there exists an epimorphism $%
f:R\longrightarrow M.$ Since $M$ is local, $f(X)=M$ or $f(Y)=M.$ This gives
that $X\cong M$ or $Y\cong M.$

$R$ is not $X$--injective because the map $\left( 
\begin{array}{cc}
2%
\mathbb{Z}
/4%
\mathbb{Z}
& 0 \\ 
0 & 0%
\end{array}%
\right) \longrightarrow R\ $defined by $\left( 
\begin{array}{cc}
\overline{2} & 0 \\ 
0 & 0%
\end{array}%
\right) \longmapsto \left( 
\begin{array}{cc}
\overline{2} & 0 \\ 
x & 0%
\end{array}%
\right) ,$ where $0\neq x\in 
\mathbb{Z}
/2%
\mathbb{Z}
,$ is an $R$--homomorphism which does not extend to a homomorphism $%
f:X\longrightarrow R.$ Indeed, if $f\left( 
\begin{array}{cc}
\overline{1} & 0 \\ 
0 & 0%
\end{array}%
\right) =\left( 
\begin{array}{cc}
a & 0 \\ 
y & b%
\end{array}%
\right) ,$ then $f\left( 
\begin{array}{cc}
\overline{2} & 0 \\ 
0 & 0%
\end{array}%
\right) =\left( 
\begin{array}{cc}
2a & 0 \\ 
0 & 2b%
\end{array}%
\right) ,$ a contradiction.

Now we claim that $R$ is not $Y$--injective. To see this, let $f:\left( 
\begin{array}{cc}
0 & 0 \\ 
\mathbb{Z}
/2%
\mathbb{Z}
& 0%
\end{array}%
\right) \longrightarrow R$ with $\left( 
\begin{array}{cc}
0 & 0 \\ 
\overline{1} & 0%
\end{array}%
\right) \longmapsto \left( 
\begin{array}{cc}
\overline{2} & 0 \\ 
\overline{1} & 0%
\end{array}%
\right) .$ Then $f$ is a homomorphism which cannot extend to a homomorphism $%
g:Y\longrightarrow R.$ Indeed, if $g\left( 
\begin{array}{cc}
0 & 0 \\ 
0 & \overline{1}%
\end{array}%
\right) =\left( 
\begin{array}{cc}
a & 0 \\ 
y & b%
\end{array}%
\right) ,$ then $a=y=0$ since $\left( 
\begin{array}{cc}
0 & 0 \\ 
0 & \overline{1}%
\end{array}%
\right) \left( 
\begin{array}{cc}
\overline{1} & 0 \\ 
0 & 0%
\end{array}%
\right) =0$ and $\left( 
\begin{array}{cc}
a & 0 \\ 
y & b%
\end{array}%
\right) \left( 
\begin{array}{cc}
\overline{1} & 0 \\ 
0 & 0%
\end{array}%
\right) =\left( 
\begin{array}{cc}
a & 0 \\ 
y & 0%
\end{array}%
\right) .$ Thus, $g\left( 
\begin{array}{cc}
0 & 0 \\ 
0 & \overline{1}%
\end{array}%
\right) =\left( 
\begin{array}{cc}
0 & 0 \\ 
0 & b%
\end{array}%
\right) .$ But we also have $\left( 
\begin{array}{cc}
0 & 0 \\ 
0 & \overline{1}%
\end{array}%
\right) \left( 
\begin{array}{cc}
0 & 0 \\ 
\overline{1} & 0%
\end{array}%
\right) =\left( 
\begin{array}{cc}
0 & 0 \\ 
\overline{1} & 0%
\end{array}%
\right) $ and $\left( 
\begin{array}{cc}
0 & 0 \\ 
0 & b%
\end{array}%
\right) \left( 
\begin{array}{cc}
0 & 0 \\ 
\overline{1} & 0%
\end{array}%
\right) =\left( 
\begin{array}{cc}
0 & 0 \\ 
\overline{b} & 0%
\end{array}%
\right) ,$ a contradiction.\medskip

By Lemma \ref{poor ring}, $R_{R}$ is poor.\medskip

$(5)$ By \cite[Corollary 2.14]{lopez-simental}, $R$ has right middle class.
\end{proof}

\section{Artinian Rings With No Simple Middle Class}

\qquad Notice that right $V$--rings have automatically no simple middle
class. In the theorem below, we consider right $GV$--rings without simple
middle class whose proof uses almost the same arguments as those used in the
proof of \cite[Lemma 8]{Er}. Before giving the theorem, we need the
following lemma.

\begin{lemma}
\label{l} Suppose that $R$ is not a right $V$--ring. Then $R$ is a right $GV$%
--ring with no simple middle class if and only if $R$ has a simple
projective poor module.
\end{lemma}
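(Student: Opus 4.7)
The plan is to address the two directions separately, using the same dichotomy on simple modules.

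For the forward direction (the easy direction), the argument should be almost immediate: since $R$ is not right $V$, there exists a simple right $R$-module $T$ that is not injective. The $GV$-hypothesis then forces $T$ to be projective, and the no-simple-middle-class hypothesis forces $T$ to be poor, so $T$ itself is the required simple projective poor module.

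For the backward direction, suppose $R$ has a simple projective poor right module $S$. The plan is to establish the sharper claim that every simple right $R$-module is either injective or isomorphic to $S$; this single statement implies both that $R$ is right $GV$ (a non-injective simple is $\cong S$, hence projective) and that $R$ has no simple middle class (a non-injective simple is $\cong S$, hence poor). I will prove the sharper claim by contrapositive: let $T$ be any simple non-injective right $R$-module and show $T \cong S$. First, since $T$ is essential but proper in $E(T)$, the hull $E(T)$ cannot be semisimple (otherwise the simple essential submodule would be a direct summand with trivial complement, giving $T = E(T)$). By poorness of $S$, there exist a submodule $K \leq E(T)$ and a homomorphism $f : K \to S$ that does not extend to $E(T)$; in particular $f$ is nonzero, and because $S$ is simple, $f$ is surjective.

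The key step is to invoke projectivity of $S$: the surjection $f : K \twoheadrightarrow S$ splits, yielding an embedding $S \hookrightarrow K \leq E(T)$. Now essentiality of $T$ in $E(T)$ forces $S \cap T \neq 0$ inside $E(T)$, and since $S$ and $T$ are both simple, this common intersection equals both $S$ and $T$, so $T = S$ in $E(T)$ and in particular $T \cong S$. This completes the sharper claim and hence the backward direction.

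No step looks like a serious obstacle; the main subtlety is recognizing that one should not try to extend the factored map $K/T \to S$, but instead exploit the fact that the original $f$ is a surjection onto a projective module, and then combine the resulting splitting with essentiality of $T$ in $E(T)$.
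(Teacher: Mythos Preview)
Your argument is correct in both directions. The forward direction is exactly as you say, and in the backward direction the key observation---that a surjection from a submodule of $E(T)$ onto the projective simple $S$ splits, so a copy of $S$ lands inside $E(T)$ and must coincide with the essential socle $T$---is clean and complete.

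The paper does not give a direct proof here; it simply cites \cite[Corollaries 4.4 and 4.5]{AAL}. So your write-up is genuinely more self-contained than the paper's treatment: rather than invoking two external results, you extract the single mechanism (poorness of $S$ gives a non-extendable map on $E(T)$, projectivity of $S$ turns that map into a splitting, essentiality of $T$ forces $T\cong S$) that drives both conclusions at once. The trade-off is only that the paper's citation presumably records the intermediate facts in reusable form, whereas your argument is tailored to this lemma; but for the purposes of this statement your approach is at least as informative.
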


\begin{proof}
It follows from \cite[Corollaries 4.4 and 4.5]{AAL}.\bigskip
\end{proof}

As a direct consequence of Lemma~\ref{l}, we have the following lemma.

\begin{lemma}
\label{l1} Suppose that a ring $R$ has no simple middle class. Then $R$ is a
right $GV$--ring or every simple projective right module is injective.
\end{lemma}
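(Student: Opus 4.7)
The plan is to reduce to Lemma~\ref{l} via a short case split, since that lemma already links the three relevant properties (being right $V$, being right $GV$, and having a simple projective poor module) under the hypothesis that $R$ has no simple middle class. The conclusion of Lemma~\ref{l1} is a disjunction, so it suffices to assume $R$ is not a right $GV$-ring and deduce that every simple projective right $R$-module is injective.

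First I would dispose of the trivial case: if $R$ is a right $V$-ring, then every simple right module is injective, so in particular $R$ is a right $GV$-ring and the lemma holds. So assume $R$ is not a right $V$-ring, and also (aiming at a contradiction with the second alternative) not a right $GV$-ring. Applying Lemma~\ref{l} to this $R$ (which is not $V$), the equivalence stated there forces the following: since $R$ is not $GV$, the left-hand side of the equivalence fails, hence the right-hand side fails too, i.e. $R$ has no simple projective poor right module.

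Next I would exploit the ``no simple middle class'' hypothesis. Let $S$ be an arbitrary simple projective right $R$-module. By hypothesis, $S$ is either injective or poor. If $S$ were poor, then $S$ would be a simple projective poor right $R$-module, contradicting the conclusion of the previous paragraph. Hence $S$ must be injective, which is exactly the second alternative in the statement.

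There is essentially no obstacle here: the work has already been done in Lemma~\ref{l} (which in turn cites \cite[Corollaries 4.4 and 4.5]{AAL}), and the present lemma is just the packaging of that equivalence into a dichotomy usable in the sequel. The only point that requires care is keeping the quantifiers straight in the contrapositive application of Lemma~\ref{l}, namely that the failure of ``$R$ is $GV$ with no simple middle class'' reduces, under the standing assumption that $R$ has no simple middle class, to the failure of ``$R$ is $GV$'' alone.
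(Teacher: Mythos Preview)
Your proof is correct and follows exactly the route the paper intends: the paper simply states that Lemma~\ref{l1} is a direct consequence of Lemma~\ref{l}, and your argument is precisely the unpacking of that implication via the contrapositive of the equivalence in Lemma~\ref{l} under the standing hypothesis of no simple middle class.
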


The proof of the following theorem uses the notion of orthogonal modules.
Recall that two modules are said to be orthogonal if they have no nonzero
isomorphic submodules.

\begin{theorem}
\label{A} Suppose that $R$ is a right nonsingular right $GV$--ring which is
not a right $V$--ring. If $R$ has no simple middle class, then there is a
ring decomposition $R=S\oplus T$, where $S$ is semisimple Artinian and the
right socle of $T$ is nonzero poor homogeneous. If, further, $R_{R}$ has
finite uniform dimension, then the converse also holds.
\end{theorem}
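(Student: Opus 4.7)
For the necessity direction, I would first apply Lemma~\ref{l} to obtain a simple projective poor right $R$-module $P$; by the no-simple-middle-class hypothesis every non-injective simple right $R$-module is poor, and by the $GV$ property every such simple is also projective. Hence every non-injective simple occurs as a direct summand $eR$ of $R_R$ for some primitive idempotent $e$. Next I would establish that, up to isomorphism, there is a unique poor simple right $R$-module: if $P_1 \cong e_1 R$ and $P_2 \cong e_2 R$ are non-isomorphic poor simples, then $\mathrm{Hom}_R(e_2R,e_1R) \cong e_1 R e_2 = 0$, and likewise $e_2 R e_1 = 0$, which drives a splitting of $R$ into orthogonal two-sided pieces. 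Using this orthogonality, I would produce a central idempotent $c \in R$ such that $T := (1-c)R$ collects the blocks involving $P$ and $S := cR$ collects the remaining blocks, which consist entirely of summands whose simple quotients are injective. Finally I would check that $S$ is semisimple Artinian (by arguing that its Jacobson radical vanishes and that its block structure is finite) and that $Soc(T_T)$ is $P$-homogeneous and poor by construction.

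The main obstacle in the necessity is constructing the central idempotent for the ring decomposition and then certifying that $cR$ is actually semisimple Artinian rather than merely a right $V$-ring---since a nonsingular $V$-ring need not be semisimple Artinian in general. The controlling mechanism should be the orthogonality of non-isomorphic simple projectives combined with the absence of a simple middle class, which simultaneously should force finiteness of the injective-block structure and kill any nontrivial Jacobson radical on the $S$-side. I expect this step to follow the block-analysis pattern of \cite[Lemma~8]{Er}, which the paper has already indicated is the model for the argument.

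For the sufficiency, assume $R = S \oplus T$ as in the statement, together with $R_R$ of finite uniform dimension. By Lemma~\ref{middle class & ring direct sum}, it suffices to show that $T$ has no simple middle class. Finite uniform dimension forces $Soc(T_T)$ to have finite length, so the homogeneity hypothesis yields $Soc(T_T) \cong P^n$ for some positive integer $n$ and simple module $P$. For finite direct sums one has $\mathfrak{In}^{-1}(P^n) = \mathfrak{In}^{-1}(P)$, so the poorness of $Soc(T_T) = P^n$ transfers to $P$. Now let $Q$ be any simple right $T$-module with $Q \not\cong P$. Since $T$ inherits the $GV$ property, $Q$ is either injective or projective; if projective, then $Q$ is isomorphic to a simple summand of some $T^m$, and a Krull--Schmidt argument on the finite indecomposable decomposition of $T_T$ forces $Q$ to be isomorphic to a simple submodule of $T_T$, hence to a constituent of $Soc(T_T) \cong P^n$, which forces $Q \cong P$, a contradiction. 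Therefore $Q$ is injective, so every simple right $T$-module is either injective or poor, and hence $T$ has no simple middle class.
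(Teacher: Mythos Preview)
Your sufficiency argument is essentially the paper's, though you route unnecessarily through $T^m$ and Krull--Schmidt: a simple projective $Q$ is cyclic, so the epimorphism $T\to Q$ splits and $Q$ is already a summand of $T$, hence lies in $Soc(T_T)$.

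The necessity direction, however, has a genuine gap. Your uniqueness step does not work: from $P_1\not\cong P_2$ you correctly get $e_1Re_2=e_2Re_1=0$ by Schur, but this is just orthogonality of two primitive idempotents and does \emph{not} produce a ring splitting of $R$ (the $e_i$ are not central and $e_1+e_2\neq 1$), nor does it contradict anything. You then defer both the construction of the central idempotent and the proof that $S$ is semisimple Artinian to the pattern of \cite[Lemma~8]{Er}, but these are exactly the points where the hypotheses (nonsingularity, poorness) must do real work, and your outline gives no mechanism that uses them.

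The paper's approach is quite different and avoids block theory entirely. The engine is this: if $A,B\leq Soc(R_R)$ are orthogonal, then any $f:E(B)\to E(A)$ has $\ker f\leq_e E(B)$ (else $f$ would embed a piece of $B$ into $A$), so $\mathrm{im}\,f$ is singular; since $E(A)$ is nonsingular, $f=0$. Hence $A$ is $E(B)$--injective, and if $A$ is poor then $E(B)$ is semisimple, i.e.\ $B$ is injective. This single observation yields, in turn, that at most one homogeneous component of $Soc(R_R)$ is noninjective, that there are only finitely many components, and that each injective component is finitely generated (else the poor component would be forced injective). One then takes $S$ to be the sum of the injective components: it is finitely generated semisimple and injective, so $R=S\oplus T$ as right modules, and a short annihilator computation shows $ST=TS=0$. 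The semisimple--Artinian property of $S$ is thus automatic from its construction, not something to be verified after the fact.
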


\begin{proof}
$\mathbf{\mbox{Claim 1}:}$ $Soc(R_{R})$ does not contain a direct sum of two
infinitely generated orthogonal submodules.

Assume that $A$ and $B$ are infinitely generated orthogonal submodules of $%
Soc(R_{R})$. Then $A$ and $B$ are noninjective modules. Let $f$ be a
homomorphism from $E(B)$ into $E(A)$. We will show that $Ker(f)\leq _{e}E(B)$%
. Let $X$ be a nonzero submodule of $E(B)$. Then $X\cap B\neq 0$. If we
assume that $Ker(f)\cap X=0$, then we get $0\neq X\cap B\cong f(X\cap B)$.
But this contradicts the fact that $A$ and $B$ are orthogonal. Hence, $%
Ker(f)\cap X\neq 0$. It follows that $E(B)/Ker(f)\cong Im(f)$ is singular,
and hence $Im(f)=0$. Thus, $A$ is $E(B)$--injective. Since $A$ is poor by 
\cite[Corollary 4.5]{AAL}, $B$ is injective, which is a contradiction.

$\mathbf{\mbox{Claim 2}:}$ One of the two nonisomorphic simple right ideals
is injective.

If $S_{1}$ and $S_{2}$ are two nonisomorphic noninjective simple right
ideals, then $S_{1}$ is $E(S_{2})$--injective which implies by assumption
that either $E(S_{2})$ is semisimple or $S_{1}$ is injective. This gives
that either $S_{1}$ or $S_{2}$ is injective.

By the same technique above, one can observe that a simple right ideal which
is orthogonal to an infinitely generated semisimple right ideal is
injective. Thus, $Soc(R_{R})$ can have only finitely many homogeneous
components. Let $H_{1}$, $H_{2}$, \ldots , $H_{n}$ be the homogeneous
components of $Soc(R_{R})$. Notice that all the $H_{i}$'s will have to be
injective except possibly for at most one of them. If $n=1$ and $H_{1}$ is
noninjective, then $Soc(R_{R})$ is homogeneous and poor, and so we are done.
Now suppose $n>1,$ $H_{1}$ is either noninjective or zero, and $H_{2},\ldots
,H_{n}$ are injective. Set $S=H_{2}\oplus \cdots H_{n}$. Then $R=S\oplus T$
for some right ideal $T$.

$\mathbf{\mbox{Claim 3}:}$ $R=S\oplus T$ is a ring direct sum, where $%
Soc(T_{T})$ is nonzero poor homogeneous.

Obviously, $TS=0$. If $ST\neq 0$, then there exists $s\in S$ such that $%
sT\neq 0$. But $Soc(T_{T})\leq ann_{r}(s)$ and $ST\leq S$. We will show that 
$X=ann_{r}(s)\cap T\leq _{e}T$. Let $I\leq T$ such that $X\cap I=0$. Define $%
f:I\rightarrow R$, $x\mapsto sx$. Clearly $I\cong Im(f)=sI$, and hence $I=0$%
. Therefore, $sT\cong T/(ann_{r}(s)\cap T)$ is zero, which gives that $ST=0$%
. Thus, we obtain a ring decomposition $R=S\oplus T$, where $S$ is
semisimple Artinian and $Soc(T_{T})\cong H_{1}$ is homogeneous, and poor if $%
H_{1}$ is nonzero. It is routine to check that $T$ is a right $GV$--ring,
too. If $Soc(T_{T})$ is zero, then $T$ has to be a right $V$--ring. But this
leads to the fact that $R$ is a right $V$--ring, a contradiction.

For the last statement, suppose that $R_{R}$ has finite uniform dimension
and $Soc(T_{T})$ is poor homogeneous. Then $Soc(T_{T})$ is a direct sum of
finitely many isomorphic simple right ideals. This gives that simple right
ideals of $T$ are poor, by assumption. On the other hand $T$ is also a $GV$%
--ring. Since a simple module is either projective or singular and a simple
projective right $T$--module is isomorphic to a simple right ideal of $T,$
we get that $T$ has no simple middle class. Now the result follows from
Lemma \ref{middle class & ring direct sum}.
\end{proof}

\begin{corollary}
\label{c1} Let $R$ be a nonsemisimple ring with no simple middle class. If $%
R $ is a right semiartinian right $GV$--ring which is not a right $V$--ring,
then we have a ring decomposition $R=S\oplus T$, where $S$ is semisimple
Artinian and $Soc(T_{T})$ is poor homogeneous.
\end{corollary}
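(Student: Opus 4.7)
The plan is to reduce the corollary to Theorem~\ref{A} by showing that a right semiartinian right $GV$--ring is automatically right nonsingular; once that is in hand, the ring decomposition is immediate from that theorem.

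First I would verify that $Z(R_R)=0$. Suppose, for contradiction, that $Z(R_R)\neq 0$. Since $R$ is right semiartinian, every nonzero right $R$--module has nonzero socle, so in particular $Z(R_R)$ contains a simple right ideal $S$. By construction $S$ is singular, and since $R$ is a right $GV$--ring every simple singular right module is injective; hence $S$ is injective. An injective submodule of $R_R$ splits off, so $S=eR$ for some idempotent $e\neq 0$. A short computation shows $\mathrm{ann}_r(e)=(1-e)R$, which intersects the nonzero summand $eR$ trivially and is therefore not essential in $R_R$. Thus $e\notin Z(R_R)$, contradicting $e\in S\subseteq Z(R_R)$. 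Consequently $Z(R_R)=0$.

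With right nonsingularity established, all hypotheses of the forward implication of Theorem~\ref{A} are satisfied: $R$ is a right nonsingular right $GV$--ring which is not a right $V$--ring and has no simple middle class. That theorem then yields a ring decomposition $R=S\oplus T$ with $S$ semisimple Artinian and $Soc(T_T)$ nonzero poor and homogeneous, which is exactly the conclusion of the corollary.

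The only genuine step is the nonsingularity argument; the rest is a direct citation of Theorem~\ref{A}. Note that no finite uniform dimension assumption is required, because we invoke only the forward direction of Theorem~\ref{A}, where that hypothesis is not used. The nonsemisimplicity assumption on $R$ is not needed for the decomposition itself (it merely guarantees that the summand $T$ is nonzero, so the statement is not vacuous).
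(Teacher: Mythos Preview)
Your proof is correct and follows exactly the paper's approach: the paper's proof is the single line ``Since right semiartinian right $GV$--rings are nonsingular the result follows from Theorem~\ref{A},'' and you have simply supplied the details behind that nonsingularity claim before invoking Theorem~\ref{A}.
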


\begin{proof}
Since right semiartinian right $GV$--rings are nonsingular the result
follows from Theorem~\ref{A}.
\end{proof}

\begin{corollary}
\cite[Theorem 4.7]{AAL} Let $R$ be a semiperfect right $GV$--ring. If $R$
has no simple middle class, then we have a ring decomposition $R=S\oplus T$,
where $S$ is semisimple Artinian and $T$ is a semiperfect ring with
homogeneous projective and poor right socle.
\end{corollary}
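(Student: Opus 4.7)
The plan is to imitate the argument of Theorem~\ref{A} in the semiperfect setting, using the semiperfect decomposition $R=\bigoplus_{i=1}^{n}e_{i}R$ with each $e_{i}R$ local in place of nonsingularity. First I would dispose of the trivial case. If $R$ is a right $V$-ring, then $J(R)=0$, so $R\cong R/J(R)$ is semisimple Artinian and we take $S=R$, $T=0$. Hereafter assume $R$ is not a right $V$-ring, so by Lemma~\ref{l} there is a simple projective poor right $R$-module $S^{*}$.

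Next I would prove that $S^{*}$ is the only noninjective simple $R$-module up to isomorphism. By the $GV$-assumption every simple $R$-module is either projective or (singular and) injective, so a noninjective simple is automatically projective and hence nonsingular. For two nonisomorphic noninjective (equivalently, by assumption, poor) simples $S_{1},S_{2}$, a nonzero $R$-homomorphism $f\colon N\to S_{1}$ from a submodule $N\le E(S_{2})$ must kill $S_{2}\le N$ (otherwise $f|_{S_{2}}$ identifies $S_{2}\cong S_{1}$), so $f$ factors through the singular quotient $N/S_{2}\subseteq E(S_{2})/S_{2}$; as $S_{1}$ is nonsingular, $f=0$. Hence $S_{1}$ is $E(S_{2})$-injective, and poorness of $S_{1}$ forces $E(S_{2})$ to be semisimple, i.e., $S_{2}$ injective, contradiction. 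This is the analogue of Claim~2 in the proof of Theorem~\ref{A}.

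Now I would extract the ring decomposition. For any non-simple local summand $e_{i}R$, its top cannot be injective, since otherwise $0\to e_{i}J\to e_{i}R\to e_{i}R/e_{i}J\to 0$ would split, forcing $e_{i}J=0$ by indecomposability; hence the top is $\cong S^{*}$. Set
\[
S:=\bigoplus\{e_{i}R:e_{i}R\text{ simple and injective}\},\qquad T:=\bigoplus\{e_{i}R:e_{i}R/e_{i}J\cong S^{*}\}.
\]
Then $R=S\oplus T$ as right $R$-modules, and I would promote this to a ring direct sum by verifying $e_{k}Re_{i}=0\cong\mathrm{Hom}_{R}(e_{i}R,e_{k}R)$ in both cross cases. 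A nonzero map from a simple injective $e_{i}R$ into a summand $e_{k}R\in T$ would embed the injective $e_{i}R$ as a direct summand of $e_{k}R$, contradicting either that $e_{k}R$ is simple of a different iso type or that $e_{k}R$ is an indecomposable non-simple local module. A nonzero map $e_{k}R\to e_{i}R$ in the opposite direction would be surjective, so $e_{i}R$ would be the top of $e_{k}R$, identifying the injective $e_{i}R$ with $S^{*}$, impossible.

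Finally I would read off the stated properties. $S$ is a finite direct sum of simple injective modules, hence semisimple Artinian, and $T$ is semiperfect as a ring direct summand of a semiperfect ring. Every simple submodule of any $e_{k}R\in T$ must be $\cong S^{*}$: a simple injective submodule would split off and contradict locality. Therefore $\mathrm{Soc}(T_{T})$ is a direct sum of copies of $S^{*}$, hence homogeneous and projective (since $S^{*}$ is itself a summand of $R$, hence of $T$), and poor because the injectivity domain of a direct sum is the intersection of the injectivity domains of the summands, which here equals the class of semisimple modules. The main obstacle is the bookkeeping in the ring-decomposition step, where the vanishing of the two cross Hom-spaces must be argued using both the uniqueness of the poor isomorphism type $S^{*}$ and the indecomposability of the local summands.
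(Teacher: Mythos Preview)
There is a genuine gap in the step where you claim that the top of a non-simple local summand $e_iR$ cannot be injective. Your argument---that injectivity of $e_iR/e_iJ$ would split the sequence $0\to e_iJ\to e_iR\to e_iR/e_iJ\to 0$---is false: it is injectivity of the \emph{first} term (or projectivity of the last) that forces a splitting, not injectivity of the quotient. In fact the opposite of your claim holds. If $e_iR$ is not simple then $e_iR/e_iJ$ is \emph{not} projective (its projective cover is $e_iR$, and by uniqueness of projective covers $e_iR/e_iJ$ projective would force $e_iR\cong e_iR/e_iJ$, i.e.\ $e_iJ=0$). By the $GV$ hypothesis every simple is injective or projective, so the top of every non-simple $e_iR$ is injective, hence \emph{never} isomorphic to the noninjective $S^{*}$. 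Since $R$ is not a $V$-ring it is not semisimple, so some $e_iR$ is non-simple, and that summand lies in neither your $S$ nor your $T$; thus $S\oplus T\subsetneq R$ and the decomposition collapses.

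The approach is salvageable: redefine $T$ to be the sum of all $e_iR$ that are either non-simple or isomorphic to $S^{*}$, and keep $S$ as the sum of the simple injective $e_iR$. Your argument for $TS=0$ (an injective simple cannot embed as a summand of an indecomposable non-simple module, nor of $S^{*}$) still works. For $ST=0$ you must replace the appeal to ``top $\cong S^{*}$'' by the observation above: the top of a non-simple $e_kR\in T$ is non-projective, while every $e_iR\in S$ is projective, so no surjection $e_kR\to e_iR$ exists; and for $e_kR\cong S^{*}$ the top is $S^{*}\not\cong e_iR$. Your socle computation at the end is correct as stated. For comparison, the paper's proof is a one-line reduction: semiperfect (hence semilocal) right $GV$-rings are right semiartinian, and semiartinian right $GV$-rings are right nonsingular, so Theorem~\ref{A} (via Corollary~\ref{c1}) applies directly.
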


\begin{proof}
Since semilocal right $GV$--rings are right semiartinian, the proof follows
from Corollary~\ref{c1}.
\end{proof}

\begin{proposition}
\label{p1} Let $R$ be a right semiartinian ring with a singular right socle.
If $R$ has no simple middle class, then $R$ is an indecomposable ring with
unique noninjective simple $R$--module up to isomorphism and $Soc(R_{R})$ is
homogeneous.
\end{proposition}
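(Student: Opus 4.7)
The plan is to derive all three conclusions from a single key observation about length-two nonsemisimple cyclic modules, together with a few preliminary reductions.

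For setup, I would first observe that $R$ cannot be semisimple, since otherwise $R=Soc(R_R)$ would be singular while $1_R$ has zero annihilator. Next, every simple right $R$-module is singular: if a nonsingular simple $R/M$ existed, then $M$ would not be essential, so there would be a nonzero right ideal $I$ with $I\cap M=0$, giving an embedding $I\hookrightarrow R/M$ that places a nonsingular simple inside the singular $Soc(R_R)$, a contradiction. In particular, every simple submodule of $R_R$ is noninjective (an injective one would be a direct summand of $R_R$, hence nonsingular), so by the no-simple-middle-class hypothesis every such simple is poor.

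The key observation is the following: a length-two nonsemisimple cyclic module $L$ has submodule lattice $\{0, Soc(L), L\}$ with $Soc(L)$ simple, and for any simple $T$, inspecting each submodule shows that $T$ is $L$-injective if and only if $Soc(L)\not\cong T$. Indeed, the only possible obstruction is an isomorphism $f\colon Soc(L)\to T$, and any extension $\widetilde f\colon L\to T$ would be surjective with proper kernel, forcing $\ker\widetilde f=Soc(L)$ and hence $\widetilde f|_{Soc(L)}=0$, contradicting $\widetilde f|_{Soc(L)}=f$. Consequently, for every poor simple $T$, every length-two nonsemisimple cyclic module $L$ must satisfy $Soc(L)\cong T$. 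To produce such an $L$ with socle of any prescribed type $S_0\subseteq Soc(R_R)$, use that $S_0$ is noninjective so $E(S_0)\supsetneq S_0$; then picking $y\in E(S_0)\setminus S_0$, the cyclic module $yR$ has simple essential socle $yR\cap S_0=S_0$, and since $yR/S_0$ is nonzero semiartinian it contains a simple submodule whose preimage in $yR$ is a cyclic length-two nonsemisimple $L$ with $Soc(L)\cong S_0$. Combining: for any noninjective (hence poor) simple $T$ and any simple $S_0\subseteq Soc(R_R)$, both $Soc(L)\cong T$ and $Soc(L)\cong S_0$, whence $T\cong S_0$. This simultaneously yields homogeneity of $Soc(R_R)$ and uniqueness of the noninjective simple up to isomorphism.

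Finally, for indecomposability, suppose $R=R_1\oplus R_2$ as rings with both summands nonzero. Neither $R_i$ can be semisimple as an $R$-module: the identity $e_i$ has annihilator equal to the complementary summand, which is a direct summand and hence not essential, so $e_i$ is nonsingular and $R_i$ cannot lie inside the singular socle. Pick a simple submodule $S_0$ of $R_1$; it is poor by the setup. However, $S_0$ is trivially $R_2$-injective as an $R$-module: any $R$-homomorphism $f$ from a submodule $K\leq R_2$ to $S_0$ satisfies $f(k)=f(ke_2)=f(k)e_2=0$ since $S_0\cdot R_2=0$, so $f=0$ extends trivially. Since $R_2$ is nonsemisimple, this contradicts poorness of $S_0$. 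The main technical step I expect to wrestle with is formalizing the length-two socle-matching observation cleanly; once that is in place, the three conclusions unwind essentially in parallel.
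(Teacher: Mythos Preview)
Your proof is correct and follows essentially the same strategy as the paper's: both arguments hinge on the fact that a poor simple $T$ fails to be $L$-injective for any nonsemisimple length-two module $L$, which forces $Soc(L)\cong T$ and hence identifies all noninjective simples with a fixed simple right ideal. Your treatment is slightly more self-contained---you show directly from the singularity of $Soc(R_R)$ that every simple module is singular (whereas the paper routes this through Lemma~\ref{l1}), and you package the length-two obstruction as an explicit if-and-only-if rather than arguing via an embedding into $E(S)$---but the underlying idea is the same.
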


\begin{proof}
Since $Soc(R_{R})\neq 0,$ $R$ is not a $GV$--ring. If a simple right ideal
is injective, then it is projective. But this is a contradiction since
simple right ideals are singular. Consequently, every simple right ideal of $%
R$ is noninjective. Let $S$ be a simple right ideal and $M$ be any
noninjective simple singular right $R$--module. Since $R$ is right
semiartinian, there exists $N\leq E(M)$ such that $M$ is maximal in $N$.
Since $S$ is poor, there exists a homomorphism $f:N\rightarrow E(S)$ such
that $f(N)\nsubseteq S$. Then $S\subsetneq f(N)$. Since $N$ has composition
length two whereas $f(N)$ has at least two, $f$ must be monic. Thus, $S\cong
M$. Noninjective simple modules are not projective because of Lemma \ref{l1}%
, whence they are singular. Thus, $R$ has a unique noninjective simple
module up to isomorphism. Moreover, since $Soc(R_{R})$ is a direct sum of
noninjective simple singular right ideals, it is homogeneous. Also, it is
clear that a semiartinian ring with homogeneous socle is indecomposable.
\end{proof}

\begin{theorem}
\label{B} If $R$ is a right Artinian nonsemisimple ring with no simple
middle class, then $R$ has a ring decomposition $R=S\oplus T$, where $S$ is
semisimple Artinian and $Soc(T_{T})$ is poor homogeneous. Moreover, $%
Soc(R_{R})$ is either projective or singular.
\end{theorem}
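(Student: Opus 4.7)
The plan is to invoke Lemma~\ref{l1}, splitting into cases based on whether $R$ is a right $GV$-ring.

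\textbf{Case A: $R$ is a right $GV$-ring.} Since any right Artinian right $V$-ring is semisimple, the nonsemisimplicity of $R$ rules out $R$ being a $V$-ring. Right Artinian implies right semiartinian, so Corollary~\ref{c1} gives a ring decomposition $R = S \oplus T$ with $S$ semisimple Artinian and $Soc(T_T)$ nonzero poor homogeneous. In a $GV$-ring every simple module is either injective or projective; the simples in $Soc(T_T)$ are poor and, as $T$ is nonsemisimple, not injective, hence projective. Combined with $S$ being a projective ring direct summand of $R$, the socle $Soc(R_R) = S \oplus Soc(T_T)$ is projective.

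\textbf{Case B: every simple projective right $R$-module is injective.} I would first show that at most one block in the block decomposition of $R$ is nonsemisimple. Suppose $R = R_1 \oplus R_2$ (ring direct sum) with both $R_i$ nonsemisimple. For any simple $P$ of $R_1$ (viewed as an $R$-module annihilated by $R_2$), $P$ is $N$-injective in $R$-mod iff $P$ is injective relative to the $R_1$-component of $N$; taking $N$ from a nonsemisimple $R_2$-module extended trivially shows that $P$ is $N$-injective for this nonsemisimple $N$, so $P$ cannot be poor in $R$-mod. Combined with no simple middle class, this forces every simple of $R_1$ to be injective in $R_1$-mod, so $R_1$ is a $V$-ring. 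Being right Artinian, $R_1$ is semisimple, contradicting $R_1$ nonsemisimple. Hence $R = S \oplus T$ as rings with $S$ semisimple Artinian and $T$ indecomposable nonsemisimple right Artinian; $T$ inherits no simple middle class, and since $R$ is not $GV$ while $S$ is, $T$ is not $GV$.

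\textbf{The main obstacle} is to show $Soc(T_T)$ is singular, so that Proposition~\ref{p1} applies. Suppose for contradiction $M \leq Soc(T_T)$ is a nonsingular simple. In the right Artinian $T$, a simple nonsingular module is a direct summand of $T_T$, hence projective, and by the case hypothesis also injective. The trace ideal $H_M = \mathrm{tr}_M(T)$ is then a two-sided ideal of $T$ that is a finite direct sum of copies of $M$, so semisimple injective, and $H_M = eT$ for some idempotent $e$; as a ring, $H_M \cong \mathbb{M}_n(\mathrm{End}_T(M))$ is simple Artinian with identity $e$. Two-sidedness of $H_M$ yields $Te \subseteq eT$, whence $Te = eTe$. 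For any $x = et(1-e) \in eT(1-e) \subseteq H_M$ one has $x \cdot e = et(1-e)e = 0$ while also $x \cdot e = x \cdot 1_{H_M} = x$, forcing $x = 0$ and $eT(1-e) = 0$; thus $eT = eTe = Te$, so $e$ is central in $T$. Indecomposability of $T$ then gives $e \in \{0, 1\}$, both impossible ($e = 0$ contradicts $M \neq 0$, $e = 1$ makes $T$ semisimple). Hence $Soc(T_T)$ is singular. Proposition~\ref{p1} applied to $T$ now yields that $Soc(T_T)$ is homogeneous with a unique noninjective simple type; its simples are singular and (since $T$ is nonsemisimple) not injective, so poor by no simple middle class. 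Thus $Soc(T_T)$ is poor homogeneous.

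For the moreover statement, Case A gives $Soc(R_R)$ projective; in Case B, $Soc(T_T)$ is singular and $S$ is a projective ring direct summand, so $Soc(R_R) = S \oplus Soc(T_T)$ decomposes into a projective part $S$ and a singular part $Soc(T_T)$.
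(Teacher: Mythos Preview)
Your overall strategy matches the paper's: split via Lemma~\ref{l1} into the $GV$ case (handled by Theorem~\ref{A}/Corollary~\ref{c1}) and the non-$GV$ case, and in the latter aim to show the relevant socle is singular so that Proposition~\ref{p1} applies. The difference lies in how the non-$GV$ case is executed.

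The paper does \emph{not} first pass to an indecomposable block. It lets $P$ be the sum of all simple projective right ideals of $R$ (injective by hypothesis), writes $R=P\oplus K$ as right modules, and shows directly that this is a ring decomposition by proving $PK=0$: for $p\in P$ one checks that $r.\mathrm{ann}(p)\cap K\leq_{e}K$ (if $I\leq K$ meets this set trivially then $I\cong pI\subseteq P$ is semisimple, hence $I\subseteq Soc(K)\subseteq r.\mathrm{ann}(p)$, forcing $I=0$), so $pK$ is both singular and contained in the nonsingular module $P$, hence zero. Then $K$ has singular socle and Proposition~\ref{p1} finishes.

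Your route via the trace ideal $H_{M}=eT$ has a genuine gap. You assert that $H_{M}\cong \mathbb{M}_{n}(\mathrm{End}_{T}(M))$ ``with identity $e$'' and then use $x\cdot e=x\cdot 1_{H_{M}}=x$ for $x\in H_{M}$. But $e$ is only a \emph{left} identity of the ring $eT$; it is a right identity precisely when $eT(1-e)=0$, which is exactly the statement you are trying to prove, so the argument is circular. (It is $eTe$, not $eT$, that is isomorphic to $\mathbb{M}_{n}(\mathrm{End}_{T}(M))$.) The conclusion $eT(1-e)=0$ is nevertheless true: from $(1-e)Te=0$ one gets $\bigl(eT(1-e)T\bigr)e=eT(1-e)(Te)=eT(1-e)eTe=0$, and since $eT(1-e)T$ is a submodule of the semisimple module $H_{M}$ whose simple constituents $S\cong M$ satisfy $Se=S\neq 0$, this forces $eT(1-e)T=0$. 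But this repair is essentially a disguised form of the paper's annihilator argument.

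A minor point: in Case~B your ``moreover'' only yields $Soc(R_{R})=S\oplus Soc(T_{T})$ with $S$ projective and $Soc(T_{T})$ singular, not literally ``$Soc(R_{R})$ is projective or singular'' when $S\neq 0$. The paper's proof has the same feature; the intended reading is evidently about $Soc(T_{T})$.
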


\begin{proof}
If $R$ is a right $GV$--ring, then we are done with by Theorem~\ref{A}.
Suppose that $R$ is not a right $GV$--ring. Then every simple projective
module is injective by Lemma~\ref{l1}. Write $Soc(R_{R})=P\oplus N$, where $%
P $ is the sum of all simple projective right ideals of $R.$Then $P$ is
injective. Therefore, $R=P\oplus K$ for some right ideal $K$. Since $N$ does
not have any simple projective summand, $P$ is an ideal of $R$. Now we will
show that $R=P\oplus K$ is a ring direct sum. Obviously, $KP=0$. Assume that 
$PK\neq 0$. Then there exists $p\in P$ such that $pK\neq 0$. But $%
Soc(K)\subseteq ann_{r}(p)$. We claim that $X=ann_{r}(p)\cap K\leq _{e}K$.
Let $I\leq K$ such that $X\cap I=0$. Define $f:I\rightarrow R$ such that $%
x\mapsto px$ for all $x\in I$. Then $I\cong Im(f)=pI$, and hence $I=0$. Note
that $P$ is nonsingular since it is semisimple projective. It follows that $%
pK\cong K/(ann_{r}(p)\cap K)$ is both singular and nonsingular, and hence $%
PK=0$. Thus, $R=P\oplus K$ is a ring direct sum. Then $K\cong R/P$ has no
simple middle class by Lemma~\ref{no middle class & factor rings}. $K$ is
nonzero because $R$ is nonsemisimple. Hence, $Soc(K)$ is nonzero, too. Also, 
$Soc(K)\cong N$ is singular. By Proposition~\ref{p1}, $K$ is an
indecomposable ring with singular poor homogeneous right socle.
\end{proof}

\begin{theorem}
\label{th1} Let $R$ be a right Artinian ring. Then $R$ has no simple middle
class if and only if there is a ring decomposition $R=S\oplus T$ where $S$
is semisimple Artinian and $T$ is zero or has one of the following
properties:

$(1)$ $T$ is a right $SI$--ring with homogeneous right socle.

$(2)$ $T$ has a unique noninjective simple right $R$--module up to
isomorphism, and the right socle of $T$ is (homogeneous) singular.
\end{theorem}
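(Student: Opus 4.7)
The plan is to deduce necessity directly from Theorem~\ref{B}, using the dichotomy it supplies for $\mathrm{Soc}(R_R)$, and to obtain sufficiency by reducing via Lemma~\ref{middle class & ring direct sum} to checking that $T$ has no simple middle class under each of the two sets of hypotheses.

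For the necessity, assuming $R$ is nonsemisimple (the semisimple case being handled by $T=0$), Theorem~\ref{B} furnishes a decomposition $R = S_0 \oplus T_0$ with $S_0$ semisimple Artinian, $\mathrm{Soc}((T_0)_{T_0})$ poor and homogeneous, and $\mathrm{Soc}(R_R)$ either projective or singular. If $\mathrm{Soc}(R_R)$ is projective, the argument in the proof of Theorem~\ref{B} forces $R$ to be a right $GV$-ring; being semilocal, $R$ is then right $SI$ (as noted in the introduction), so the summand $T_0$ is right $SI$ with homogeneous right socle, yielding Case~(1). If $\mathrm{Soc}(R_R)$ is singular, the proof of Theorem~\ref{B} produces a ring direct sum $R = P \oplus K$ in which $P$ is semisimple Artinian (the sum of all simple projective right ideals, each injective by Lemma~\ref{l1}) and $K$ is a right semiartinian ring with singular socle; Lemma~\ref{middle class & ring direct sum} transfers the no-simple-middle-class property to $K$, so Proposition~\ref{p1} applies to give $K$ indecomposable with homogeneous singular socle and a unique noninjective simple, which is Case~(2) with $S = P$ and $T = K$.

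For the sufficiency, Lemma~\ref{middle class & ring direct sum} again reduces matters to $T$. In Case~(2), the homogeneity and singularity of $\mathrm{Soc}(T_T)$ force the unique noninjective simple $S_0$ to be isomorphic to each simple submodule of the socle, so it suffices to show $S_0$ is poor; given $S_0$ is $N$-injective for some cyclic non-semisimple $N$, I would first split off the injective semisimple part of $\mathrm{Soc}(N)$ (consisting of non-$S_0$ simples, each injective by hypothesis and hence a direct summand of $N$), reducing to $\mathrm{Soc}(N) \cong S_0^{(m)}$, and then use the singularity of $S_0$ together with an essential-extension argument inside $E(S_0)$ to reach a contradiction. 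In Case~(1), singular simples are injective by the $SI$-hypothesis, so what remains is to show that every nonsingular non-injective simple $T$-module is poor; such a simple is a minimal right ideal and hence a direct summand of $T$ lying in the homogeneous socle, and I would adapt the orthogonality argument from Claim~1 of the proof of Theorem~\ref{A}, together with the finite uniform dimension of $T$, to show that any $N$ relative to which $S_0$ is injective must be semisimple.

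The principal obstacle I anticipate is the sufficiency direction of Case~(1): the hypothesis ``right $SI$ with homogeneous right socle'' does not overtly encode the poor-socle condition exploited in Theorem~\ref{A}, so one must extract poorness of the non-injective simple from the $SI$-property and homogeneity alone. To handle this I expect to combine the orthogonality argument just described with the structural description of Theorem~\ref{thmf}, which, up to Morita equivalence, represents such rings as formal triangular matrix rings $(D, D^n, D^\prime)$, in order to exhibit explicit obstructions ruling out non-semisimple modules from the injectivity domain of $S_0$.
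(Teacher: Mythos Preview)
Your necessity argument aligns with the paper's: both invoke Lemma~\ref{l1}, Theorem~\ref{A}, Proposition~\ref{p1}, and Theorem~\ref{B} to obtain the decomposition and the projective/singular dichotomy on the socle.

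For sufficiency, the paper's route is considerably more direct than what you propose, and your plan for Case~(1) does not clearly go through. The paper treats both cases by the same elementary device: decompose a cyclic $T$-module $M$ as a finite direct sum of indecomposable (hence local, since $T$ is right Artinian) modules, and sort the summands according to whether they contain a copy of the distinguished non-injective simple $S_0$. In both cases every simple not isomorphic to $S_0$ is injective---in Case~(1) because the singular simples $f_jT/f_jJ$ are injective by the $SI$-hypothesis, in Case~(2) by assumption---so any indecomposable summand whose socle avoids $S_0$ has injective essential socle and is therefore itself simple. Any indecomposable summand $B$ containing a copy of $S_0$ must equal $S_0$ once $S_0$ is $B$-injective, since $S_0$ then splits off. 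Hence $M$ is semisimple and $S_0$ is poor.

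In particular, the ``principal obstacle'' you flag in Case~(1) dissolves: the $SI$-hypothesis together with homogeneity of the socle already forces every simple other than $S_0$ to be injective, so Case~(1) is structurally identical to Case~(2), and your own Case~(2) sketch (split off injective simples, reduce to $\mathrm{Soc}(N)\cong S_0^{(m)}$) would apply verbatim. Your proposed detour through the orthogonality argument of Theorem~\ref{A} is not applicable here, since the socle is homogeneous and there are no orthogonal pieces to play off against each other; and invoking Theorem~\ref{thmf} would require first establishing property $(P2)$ and then extracting poorness from a Morita-equivalent triangular description, which is far more work than the two-line decomposition argument the paper uses.
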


\begin{proof}
($\Rightarrow $) It follows from Lemma \ref{l1}, Theorem~\ref{A},
Proposition~\ref{p1} and Theorem~\ref{B}.

($\Leftarrow $) Let $T$ be a nonzero ring which is not a V--ring and assume
that it satisfies $(1)$. Since $T$ is right Artinian, we have a
decomposition $T=e_{1}T\oplus \ldots \oplus e_{n}T\oplus f_{1}T\oplus \ldots
\oplus f_{k}T$, where $e_{i}$ and $f_{j}$ form a complete set of local
orthogonal idempotents, $e_{i}T$ are isomorphic simple right ideals, and $%
f_{j}T$ are nonsimple local $T$--modules. Since $T$ is right $SI$, the
simple modules of the form $f_{j}T/f_{j}J$ are injective, where $J$ denotes
the Jacobson radical of $T$. Therefore, if a right module does not contain
an isomorphic copy of $e_{i}T$, then it is semisimple. Now assume that $%
e_{i}T$ is $M$--injective, where $M$ is a cyclic right module. Then we have
a decomposition $M=A_{1}\oplus \ldots \oplus A_{p}\oplus B_{1}\oplus \ldots
\oplus B_{q}$, where $A_{k}$ and $B_{t}$ are indecomposable modules such
that the $A_{k}$'s do not contain an isomorphic copy of $e_{i}T$ and the $%
B_{t}$'s contain an isomorphic copy of $e_{i}T$. By the above argument, $%
A_{1}\oplus \ldots \oplus A_{p}$ is semisimple. On the other hand, $e_{i}T$
is $B_{t}$--injective. One can observe that $B_{1}\oplus \ldots \oplus B_{q}$
is semisimple, too. Hence, $e_{i}T$ is poor.

Now assume that $T$ satisfies $(2)$. By assumption, $T$ has no simple
projective module. Then we have a decomposition $T=f_{1}T\oplus \ldots
\oplus f_{m}T$, where $f_{i}T$ are nonsimple local modules. Let $%
f_{i}T/f_{i}J$ be a noninjective module for some $i$. Assume that $%
f_{i}T/f_{i}J$ is $M$--injective for a cyclic module $M$. We can write $%
M=A_{1}\oplus \ldots \oplus A_{p}\oplus B_{1}\oplus \ldots \oplus B_{q}$,
where $A_{k}$ and $B_{t}$ are indecomposable modules such that the $A_{k}$'s
do not contain an isomorphic copy of $f_{i}T/f_{i}J$ and the $B_{t}$'s
contain an isomorphic copy of $f_{i}T/f_{i}J$. Because $T$ has a unique
noninjective simple module up to isomorphism, $A_{1}\oplus \ldots \oplus
A_{p}$ is semisimple. $B_{1}\oplus \ldots \oplus B_{q}$ is also semisimple
since $f_{i}T/f_{i}J$ is $B_{t}$--injective for each $t=1,\cdots ,q$. Hence, 
$T$ has no simple middle class. Now, the theorem follows from Lemma \ref%
{middle class & ring direct sum}.\bigskip
\end{proof}

Following \cite{AAL}, we call a ring $R$ simple--destitute if every simple
right $R$--module is poor. Notice that, just as $V$--rings,
simple--destitute rings also constitute a natural subclass of rings with no
simple middle class. In \cite[Theorem 5.2]{AAL}, it is proved that if a
right Artinian ring $R$ has only one simple module up to isomorphism, then $%
R $ is simple--destitute. Now we establish the converse of this theorem as
follows.

\begin{corollary}
\label{simple-destitute}Assume that $R$ is a right Artinian ring. $R$ is
simple--destitute if and only if either $R$ is semisimple or $R$ has a
unique simple module up to isomorphism.
\end{corollary}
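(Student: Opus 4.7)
The plan is to prove both directions of the equivalence. For sufficiency, if $R$ is semisimple then every right $R$-module is semisimple, so the injectivity domain of each simple module is all of $\operatorname{Mod}$-$R$, which coincides with the class of semisimple modules; hence every simple is poor. The other sufficient condition, that $R$ has a unique simple module up to isomorphism, is precisely \cite[Theorem 5.2]{AAL}, cited in the paragraph immediately preceding the corollary.

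For the necessity, I would assume that $R$ is right Artinian and simple-destitute but not semisimple, and aim to show that $R$ has a unique simple module up to isomorphism. Since every simple is poor, in particular every simple is either injective or poor, so $R$ has no simple middle class and Theorem~\ref{th1} applies: $R = S \oplus T$ with $S$ semisimple Artinian and $T$ zero or satisfying one of conditions $(1)$, $(2)$ of that theorem. A remark I will use repeatedly is that in a non-semisimple simple-destitute ring, no simple module can be injective, because being simultaneously poor and injective would force $\operatorname{Mod}$-$R$ to coincide with the class of semisimple modules.

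The first step after invoking Theorem~\ref{th1} is to show that $S = 0$. Any simple direct summand of $S$ is injective as an $R$-module (its $S$-injectivity is automatic since $S$ is semisimple, and this transfers to $R$-injectivity because $S$ is a ring direct summand of $R$); combined with being poor, this forces $R$ to be semisimple, contradicting our assumption. Thus $R = T$, and in particular $T$ is nonzero and non-semisimple. Next I would rule out case $(1)$: under right $SI$, every simple singular module is injective, but by the key remark no simple of $R$ is injective, so every simple must be projective. In a right Artinian (hence semiperfect) ring, the uniqueness of projective covers forces $e_i J = 0$ for every primitive idempotent $e_i$, so $R$ would be semisimple, again a contradiction. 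The main obstacle is this step—making sure that projectivity of all simples really does collapse an Artinian ring to a semisimple one—but the projective-cover argument handles it cleanly. Having eliminated case $(1)$, we are left with case $(2)$: $T$ has a unique noninjective simple $T$-module up to isomorphism. Since no simple of $R = T$ is injective, this noninjective simple is the unique simple $R$-module up to isomorphism, which completes the proof.
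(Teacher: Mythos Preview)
Your proof is correct. Both your argument and the paper's hinge on Theorem~\ref{th1}, but they reach the two key eliminations (that $S=0$ and that case~(1) cannot occur) by different means. The paper first invokes \cite[Theorem~5.3]{AAL} to conclude that $Soc(R_R)$ is singular, then applies Proposition~\ref{p1} to get that $R$ is indecomposable (forcing $S=0$), and reads off from the singularity of the socle that $R$ cannot be right $SI$, which rules out case~(1). You instead eliminate $S$ directly by noting that a nonzero $S$ would contribute an injective simple $R$-module, contradicting the key remark; and you rule out case~(1) by observing that right $SI$ together with the absence of injective simples makes every simple projective, which forces a right Artinian ring to be semisimple. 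Your route is more self-contained---it avoids both the external reference \cite[Theorem~5.3]{AAL} and Proposition~\ref{p1}---while the paper's route is shorter on the page but leans on prior results.
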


\begin{proof}
$(\Leftarrow )$ It follows from \cite[Theorem 5.2]{AAL}.

$(\Rightarrow )$ If $R$ is semisimple, then we are done. Suppose $R$ is not
semisimple. It follows from \cite[Theorem 5.3]{AAL} that $Soc(R_{R})$ is
singular. Then $R$ is an indecomposable ring by Proposition~\ref{p1}. Since $%
R$ is neither a right $V$--ring nor a right $SI$--ring, we get the desired
result by Theorem~\ref{th1}.\bigskip
\end{proof}

We see, in \cite{AAL} and \cite{Er}, that the ring $S=\left( 
\begin{array}{cc}
F & 0 \\ 
F & F%
\end{array}%
\right) ,$where $F$ is a field, is of a particular interest. In \cite{AAL},
it is shown that $S$ has no simple middle class. In \cite{Er}, Er et al.
proved that $S$ has, indeed, no right middle class. It is also proved, in 
\cite{Er}, that a $QF$--ring $R$ with $J(R)^{2}=0$ and homogeneous right
socle has no right middle class. In the following theorem, we give a more
general result by replacing $QF$ with Artinian serial. Note that the class
of Artinian serial rings contains that of both $QF$--rings of above type and
rings in the form of $S.$

\begin{theorem}
\label{artinian serial}If $R$ is an Artinian serial ring with $J(R)^{2}=0$
and homogeneous right socle, then $R$ has no (simple) middle class.
\end{theorem}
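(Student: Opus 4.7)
My plan is to classify indecomposable right $R$-modules and show directly that every right $R$-module is either injective or poor. As $R$ is Artinian serial with $J^{2}=0$, the standard structure theorem for serial rings gives that every right $R$-module decomposes as a direct sum of uniserial indecomposables, each of composition length at most $2$. Let $S$ denote the common isomorphism type of the simple summands of $Soc(R_{R})$ provided by the homogeneity hypothesis.

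The central step will be to prove that $E(S)$ has length exactly $2$ (the semisimple case being trivial). I would use that $E(S)$ is uniserial (from the serial hypothesis), with $Soc(E(S)) = ann_{E(S)}(J) = S$ (since $R$ is semilocal and $S$ is essential in $E(S)$), and that $J^{2}=0$ yields $E(S)\cdot J \subseteq ann_{E(S)}(J)=S$. A preliminary remark is that homogeneity forces $R$ to be indecomposable as a ring: a nontrivial decomposition $R = A\times B$ would place simple summands of the socle in both factors, and a simple $R$-module supported in $A$ is annihilated by $B$ (and conversely), so such simples can never be isomorphic. Nonsemisimplicity of $R$ then produces a length-$2$ indecomposable projective, which is a proper essential extension of $S$, so $S$ is not injective. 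Hence $E(S)\neq S$, which forces $E(S)J=S$; consequently $E(S)/S$ is simple and $E(S)$ has length $2$.

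With this in hand, the classification of indecomposables is immediate: any length-$2$ indecomposable has socle $\cong S$ by homogeneity, so it embeds essentially into $E(S)$, and by length reasons equals $E(S)$. Thus $E(S)$ is the unique length-$2$ indecomposable, and it is injective. Any simple $T\not\cong S$ admits no length-$2$ essential extension (such an extension would have socle $\cong S$, not $\cong T$), so $T=E(T)$ is injective. The only noninjective indecomposable is $S$ itself.

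Finally, writing any right module as $M = S^{(\Lambda)}\oplus N$ with $N$ a direct sum of injective indecomposables, $N$ is itself injective (direct sums of injectives over the Noetherian ring $R$ are injective). If $\Lambda=\emptyset$, then $M$ is injective. Otherwise, the injectivity domain of $M$ coincides with that of $S$, so it suffices to show that $S$ is poor. For any nonsemisimple right $R$-module $L$, $L$ has an indecomposable summand of length $2$, necessarily $\cong E(S)$; the inclusion $S\hookrightarrow E(S)$ cannot be extended to any $E(S)\to S$, since such an extension would be surjective with kernel the unique simple submodule $Soc(E(S))=S$, contradicting that it restricts to the identity on $S$. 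Hence $S$ is not $L$-injective, so $S$ (and therefore $M$) is poor. Consequently $R$ has no right middle class, and this implies no right simple middle class as well. The main obstacle is the length-$2$ bound on $E(S)$, which requires simultaneously exploiting seriality (for uniseriality of injective hulls), semilocality (for $Soc = ann(J)$), the hypothesis $J^{2}=0$ (for $E(S)J\subseteq S$), and the ring-indecomposability of $R$ that comes from homogeneity of the socle (to produce a length-$2$ projective).
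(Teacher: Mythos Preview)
Your proof is correct and reaches the same classification of indecomposables as the paper, but from the opposite side. The paper works from the projective end: it decomposes $R_{R}=\bigoplus e_{i}R$ into uniserial projectives, invokes \cite[13.5]{Extending Modules} to see that each nonsimple $e_{i}R$ is injective (hence of length two, since $J^{2}=0$), and then uses homogeneity of the socle to conclude that all nonsimple $e_{i}R$ are isomorphic. You instead work from the injective end, bounding the length of $E(S)$ directly from $E(S)J\subseteq ann_{E(S)}(J)=S$ together with uniseriality, thereby avoiding the external injectivity citation. The final steps also differ: the paper decomposes only the cyclic test module $N$ and shows that $M$ being $N$--injective for a nonsemisimple $N$ already forces $M$ to be $R$--injective, whereas you decompose $M$ itself as $S^{(\Lambda)}\oplus N$ with $N$ injective and reduce everything to the poorness of $S$; this uses the Nakayama decomposition for arbitrary modules rather than just for cyclic ones. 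Your route is more self-contained, the paper's slightly shorter. One place that could use an extra sentence is the claim that any length-two indecomposable has socle $\cong S$ ``by homogeneity'': such a module is cyclic, say $mR$, and $Soc(mR)=mJ$ is a nonzero homomorphic image of $J_{R}\subseteq Soc(R_{R})$, hence isomorphic to $S$.
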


\begin{proof}
Since $R$ is Artinian serial, we can write $R=\oplus _{i=1}^{n}e_{i}R$,
where $e_{i}$'s are local idempotents and $e_{i}R$'s are uniserial. Suppose $%
e_{k}R$ is not simple for some $k=1,\ldots ,n.$ Since $e_{k}J(R)$ is the
unique maximal submodule of $e_{k}R,$ $%
e_{k}J(R)=l.ann_{e_{k}R}(J(R))=Soc(e_{k}R).$ Moreover, $e_{k}R$ is an
injective $R$--module by \cite[13.5, p.124]{Extending Modules}. It follows
that, for each $i=1,\ldots ,n,$ $e_{i}R$ is either a simple module or an
injective local module of length two. Now let $e_{t}R$ and $e_{t^{\prime }}R$
be nonsimple. By homogeneity of the right socle, we have $Soc(e_{t}R)\cong
Soc(e_{t^{\prime }}R).$ Then the injectivity of $e_{t^{\prime }}R$ yields an
isomorphism between $e_{t}R$ and $e_{t^{\prime }}R.$ Thus the nonsimple $%
e_{i}R$'s are all isomorphic to each other.

Now let $M$ be a (simple) module. Assume $M$ is $N$--injective, where $N$ is
cyclic. Since $R$ is an Artinian serial ring, by \cite[Theorem 5.6]{Facchini}%
, $N=\oplus _{k=1}^{r}N_{k}$, where $N_{k}$'s are cyclic uniserial. If $N$
is not semisimple, then there exists $t$ such that $N_{t}$ is not simple.
Since $N_{t}$ is cyclic and local, $N_{t}\cong e_{j}R$ for some $j=1,\ldots
,n$. This gives that $M$ is $e_{j}R$--injective. Also, $M$ is injective
relative to any $e_{i}R$ which is simple. It follows that $M$ is $R$%
--injective, i.e., it is injective. This completes the proof.
\end{proof}

\begin{corollary}
Let $R$ be an indecomposable Artinian serial ring. Then $R$ has no right
middle class if and only if $J(R)^{2}=0$ and $R$ has homogeneous right socle.
\end{corollary}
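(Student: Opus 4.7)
My approach is as follows. The $(\Leftarrow)$ direction is an immediate consequence of Theorem \ref{artinian serial}. For $(\Rightarrow)$, suppose $R$ is an indecomposable Artinian serial ring with no right middle class; if $R$ is semisimple then indecomposability forces $R$ to be simple Artinian, so trivially $J(R)=0$ and $Soc(R_R)=R$ is homogeneous. Assuming henceforth that $R$ is not semisimple, I plan to split the argument according to whether $R$ is right $SI$ or not, and verify in each case both $J(R)^{2}=0$ and homogeneity of the right socle.

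In the right $SI$ case, \cite[Theorem 2]{Er} yields that $R$ has homogeneous (essential) right socle, so the socle condition is granted. To obtain $J(R)^{2}=0$, I plan to show that $R/Soc(R_{R})$ is semisimple. Since $R$ is right Artinian, $Soc(R_{R})$ is essential in $R_{R}$, so $R/Soc(R_{R})$ is a singular right $R$-module. By the $SI$ hypothesis every simple singular right $R$-module is injective, so each simple submodule of $R/Soc(R_{R})$ is injective; hence the socle of $R/Soc(R_{R})$, being a finite direct sum of injective simples, is itself injective. Being also essential in the Artinian module $R/Soc(R_{R})$, this socle must coincide with the ambient module, giving that $R/Soc(R_{R})$ is semisimple. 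Therefore $J(R)\subseteq Soc(R_{R})$, and since $J(R)$ annihilates the semisimple right $R$-module $Soc(R_{R})$ on the right, $J(R)^{2}\subseteq Soc(R_{R})\cdot J(R)=0$.

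In the case that $R$ is not right $SI$, Theorem \ref{non-SI} applied to the indecomposable non-semisimple ring $R$ gives $R\cong \mathbb{M}_{n}(A)$, where $A$ is a local Artinian ring whose Jacobson radical $J(A)$ properly contains no nonzero ideals. Since $A$ is Artinian, $J(A)$ is nilpotent, so Nakayama gives $J(A)^{2}\subsetneq J(A)$; combined with the ideal hypothesis on $J(A)$, this forces $J(A)^{2}=0$, hence $J(R)^{2}=\mathbb{M}_{n}(J(A)^{2})=0$. Furthermore, $A$ being local admits a unique simple right module up to isomorphism, and Morita equivalence transfers this property to $R$, so $Soc(R_{R})$ is homogeneous.

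The hard part will be the $SI$ case---specifically the claim that $R/Soc(R_{R})$ is semisimple purely from the $SI$ and Artinian hypotheses. This fuses the $SI$ property with the Artinian structure through the standard fact that an essential injective submodule of any module coincides with the module itself; once this step is in hand, the non-$SI$ half of the argument is essentially bookkeeping on top of Theorem \ref{non-SI}.
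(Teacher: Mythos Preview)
Your argument is correct. The paper states this corollary without proof, so there is no line-by-line comparison to make; your route via the $SI$/non-$SI$ dichotomy, invoking \cite[Theorem~2]{Er} in the former case and Theorem~\ref{non-SI} in the latter, is exactly the kind of derivation the authors have in mind.

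Two remarks are worth making. First, in the $SI$ case you should say one word about why the PCI-domain alternative in \cite[Theorem~2]{Er} does not arise: since $R$ is Artinian and non-semisimple, a Morita-equivalent PCI-domain would be an Artinian domain, hence a division ring, forcing $R$ to be simple Artinian --- a contradiction. Once that is noted, homogeneity of the socle is indeed immediate. Second, and more interestingly, observe that your $(\Rightarrow)$ argument never uses the serial hypothesis: you have actually shown that \emph{any} indecomposable non-semisimple right Artinian ring with no right middle class satisfies $J(R)^{2}=0$ and has homogeneous right socle. This is consistent with (and recoverable from) Theorems~1 and~2 quoted in the introduction, but it is worth flagging that the serial assumption is needed only for the converse via Theorem~\ref{artinian serial}.

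Finally, the step you call the ``hard part'' --- that $R/Soc(R_{R})$ is semisimple for a right Artinian right $SI$-ring --- is a standard fact the paper itself uses without comment in the proof of Theorem~\ref{thmf}; your argument via injectivity of the essential socle is fine, but you could equally well just cite it.
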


Theorem \ref{artinian serial} shows that, for a nonsemisimple Artinian
serial ring $R$ with $J(R)^{2}=0$ and homogeneous right socle, $R$ has no
right middle class if and only if $R$ has no simple middle class. However,
one can find an Artinian serial ring with homogeneous right socle and no
simple middle class which has right middle class, as the following example
illustrates.

\begin{example}
\textrm{$Let\ R=\mathbb{Z}/8\mathbb{Z.}$ Then }$R$\textrm{\ is an Artinian
chain ring with no simple middle class by Corollary \ref{simple-destitute}.
However, $R$ has right middle class since $J^{2}(R)\neq 0.$}
\end{example}

\section{Commutative Rings}

In this section, we focus on commutative rings and investigate the property
of having no (simple) middle class. We see that commutative rings with no
middle class are precisely those Artinian rings which decompose into a sum
of a semisimple ring and a ring of composition length two. We start with the
following lemma.

\begin{lemma}
\label{lemma Goodearl}\cite[Exercise 17, Ch. 1, Sec. B]{G2} Let $R$ be a
commutative Noetherian ring, and let $P,$ $Q$ be prime ideals of $R.$ Then $%
P\subseteq Q$ if and only if $Hom_{R}(E(R/P),E(R/Q))\neq 0.$
\end{lemma}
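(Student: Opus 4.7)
The plan is to handle the two directions separately, with the forward direction being essentially a formal consequence of injectivity and the reverse direction requiring the structure of associated primes of injective hulls over a Noetherian ring.

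For the forward direction, assume $P \subseteq Q$. The canonical surjection $\pi \colon R/P \twoheadrightarrow R/Q$ composed with the inclusion $R/Q \hookrightarrow E(R/Q)$ yields a homomorphism $R/P \to E(R/Q)$. Since $E(R/Q)$ is injective and $R/P$ is a submodule of $E(R/P)$, this homomorphism extends to a homomorphism $g \colon E(R/P) \to E(R/Q)$. The restriction of $g$ to $R/P$ sends $1+P$ to $1+Q \neq 0$, so $g \neq 0$.

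For the reverse direction, suppose $f \colon E(R/P) \to E(R/Q)$ is nonzero; pick $x \in E(R/P)$ with $y := f(x) \neq 0$. The key step is to establish that for any nonzero $z \in E(R/P)$ one has $\sqrt{\mathrm{ann}_R(z)} = P$, and analogously $\sqrt{\mathrm{ann}_R(y)} = Q$. To see this, I would invoke the standard fact that over a commutative Noetherian ring $\mathrm{Ass}_R(E(R/P)) = \{P\}$: indeed, any associated prime of the essential extension $E(R/P)$ of $R/P$ arises from a cyclic submodule $R/P' \hookrightarrow E(R/P)$ whose intersection with the essential submodule $R/P$ is a nonzero $R/P'$-submodule of the domain $R/P$, forcing $P' = P$. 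Consequently the finitely generated submodule $Rz \cong R/\mathrm{ann}(z)$ has $\mathrm{Ass}(Rz) \subseteq \{P\}$, so $\mathrm{ann}(z)$ is $P$-primary and thus $\sqrt{\mathrm{ann}(z)} = P$.

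Once this is in hand, the inclusion $\mathrm{ann}_R(x) \subseteq \mathrm{ann}_R(f(x)) = \mathrm{ann}_R(y)$ that follows from $f$ being $R$-linear gives, upon taking radicals, $P = \sqrt{\mathrm{ann}(x)} \subseteq \sqrt{\mathrm{ann}(y)} = Q$, which is the desired conclusion. The main obstacle is the verification that $\mathrm{Ass}(E(R/P)) = \{P\}$; everything else is purely formal. This identification relies essentially on the Noetherian hypothesis (so that every nonzero submodule has an associated prime and the essential submodule $R/P$ is a domain whose only associated prime is $P$), which is why the statement is tethered to the Noetherian setting.
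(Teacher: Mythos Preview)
Your proof is correct and follows the standard argument for this classical fact. Note, however, that the paper does not supply its own proof of this lemma: it is stated with a citation to \cite[Exercise 17, Ch.~1, Sec.~B]{G2} and used as a black box in the proof of Proposition~\ref{Noether with no middle class -> Artin} and the final theorem. So there is no proof in the paper to compare against; your argument simply fills in the exercise that the authors quote.
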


\begin{proposition}
\label{Noether with no middle class -> Artin}If $R$ is a commutative
Noetherian ring with no middle class, then $R$ is Artinian.
\end{proposition}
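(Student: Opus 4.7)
The plan is to argue by contradiction. Assume $R$ is a commutative Noetherian ring with no middle class which fails to be Artinian. Then $R$ has Krull dimension at least one, so there exist prime ideals $P\subsetneq Q$ with $Q$ maximal. I shall derive a contradiction by combining the no middle class dichotomy applied to $R/P$ and $R/Q$ with two applications of Lemma~\ref{lemma Goodearl}.

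First, I would show that $R/P$ is poor. As a Noetherian domain that is not a field, $R/P$ fails to be self-injective: pick $a\in Q\setminus P$ so that $a+P$ is a nonzero non-unit of the domain $R/P$; since $R/P$ is a domain, $\mathrm{ann}_R(a)\subseteq P$, so the rule $ra\mapsto r+P$ defines an $R$-homomorphism $f\colon(a)\to R/P$, and $f$ fails to extend to $R\to R/P$ because any such extension would have to invert $a+P$ in $R/P$. Hence $R/P$ is not $R$-injective, and the no middle class hypothesis forces $R/P$ to be poor. Next, I would rule out that $R/Q$ is injective. Because $E(R/P)$ carries a natural $R_P$-module structure, every $q\in Q\setminus P$ acts as an $R$-linear automorphism of $E(R/P)$; consequently every $R$-homomorphism from $E(R/P)$ into a module annihilated by such a $q$ must vanish. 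In particular $\mathrm{Hom}_R(E(R/P),R/Q)=0$. If $R/Q$ were injective, then $E(R/Q)=R/Q$, whence $\mathrm{Hom}_R(E(R/P),E(R/Q))=0$, contradicting Lemma~\ref{lemma Goodearl}. Thus $R/Q$ is not injective, and the proper inclusion $R/Q\subsetneq E(R/Q)$ shows that $E(R/Q)$ is not semisimple.

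To finish, I exploit that $R/P$ is poor while $E(R/Q)$ is not semisimple: $R/P$ cannot be $E(R/Q)$-injective, so there exist a submodule $K\leq E(R/Q)$ and a nonzero homomorphism $h\colon K\to R/P$ that does not extend to $E(R/Q)\to R/P$. Composing $h$ with the natural inclusion $\iota\colon R/P\hookrightarrow E(R/P)$ and using injectivity of $E(R/P)$, I lift $\iota\circ h$ to an extension $H\colon E(R/Q)\to E(R/P)$. However, since $Q\not\subseteq P$, Lemma~\ref{lemma Goodearl} yields $\mathrm{Hom}_R(E(R/Q),E(R/P))=0$; hence $H=0$, forcing $\iota\circ h=0$ and therefore $h=0$, contradicting the choice of $h$. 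The main conceptual obstacle is ruling out that $R/Q$ is injective; once the $R_P$-module structure on $E(R/P)$ is invoked to kill $\mathrm{Hom}_R(E(R/P),R/Q)$, the remainder pairs Lemma~\ref{lemma Goodearl} (applied in the opposite direction) with the definition of a poor module to close the loop.
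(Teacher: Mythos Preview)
Your argument is correct, but it takes a somewhat different route from the paper's. The paper fixes a maximal ideal $P$ with $R/P$ non-injective (so $E(R/P)$ is not semisimple) and then, for any other prime $Q$, applies Lemma~\ref{lemma Goodearl} once in the direction $P\not\subseteq Q$ to get $\mathrm{Hom}_R(E(R/P),E(R/Q))=0$; this makes $R/Q$ trivially $E(R/P)$-injective, and the no-middle-class hypothesis forces $R/Q$ to be injective, hence a self-injective commutative domain and therefore a field, so $Q$ is maximal. In contrast, you start from a proper chain $P\subsetneq Q$ and apply the dichotomy to $R/P$ rather than to $R/Q$: this obliges you to show separately that $E(R/Q)$ is not semisimple, which you do via the $R_P$-module structure on $E(R/P)$ together with a second application of Lemma~\ref{lemma Goodearl} in the direction $P\subseteq Q$. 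Your approach avoids citing the fact that a self-injective commutative domain is a field, at the price of invoking the Matlis-theoretic localization structure on $E(R/P)$ and using Lemma~\ref{lemma Goodearl} in both directions. Both arguments ultimately rest on the same Hom-vanishing criterion; the paper's version is shorter because applying the dichotomy at the top of the chain (to $R/Q$) sidesteps the need to analyze $E(R/Q)$ directly.
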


\begin{proof}
Let $R$ be a commutative Noetherian ring with no middle class. We shall
complete the proof by showing that Krull dimension of $R$ is zero, i.e.,
every prime ideal of $R$ is maximal. If $R$ is a V--ring, then there is
nothing to prove. So, assume that $R$ is not a V--ring. Then there exists a
maximal ideal $P$ of $R$ such that $R/P$ is not injective. Hence $E(R/P)$ is
not semisimple. Let $Q$ be any prime ideal of $R$ such that $Q\neq P.$ Then
by above lemma, $Hom_{R}(E(R/P),E(R/Q))=0,$ and so $R/Q$ is injective
relative to $E(R/P).$ Since $R$ has no middle class and $E(R/P)$ is
nonsemisimple, $R/Q$ is injective. Thus $R/Q$ is a self--injective domain,
which implies that $Q$ is a maximal ideal. This completes the proof.
\end{proof}

\begin{theorem}
A commutative ring $R$ has no middle class if and only if there is a ring
decomposition $R=S\oplus T,$ where $S$ is a semisimple Artinian ring, and $T$
is zero or a local ring whose maximal ideal is minimal.
\end{theorem}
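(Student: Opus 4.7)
The plan is to invoke Theorem~3 from the introduction for necessity and Theorem~\ref{non-SI} for sufficiency, then use commutativity to eliminate or simplify each of the three cases appearing in Theorem~3. For the sufficiency, if $T=0$ then $R=S$ has no middle class trivially. Otherwise $T$ is local Artinian of composition length two (the ideal lattice being $0\subsetneq J(T)\subsetneq T$), so $J(T)$ properly contains no nonzero ideal. Theorem~\ref{non-SI} applied with $n=1$ and $A=T$ yields that $T$ has no middle class, and Lemma~\ref{middle class & ring direct sum} promotes this to $R=S\oplus T$.

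For the necessity, suppose $R$ is commutative with no middle class. Theorem~3 gives a decomposition $R\cong S\oplus T$ with $T$ zero or satisfying one of (i), (ii), (iii); we may assume $T\neq 0$, and $T$ is commutative as a summand of the commutative ring $R$. In case (i), $T$ is Morita equivalent to a $PCI$-domain $D$; since both the center and the property of being prime are Morita invariants, $D$ has center $T$ and $T$ itself is a commutative domain. A commutative $PCI$-domain must be a field --- a nonzero non-unit $a$ would yield an injective proper cyclic $T/aT$, and combined with the right Noetherianness forced by the $PCI$-hypothesis a routine analysis of the maximal spectrum gives a contradiction --- so $T$ absorbs into $S$, leaving $T=0$ in the final decomposition. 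In case (iii), Theorem~\ref{non-SI} further decomposes $T\cong S'\oplus\mathbb{M}_n(A)$ with $A$ local right Artinian and $J(A)$ properly containing no nonzero ideal; commutativity of $T$ forces $n=1$, so after absorbing $S'$ into $S$ we are left with a commutative local Artinian ring $A$ whose maximal ideal $J(A)$ properly contains no nonzero ideal --- either $A$ is a field (absorbable into $S$, leaving $T=0$) or $J(A)$ is a nonzero minimal ideal, the stated form.

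The main obstacle is case (ii), where $T$ is a commutative right $SI$ right $V$-ring with essential homogeneous right socle. Here I claim $T$ must be semisimple (hence absorbable into $S$). The key step is to show that $T/Soc(T_T)$ is singular: given $t\in T$ and any nonzero ideal $I$ of $T$, pick $0\neq r\in I$; if $tr=0$ then $r\in I\cap ann_T(\bar{t})$, and otherwise essentiality of $Soc(T_T)$ in $T$ furnishes $u\in T$ with $0\neq tru\in Soc(T_T)$, whence $0\neq ru\in I\cap ann_T(\bar{t})$. Thus $ann_T(\bar{t})\leq_e T$ for every $\bar{t}\in T/Soc(T_T)$, establishing singularity. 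By the $SI$ property $T/Soc(T_T)$ is then injective, so the canonical surjection $T\rightarrow T/Soc(T_T)$ splits; essentiality of $Soc(T_T)$ forces the complementary summand to vanish, giving $T=Soc(T_T)$ semisimple, again absorbable into $S$. In every case $R$ decomposes in the form asserted by the theorem.
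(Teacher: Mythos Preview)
Your treatment of case (ii) contains a genuine error. You correctly show that $T/Soc(T_T)$ is singular and hence, by the $SI$ hypothesis, injective. But injectivity of the \emph{quotient} in the short exact sequence $0\to Soc(T_T)\to T\to T/Soc(T_T)\to 0$ does not make the sequence split; that would require injectivity of $Soc(T_T)$ or projectivity of $T/Soc(T_T)$, neither of which you have. So the step ``the canonical surjection $T\to T/Soc(T_T)$ splits'' is unjustified, and the conclusion $T=Soc(T_T)$ does not follow from your argument as written.

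The paper avoids this by splitting case (ii) according to whether $T$ is Artinian or a $V$--ring (the dichotomy built into \cite[Theorem~2]{Er}). In the Artinian subcase $T$ is commutative local by indecomposability; a minimal nonzero ideal $A\cong T/J(T)$ is singular (since $J(T)\leq_e T$), hence injective by $SI$, hence a summand---contradicting indecomposability unless $T$ is a field. In the $V$--ring subcase one reduces via Proposition~\ref{Noether with no middle class -> Artin} and \cite[Lemma~5]{Er} to $T$ semiartinian; a minimal ideal together with a maximal ideal missing it then yield a nontrivial ring decomposition unless $T$ is a field. Your route can be repaired---for instance, a commutative $V$--ring is von Neumann regular, so a simple ideal is generated by an idempotent $e$, and homogeneity of the essential socle forces $(1-e)T=0$---but the splitting step as stated fails. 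Two smaller gaps: in case~(i) the claim that a commutative $PCI$--domain is a field is only gestured at (one clean line: commutative $V$--domain $\Rightarrow$ von Neumann regular domain $\Rightarrow$ field), and in case~(iii) your appeal to Theorem~\ref{non-SI} requires $T$ not right $SI$, which you have not verified.
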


\begin{proof}
Suppose first that $R$ has no middle class. Then, by \cite[Theorem 2]{Er},
there is a ring decomposition $R=S\oplus T$ where $S$ is semisimple Artinian
and $T$ is either zero or fits in one of the following three cases:

Case I : $T$ is Morita equivalent to a right PCI domain $T^{\prime }$. In
this case, since $T^{\prime }$ is right Noetherian, so is $T$. Then by
Proposition \ref{Noether with no middle class -> Artin}, $T$ is Artinian.
Thus $T^{\prime }$ is an Artinian domain, and hence a simple ring. This
gives that $T$ is also a simple ring. Since $T$ is commutative, it is a
field.

Case II : $T$ is an indecomposable SI--ring which is either Artinian or a
V--ring. Assume first that $T$ is Artinian. Then $T$ is a finite product of
local rings. Thus indecomposability gives that $T$ is a commutative local
Artinian ring. Suppose that $T$ is not a field. Then there is a minimal
nonzero ideal $A$ of $T.$ Notice that $A\cong T/J(T).$ But since $T$ is an
SI--ring and $T/J(T)$ is singular as a $T$--module, $A$ is injective. Then $%
A $ is a direct summand of $T,$ which contradicts the indecomposability of $%
T.$ Therefore $T$ is a field. Now let $T$ be a V--ring. We may assume that $%
T $ is not Noetherian. Then by \cite[Lemma 5]{Er}, $T$ is semiartinian. This
gives that $soc(T)\neq 0.$ Let $A$ be a nonzero minimal ideal of $T.$ Since $%
J(T)=0,$ there exists a maximal ideal $\mathfrak{M}$ which does not contain $%
A.$ Then $A\oplus \mathfrak{M}=T.$ It follows that $A=T$ and hence $T$ is a
field.

Case III : $T$ is an indecomposable Artinian ring with $soc(T)=J(T).$ Note
that, just as with Case II, $R$ is a local ring. It, therefore, follows from 
\cite[Corollary 2.14]{lopez-simental} that $T$ is a ring whose maximal ideal 
$J(T)$ is minimal.

Conversely, if $T$ is a commutative local ring whose maximal ideal is
simple, then, clearly, $T$ has a unique (up to isomorphism) local module of
length two (which is, indeed, $T$ itself), and has homogeneous $soc(T)=J(T).$
Thus, by \cite[Proposition 7]{Er}, $T$ has no middle class. Now the result
follows from Lemma \ref{middle class & ring direct sum}.\bigskip
\end{proof}

We give the following immediate consequences of the above theorem.

\begin{corollary}
Any commutative ring with no middle class is Artinian.
\end{corollary}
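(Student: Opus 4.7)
The plan is to read the statement directly off the immediately preceding theorem. That theorem decomposes any commutative ring $R$ with no middle class as $R \cong S \oplus T$, where $S$ is semisimple Artinian and $T$ is either zero or a commutative local ring whose maximal ideal is minimal. So first I would invoke that decomposition; then in the nontrivial case the maximal ideal $\mathfrak{m}$ of $T$ is simple (being both maximal and minimal), while $T/\mathfrak{m}$ is a field, so the chain $0 \subset \mathfrak{m} \subset T$ is a composition series of length $2$. In particular $T$ has finite length and is therefore Artinian.

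Once this is in place, the conclusion follows from the closure of the class of Artinian rings under finite direct sums: since $S$ is semisimple Artinian and $T$ is either zero or of composition length at most $2$, the ring $R = S \oplus T$ is Artinian. There is no real obstacle here; all the heavy lifting has already been done in the preceding theorem, and the only remaining observation is the elementary fact that a commutative local ring whose maximal ideal is both maximal and minimal has composition length at most two. The edge case $T = 0$ is trivial, because then $R = S$ is already semisimple Artinian.
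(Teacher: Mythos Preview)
Your argument is correct and is exactly the intended one: the paper presents this corollary as an immediate consequence of the preceding structure theorem, and your observation that a local ring whose maximal ideal is minimal has composition length two is precisely what makes the deduction work.
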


\begin{corollary}
A commutative ring $R$ is a local ring whose (unique) maximal ideal is
minimal if and only if

(i) $R$ is indecomposable Artinian,

(ii) $soc(R)=J(R),$ and

(iii) $R$ has no middle class.
\end{corollary}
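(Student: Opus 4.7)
The plan is to derive both implications from the preceding theorem, which characterizes commutative rings with no middle class as $R\cong S\oplus T$ where $S$ is semisimple Artinian and $T$ is zero or local with minimal maximal ideal. In both directions the corollary is essentially a bookkeeping exercise on top of that theorem.

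For the forward implication, suppose $R$ is a commutative local ring whose maximal ideal $\mathfrak{m}$ is minimal. Every local ring is indecomposable, and the chain $0\subsetneq \mathfrak{m}\subsetneq R$ is a composition series of $R$ as a module over itself, so $R$ is Artinian; this gives (i). For (ii), locality yields $J(R)=\mathfrak{m}$, while the minimality of $\mathfrak{m}$ makes it the unique nonzero minimal ideal, hence $soc(R)=\mathfrak{m}=J(R)$. Finally, (iii) follows by applying the preceding theorem with $S=0$ and $T=R$.

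For the converse, assume (i)--(iii). By (iii) and the preceding theorem, we may write $R\cong S\oplus T$ with $S$ semisimple Artinian and $T$ either zero or a local ring whose maximal ideal is minimal. The indecomposability in (i) forces one of $S,T$ to vanish. If $T=0$, then $R\cong S$ is a nonzero semisimple Artinian ring (nonzero because $R$ has an identity), so $soc(R)=R$ while $J(R)=0$, contradicting (ii). Therefore $T\neq 0$, whence $S=0$ and $R\cong T$ is a local ring whose maximal ideal is minimal.

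No serious obstacle is expected; the only subtle point is using (ii) to eliminate the semisimple branch of the decomposition that the preceding theorem supplies in the converse direction.
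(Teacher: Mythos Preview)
Your proof is correct and follows exactly the approach the paper intends: the corollary is stated as an immediate consequence of the preceding theorem, and you have spelled out both directions cleanly using that theorem together with the elementary observations about composition length, $soc(R)$, and $J(R)$.
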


Now we turn our attention to commutative Noetherian rings with no simple
middle class although they need not be Artinian as the following lemma shows.

\begin{lemma}
A commutative local ring (not necessarily Noetherian) has no simple middle
class.
\end{lemma}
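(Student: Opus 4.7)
Let $(R,\mathfrak{m})$ be a commutative local ring. Since $R$ is local, every simple $R$-module is isomorphic to $R/\mathfrak{m}$, so to show $R$ has no simple middle class it suffices to prove that the single simple module $R/\mathfrak{m}$ is either injective or poor. The plan is to assume $R/\mathfrak{m}$ is not injective and prove directly that it is poor, that is, that any module $N$ with $R/\mathfrak{m}$ being $N$-injective must be semisimple. Since a module annihilated by $\mathfrak{m}$ is an $R/\mathfrak{m}$-vector space and hence semisimple as an $R$-module, this reduces to showing $N\mathfrak{m}=0$.

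The core of the argument is to exploit the assumption locally at a single cyclic submodule. Suppose for contradiction that some $x\in N$ satisfies $x\mathfrak{m}\neq 0$, and set $I=\mathrm{ann}(x)$, so $I\subsetneq\mathfrak{m}$ and $xR\cong R/I$. Since relative injectivity passes to submodules, $R/\mathfrak{m}$ is $R/I$-injective. Pick $s\in\mathfrak{m}\setminus I$ and consider the cyclic submodule $C=(sR+I)/I\leq R/I$. I would define $\phi\colon C\to R/\mathfrak{m}$ by $s+I\mapsto 1+\mathfrak{m}$; the crucial well-definedness check uses locality: if $sr\in I$ then $r\in\mathfrak{m}$, for otherwise $r$ would be a unit and $s=(sr)r^{-1}\in I$, contrary to the choice of $s$.

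It then remains to verify that $\phi$ does not extend to any $R/I\to R/\mathfrak{m}$, which contradicts $R/\mathfrak{m}$ being $R/I$-injective. Here one uses the easy identification $\mathrm{Hom}_R(R/I,R/\mathfrak{m})\cong R/\mathfrak{m}$ (every map has the form $r+I\mapsto cr+\mathfrak{m}$ for some fixed $c$); evaluating such a map at $s+I$ yields $cs+\mathfrak{m}=0$ because $s\in\mathfrak{m}$, while $\phi(s+I)=1+\mathfrak{m}\neq 0$. This forces $\mathrm{ann}(x)=\mathfrak{m}$ for every nonzero $x\in N$, hence $N\mathfrak{m}=0$ and $N$ is semisimple, completing the argument. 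The only genuinely delicate point is the well-definedness of $\phi$, which is where locality of $R$ is essential; everything else is a direct calculation, so I do not anticipate a real obstacle.
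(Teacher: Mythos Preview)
Your proof is correct. Both your argument and the paper's reduce to showing that if $R/\mathfrak{m}$ is $(R/I)$-injective for a proper ideal $I$, then $I=\mathfrak{m}$; you carry this out by hand, constructing an explicit non-extendable map out of $(sR+I)/I$, while the paper packages the same conclusion more tersely: since $R/I$ is again local with unique simple module $R/\mathfrak{m}$, the relative injectivity hypothesis makes $R/I$ a $V$-ring, whence $J(R/I)=\mathfrak{m}/I=0$ and $I=\mathfrak{m}$. Your explicit construction is precisely what underlies that $V$-ring step, so the two proofs are the same in spirit; the paper's phrasing is shorter, while yours is fully self-contained and makes the role of locality (non-units lie in $\mathfrak{m}$) completely transparent. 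Incidentally, your opening assumption that $R/\mathfrak{m}$ is not injective is never used, so you have in fact shown that $R/\mathfrak{m}$ is always poor.
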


\begin{proof}
Let $R$ be a commutative local ring with the unique maximal ideal $\mathfrak{%
M.}$ Let $R/\mathfrak{M}$ be $(R/I)$--injective for some proper ideal $I$ of 
$R.$ Then $R/I$ is a $V$--ring since its unique simple module is injective.
This gives that $I=\mathfrak{M},$ and so $R/\mathfrak{M}$ is a poor module.
This completes the proof.
\end{proof}

\begin{theorem}
Let $R$ be a commutative Noetherian ring. Then $R$ has no simple middle
class if and only if there is a ring decomposition $R=S\oplus T$ where $S$
is semisimple Artinian and $T$ is a local ring.
\end{theorem}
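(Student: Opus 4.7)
The plan is to handle the two directions separately, with the sufficiency following almost immediately from the preceding lemma (any commutative local ring has no simple middle class) together with Lemma \ref{middle class & ring direct sum}, which shows that direct-summing with a semisimple Artinian ring preserves the ``no (simple) middle class'' property. The real work lies in the necessity direction, which I would split into cases according to whether $R$ is a $V$--ring. If $R$ is a $V$--ring, then every simple module is injective, so $R$ is von Neumann regular; combined with the Noetherian hypothesis this forces $R$ to be semisimple Artinian, i.e., a finite product of fields, and one field can be pulled off as the local $T$ while the rest make up the semisimple $S$.

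The substantive case is when $R$ is not a $V$--ring. First I would pick a maximal ideal $\mathfrak{M}$ with $R/\mathfrak{M}$ non-injective; the hypothesis then forces $R/\mathfrak{M}$ to be poor. The key claim to establish is that, for \emph{every other} maximal ideal $\mathfrak{M}'$, the simple quotient $R/\mathfrak{M}'$ is injective. To prove this, I would invoke Lemma \ref{lemma Goodearl}: since $\mathfrak{M}' \not\subseteq \mathfrak{M}$, one has $Hom_R(E(R/\mathfrak{M}'), E(R/\mathfrak{M})) = 0$. Any homomorphism from a submodule of $E(R/\mathfrak{M}')$ into $R/\mathfrak{M} \subseteq E(R/\mathfrak{M})$ therefore lifts, by injectivity of $E(R/\mathfrak{M})$, to a map $E(R/\mathfrak{M}') \to E(R/\mathfrak{M})$, which must vanish; so the original map is zero and trivially extendable. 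Hence $R/\mathfrak{M}$ is $E(R/\mathfrak{M}')$--injective, and poorness forces $E(R/\mathfrak{M}')$ to be semisimple. Since $R/\mathfrak{M}'$ is essential in its hull, this gives $R/\mathfrak{M}' = E(R/\mathfrak{M}')$, establishing injectivity.

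With every non-$\mathfrak{M}$ simple quotient injective, each sequence $0\to \mathfrak{M}'\to R\to R/\mathfrak{M}'\to 0$ splits, so each such $\mathfrak{M}'$ is generated by a central idempotent. Noetherianness bounds the number of orthogonal idempotents, so $R$ admits a finite ring decomposition $R = R_1\times \cdots \times R_n$ into indecomposable factors; the field factors account for all maximal ideals other than $\mathfrak{M}$, while $\mathfrak{M}$ must come from some non-field factor $R_{i_0}$. Running the same injectivity/idempotent argument on any hypothetical second maximal ideal of $R_{i_0}$ would produce a nontrivial idempotent inside $R_{i_0}$, contradicting its indecomposability, so $R_{i_0}$ is local; taking $T = R_{i_0}$ and $S = \prod_{j\neq i_0} R_j$ completes the argument. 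The main obstacle is the injectivity step in the second paragraph---promoting the poorness of $R/\mathfrak{M}$ to outright injectivity of every other simple module via Lemma \ref{lemma Goodearl}---after which the structural conclusion is essentially bookkeeping with idempotents in a commutative Noetherian ring.
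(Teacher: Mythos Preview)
Your argument is correct and tracks the paper's proof through the key step: both use Lemma~\ref{lemma Goodearl} to show that $R/\mathfrak{M}$ is $E(R/\mathfrak{M}')$--injective whenever $\mathfrak{M}'\neq\mathfrak{M}$, whence poorness of $R/\mathfrak{M}$ forces $R/\mathfrak{M}'=E(R/\mathfrak{M}')$ to be injective. One small slip to patch: injectivity of the \emph{cokernel} $R/\mathfrak{M}'$ does not by itself split $0\to\mathfrak{M}'\to R\to R/\mathfrak{M}'\to 0$---you need an embedded copy of $R/\mathfrak{M}'$ in $R$. This is available once you observe (as the paper does, citing \cite[Theorem~3.71]{Lam}) that $R_{\mathfrak{M}'}$ is a field, so some $s\notin\mathfrak{M}'$ kills $\mathfrak{M}'$, and then $sR\cong R/\mathfrak{M}'$ is an injective simple summand of $R$; with that, your idempotent bookkeeping goes through.

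After the Goodearl step the paper takes a different route to the decomposition. It applies Lemma~\ref{lemma Goodearl} to \emph{all} primes $\mathfrak{P}\not\subseteq\mathfrak{M}$ (not just maximal ones), deducing that each such $\mathfrak{P}$ is both maximal and minimal with $\mathfrak{P}^k=\mathfrak{P}$; it then invokes the primary decomposition $0=\mathfrak{P}_1^{t_1}\cap\cdots\cap\mathfrak{P}_n^{t_n}$ of the zero ideal and splits $R$ explicitly as $(\mathfrak{P}_1\cap\cdots\cap\mathfrak{P}_s)\oplus(\mathfrak{P}_{s+1}^{t_{s+1}}\cap\cdots\cap\mathfrak{P}_n^{t_n})$, with the first factor local and the second semisimple. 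Your idempotent/indecomposable--factor argument is a bit more elementary and sidesteps primary decomposition, while the paper's version yields the decomposition more explicitly in terms of the prime spectrum; either way the substance is the same.
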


\begin{proof}
The sufficiency follows easily from Lemma \ref{middle class & ring direct
sum} together with the above lemma. For the necessity, let $R$ be a
commutative Noetherian ring with no simple middle class. Suppose $R$ is not
semisimple Artinian. Then $R$ is not a $V$--ring, and so there exists a
maximal ideal $\mathfrak{M}$ of $R$ such that $R/\mathfrak{M}$ is a poor $R$%
--module. Let $\mathfrak{P}$ be a prime ideal of $R$ with $\mathfrak{%
P\nsubseteq M.}$ Then $R/\mathfrak{M}$ is $E(R/\mathfrak{P)}$--injective by
Lemma \ref{lemma Goodearl}. Then $E(R/\mathfrak{P)}$ is semisimple, i.e., $%
E(R/\mathfrak{P)}=R/\mathfrak{P}$ and $\mathfrak{P}$ is a maximal ideal of $%
R.$ Since $R/\mathfrak{P}$ is injective, by \cite[Theorem 3.71]{Lam}, $R_{%
\mathfrak{P}}$ is a field. This, in particular, gives that $\mathfrak{P}$
contains no prime ideals properly, and that $\mathfrak{P}^{k}=\mathfrak{P}$
for every positive integer $k.$ Since $R$ is Noetherian, there exist minimal
prime ideals $\mathfrak{P}_{1},\ldots ,\mathfrak{P}_{n}$ of $R$ such that $%
\mathfrak{P}_{1}^{t_{1}}\cap \ldots \cap \mathfrak{P}_{n}^{t_{n}}=0.$ If $R$
is local, then we are done. So, suppose $R$ is not local. If $\mathfrak{P}%
_{i}$ is contained in $\mathfrak{M}$ for every $i=1,\ldots ,n,$ then for any
maximal ideal $\mathfrak{P}$ of $R,$ $\mathfrak{P}_{1}^{t_{1}}\cap \ldots
\cap \mathfrak{P}_{n}^{t_{n}}=0\subseteq \mathfrak{P}$ which yields $%
\mathfrak{P}_{j}\subseteq \mathfrak{P}$ for some $j.$ Then we must have, by
above arguments, $\mathfrak{P}_{j}=\mathfrak{P=M,}$ a contradiction. Thus we
may arrange the $\mathfrak{P}_{i}$'s in such a way that $\mathfrak{P}%
_{1},\ldots ,\mathfrak{P}_{s}$ are not contained in $\mathfrak{M}$ but $%
\mathfrak{P}_{s+1},\ldots ,\mathfrak{P}_{n}$ are, for some $s<n.$ It follows
that $\mathfrak{P}_{1},\ldots ,\mathfrak{P}_{s},\mathfrak{M}$ is the
complete list of all maximal ideals of $R,$ and that $\mathfrak{P}_{1}\cap
\ldots \cap \mathfrak{P}_{s}\cap \mathfrak{P}_{s+1}^{t_{s+1}}\cap \ldots
\cap \mathfrak{P}_{n}^{t_{n}}=0.$ It is also easy to see that $(\mathfrak{P}%
_{1}\cap \ldots \cap \mathfrak{P}_{s})\oplus (\mathfrak{P}%
_{s+1}^{t_{s+1}}\cap \ldots \cap \mathfrak{P}_{n}^{t_{n}})=R.$ Notice that $%
\mathfrak{P}_{s+1}^{t_{s+1}}\cap \ldots \cap \mathfrak{P}_{n}^{t_{n}}$ is a
semisimple Artinian ring isomorphic to $R/(\mathfrak{P}_{1}\cap \ldots \cap 
\mathfrak{P}_{s})$ whereas $\mathfrak{P}_{1}\cap \ldots \cap \mathfrak{P}_{s}
$ is a local ring isomorphic to $R/(\mathfrak{P}_{s+1}^{t_{s+1}}\cap \ldots
\cap \mathfrak{P}_{n}^{t_{n}}).$ This completes the proof.\medskip 
\end{proof}

\textbf{Acknowledgement:} The authors would like to express their gratitude
to Professor Sergio R. L\'{o}pez-Permouth and the anonymous referee for
their invaluable comments and suggestions which improved the presentation of
this work.

\end{document}